\pgfplotsset{width=7cm,compat=1.8}
\newtheorem{theorem}{Theorem}[section]
\newtheorem*{theorem*}{Theorem}
\newtheorem{definition}[theorem]{Definition}
\theoremstyle{plain}
\newtheorem{corollary}[theorem]{Corollary}
\newtheorem*{corollary*}{Corollary}
\newtheorem{lemma}[theorem]{Lemma}
\newtheorem{proposition}[theorem]{Proposition}
\newcounter{mt}
\newtheorem{MainTheorem}[mt]{Theorem}
\newtheorem{MainCorollary}[mt]{Corollary}
\newtheorem*{lemma*}{Lemma}
\newtheorem*{question*}{Question}
\theoremstyle{definition}
\newcommand{\tpitchfork}{%
  \vbox{
    \baselineskip\z@skip
    \lineskip-.52ex
    \lineskiplimit\maxdimen
    \m@th
    \ialign{##\crcr\hidewidth\smash{$-$}\hidewidth\crcr$\pitchfork$\crcr}
  }%
}
\newcommand{\CC}{\mathbb{C}}
\newcommand{\NN}{\mathbb{N}}
\newcommand{\RR}{\mathbb{R}}
\newcommand{\ZZ}{\mathbb{Z}}
\newcommand{\ddt}{\left.\frac{d^2}{dt^2}\right|_{t=0}}
\DeclareMathOperator{\GL}{GL}
\DeclareMathOperator{\SL}{SL}
\DeclareMathOperator{\OO}{O}
\DeclareMathOperator{\supp}{supp}
\DeclareMathOperator{\linspan}{span}
\DeclareMathOperator{\interior}{int}
\DeclareMathOperator{\relint}{relint}
\newcommand{\DD}{\mathbb{D}}
\newcommand{\unitsurf}{\mathbb{S}}
\newcommand{\IntBody}{\mathrm{I}}
\newcommand{\JOp}{\mathrm{J}}
\newcommand{\convexbodies}{\mathcal{K}}
\newcommand{\starbodiesO}{\mathcal{S}_0}
\newcommand{\convexbodiesO}{\mathcal{K}_{(0)}}
\newcommand{\Vol}[2]{V_{#1}\left(#2\right)}
\newcommand{\CompParSec}[2]{\mathrm{A}^{\CC}_{#1,#2}}
\newcommand{\RealParSec}[2]{\mathrm{A}^{\RR}_{#1,#2}}
\newcommand{\ComplSphRad}{\mathcal{R}_c}
\newcommand{\ComplPosReMom}[0]{\mathcal{M}_{p,v}^{\Re, +}}
\title[]{Complex $L_p$-Intersection Bodies}
\author{Simon Ellmeyer}
\author{Georg C. Hofst\"atter}
\email{simon.ellmeyer@tuwien.ac.at}
\email{georg.hofstaetter@uni-jena.de}
\address{Institute f. Discrete Mathematics and Geometry, TU Wien, 1040 Wien, Austria}
\address{Institute f. Mathematics, Friedrich-Schiller-University Jena, 07743 Jena, Germany}
\date{\today}
\begin{document}

\begin{abstract}
Interpolating between the classical notions of intersection and polar centroid bodies, (real) $L_p$-intersection bodies, for $-1<p<1$, play an important role in the dual $L_p$-Brunn--Minkowski theory. Inspired by the recent construction of complex centroid bodies, a complex version of $L_p$-intersection bodies, with range extended to $p>-2$, is introduced, interpolating between complex intersection and polar complex centroid bodies. It is shown that the complex $L_p$-intersection body of an $\unitsurf^1$-invariant convex body is pseudo-convex, if \linebreak $-2<p<-1$ and convex, if $p\geq-1$. Moreover, intersection inequalities of Busemann--Petty type in the sense of Adamczak--Paouris--Pivovarov--Simanjuntak are deduced.
\end{abstract}

\maketitle

\section{Introduction}

For a star body $K$ in $\RR^n$, that is a compact set with continuous, positive radial function, which is star-shaped around the origin, the intersection body $\IntBody K$ was defined by Lutwak~\cite{Lutwak1988} as the unique, origin-symmetric star body satisfying
\begin{align}\label{eq:defIntbody}
	V_1(\IntBody K \cap \linspan^\RR\{u\}) = V_{n-1}(K \cap u^\perp), \qquad u \in \unitsurf^{n-1},
\end{align}
where $V_i$ denotes the $i$-dimensional volume, $\linspan^\RR\{u\}$ is the linear span of $u$, $u^\perp$ is the linear hyperplane orthogonal to $u$, and $\unitsurf^{n-1}$ is the Euclidean unit sphere.

While intersection bodies played a key role in the solution of the famous Busemann--Petty problem (see e.g.~\cite{Gardner1999} for an elegant unified solution and a comprehensive list of references), the origin of intersection bodies dates back to the pioneering works of Busemann on volume and area defined in Finsler spaces. Formulated in different terms, Busemann established his important convexity theorem~\cite{Busemann1949}, stating that the intersection body of an origin-symmetric convex body is convex, as well as his famous intersection inequality for convex bodies~\cite{Busemann1953}, which was extended to star bodies by Petty~\cite{Petty1961} in the following way: If $K$ is a star body in $\RR^n$, then
\begin{align}\label{eq:BusIntersectIneq}
	V_n(\IntBody K)/V_n(K)^{n-1} \leq \kappa_{n-1}^n/\kappa_n^{n-2}, 
\end{align}
where $\kappa_i = V_i(B^i)$ is the volume of the $i$-dimensional Euclidean unit ball $B^i$, and equality holds exactly for origin-symmetric ellipsoids.

Throughout the years, intersection bodies have sparked a lot of interest in a wide range of fields (see, e.g., \cites{Ludwig2006, Zhang1994, Goodey1995b,Gardner1994, Milman2006, Rubin2004, Zhang1996, Koldobsky1997, Koldobsky1998, Koldobsky1998b, Koldobsky1999, Koldobsky2000} for an overview). In particular, they played a central role in the development of the dual Brunn--Minkowski theory due to Lutwak~\cite{Lutwak1988}, where their special role was revealed by characterizations of intersection bodies from a valuation-theoretic point of view by Ludwig~\cite{Ludwig2006}. In the emerging $L_p$-Brunn--Minkowskiy theory and its dual, the concept was extended to the $L_p$-intersection body $\IntBody_{p} K$, defined for $K \in \starbodiesO(\RR^n)$, the set of all star bodies in $\RR^n$, and non-zero $p>-1$ by
\begin{align}\label{eq:defLpIntersectBody}
	\rho_{\IntBody_p K}(u)^{-p}=\int_K |\langle x,u\rangle|^p dx, \quad u \in \RR^n \setminus \{0\},
\end{align}
where $\rho_K(u) = \sup\{\lambda \geq 0: \lambda u \in K\}$, $u \in \RR^n \setminus\{0\}$, denotes the radial function of $K$ and $\langle \cdot, \cdot \rangle$ is the standard Euclidean inner product. Note that, for $p \geq 1$, this definition coincides (up to normalization) with the polar of the $L_p$-centroid body (first defined in \cite{Lutwak1997}, see also \cite{Gardner1999c}). For $-1<p<1$, $L_p$-intersection bodies were studied in \cites{Haberl2006, Haberl2008,Yaskin2006, Berck2009}. Using well-known properties of the $p$-cosine transform, the $L_p$-intersection bodies relate to the classical intersection body by 
\begin{align}\label{eq:convRealLpIntBody}
	\lim_{p \to -1^+} \left(\frac{1}{\Gamma(1+p)}\right)^{-1/p} \IntBody_p K  = 2\cdot \IntBody K, \quad K \in \starbodiesO(\RR^n),
\end{align}
see, e.g., \cites{Gardner1999c, Haberl2008, Grinberg1999}, where convergence is with respect to the radial metric on $\starbodiesO(\RR^n)$, that is, uniform convergence of radial functions on $\unitsurf^{n-1}$. As a natural consequence, $L_p$-analogues of classical problems for intersection bodies were considered, leading to fruitful interactions and many new results, including a convexity theorem by Berck~\cite{Berck2009} and characterizations by Haberl and Ludwig~\cite{Haberl2006}. The intersection inequality~\eqref{eq:BusIntersectIneq} was generalized in \cite{Lutwak1997} for $p \geq 1$, leading to the discovery of an interpolating family of inequalities between the polar Busemann--Petty centroid inequality ($p=1$) and the famous Blaschke--Santal\'o inequality ($p=\infty$). Very recently, this family of inequalities was extended in \cite{Adamczak2022} to $0<p<1$ and to $-1<p<0$ with $n/|p| \in \NN$, preceded by local inequalities including equality cases around the unit ball proved in \cite{Yaskin2022}.

\medskip

A different generalization of intersection bodies was recently introduced by Koldobsky, Paouris and Zymonopoulou~\cite{Koldobsky2013}, based on a Busemann--Petty-type problem in complex vector spaces, first considered in~\cite{Koldobsky2008}. Here, intersections by real hyperplanes are replaced by intersections by complex hyperplanes $u^{\perp, \CC}$ perpendicular to $u \in \unitsurf^{2n-1}$ with respect to the complex inner product on $\CC^n$, leading in a natural way to the definition of a complex intersection body. More precisely, identifying $\starbodiesO(\CC^n)$ with $\starbodiesO(\RR^{2n})$ as $\CC^n\cong\RR^{2n}$, for an $\unitsurf^1$-invariant star body $K \in \starbodiesO(\CC^n)$, that is, satisfying $cK = K$ for all $c \in \unitsurf^1 \subseteq \CC$, the complex intersection body $\IntBody_c K$ is defined as the unique $\unitsurf^1$-invariant star body satisfying
\begin{align}\label{eq:defComplIntBody}
	V_2(\IntBody_c K \cap \linspan^\CC\{u\}) = V_{2n-2}(K \cap u^{\perp, \CC}), \quad u \in \unitsurf^{2n-1},
\end{align}
where $\linspan^\CC\{u\}$ denotes the complex line defined by $u$. Note that, by $\unitsurf^1$-invariance, $\IntBody_c K \cap \linspan^\CC\{u\}$ is always a disk and \eqref{eq:defComplIntBody} determines its radius. Moreover, it was shown in \cite{Koldobsky2013} that $\IntBody_c K$ is convex whenever $K$ is the unit ball of a complex norm on $\CC^n$, that is, when $K \in \starbodiesO(\CC^n)$ is convex and $\unitsurf^1$-invariant.

\medskip

In this article, we combine the complex structure with the $L_p$-approach to define complex $L_p$-intersection bodies. For this reason, we adapt a strategy used by Abardia and Bernig~\cite{Abardia2011}, Abardia~\cite{Abardia2012} and Haberl~\cite{Haberl2019} who introduced complex projection, difference and centroid bodies, respectively. In order to state our main definition, let $\convexbodies(\CC)$ denote the set of convex bodies in $\CC$, that is, all compact and convex subsets of $\CC \cong \RR^2$, and let $h_K(u) = \sup\{\langle x, u\rangle: x \in K\}$ be the support function of $K \in \convexbodies(\CC)$. Replacing the support function $|\langle \cdot, u\rangle|$ of the interval $[-1,1]u$ by the support function of a convex body $Cu$, $C \in \convexbodies(\CC)$, then leads to the following definition.

\begin{definition}\label{def:compLpIntersectBody}
	Suppose that $C \in \convexbodies(\CC)$ contains the origin in its relative interior, $\dim C>0$, and $0\neq p \in (-\dim C,1)$. For $K \in \starbodiesO(\CC^n)$, the \emph{complex $L_p$-intersection body} $\IntBody_{C,p} K$ is the star body with radial function
	\begin{align}\label{eq:defCompLpIntersectBody}
		\rho_{\IntBody_{C,p}K}(u)^{-p}=\int_K h_{Cu}(x)^p dx, \quad u \in \unitsurf^{2n-1},
	\end{align}
	where $Cu = \{cu: c \in C\} \subseteq \CC^n$.
\end{definition}
\noindent
Note that for $C = [-1,1]$ we recover the (real) $L_p$-intersection bodies defined in \eqref{eq:defLpIntersectBody} and for $p=1$, this equals the polar complex centroid body introduced by Haberl in \cite{Haberl2019}. For $\dim C = 2$ the range of admissible values for $p$ extends to $(-2,-1]$, see Section~\ref{sec:complLpIntBodies} for details and some basic properties of complex $L_p$-intersection bodies. Let us also point out that in \cite{Wang2021} complex $L_p$-centroid (moment) bodies were defined in a similar way for $p\geq 1$.

\medskip

As our first main result, we show that complex $L_p$-intersection bodies interpolate between the polar complex centroid body ($p=1$) and the complex intersection body ($p=-2$), that is, we prove a complex analogue of \eqref{eq:convRealLpIntBody}, and thereby justify the name. To state the theorem, denote by $\convexbodiesO(\CC)$ the set of convex bodies $K \in \convexbodies(\CC)$ that contain the origin in their interior. Notice that $C \in \convexbodiesO(\CC)$ implies $\dim (C) = 2$.
\begin{MainTheorem}\label{mthm:limLpIntBodiesComplex}
	Suppose that $C \in \convexbodiesO(\CC)$. Then there exists $k_{C} > 0$, such that
	\begin{align*}
		\lim\limits_{p \to -2^+}\left(\frac{1}{\Gamma(p+2)}\right)^{-1/p}\IntBody_{C,p}K = k_C \cdot \IntBody_c\left(K^{\unitsurf^1}\right), 
	\end{align*}
	for every $K \in \starbodiesO(\CC^n)$, where $K^{\unitsurf^1} \in \starbodiesO(\CC^n)$ is the star body with radial function 
	\begin{align*}
		\rho_{K^{\unitsurf^1}}^{2n-2}(u)=\frac{1}{2\pi}\int_{\unitsurf^1}\rho_K^{2n-2}(cu)dc, \quad u \in \unitsurf^{2n-1}.
	\end{align*}
\end{MainTheorem}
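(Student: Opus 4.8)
The plan is to reduce the asserted convergence in the radial metric, i.e.\ uniform convergence of radial functions on $\unitsurf^{2n-1}$, to an asymptotic analysis of $u\mapsto\rho_{\IntBody_{C,p}K}(u)^{-p}$ as $p\to-2^+$. First one rewrites the defining integral. Since $cu = \real(c)\,u + \imag(c)\,(iu)$, positive homogeneity of support functions gives $h_{Cu}(x) = h_C(\pi_u(x))$, where $\pi_u\colon\CC^n\to\linspan^\CC\{u\}\cong\CC$ is the orthogonal projection onto the complex line through $u$. Splitting $\CC^n = \linspan^\CC\{u\}\oplus u^{\perp,\CC}$ and integrating first over the fibres, Fubini yields
\begin{equation*}
	\rho_{\IntBody_{C,p}K}(u)^{-p} = \int_{\CC} h_C(z)^p\,A_K(u,z)\,dz,\qquad A_K(u,z):=V_{2n-2}\bigl(K\cap(zu+u^{\perp,\CC})\bigr),
\end{equation*}
where, for fixed $u$, the parallel section function $z\mapsto A_K(u,z)$ is continuous, nonnegative and supported in $\{|z|\le R_0\}$ with $R_0:=\max_{\unitsurf^{2n-1}}\rho_K$ independent of $u$, and $(u,z)\mapsto A_K(u,z)$ is jointly continuous (the standard continuity of parallel section functions of star bodies; alternatively one may start from the spherical representation $\rho_{\IntBody_{C,p}K}(u)^{-p}=(p+2n)^{-1}\int_{\unitsurf^{2n-1}}h_C(\pi_u\omega)^p\rho_K(\omega)^{p+2n}\,d\omega$, where only continuity of $\rho_K$ enters).

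The analytic heart is a concentration estimate. For $\psi$ continuous on $\CC$ and supported in $\{|z|\le R_0\}$, passing to polar coordinates $z=r\xi$ and using $h_C(r\xi)=r\,h_C(\xi)$,
\begin{equation*}
	(p+2)\int_{\CC} h_C(z)^p\,\psi(z)\,dz=\int_{\unitsurf^1} h_C(\xi)^p\Bigl[(p+2)\int_0^{R_0} r^{p+1}\psi(r\xi)\,dr\Bigr]d\xi\;\longrightarrow\;\psi(0)\int_{\unitsurf^1} h_C(\xi)^{-2}\,d\xi
\end{equation*}
as $p\to-2^+$, because $s\int_0^{R_0} r^{s-1}g(r)\,dr\to g(0)$ as $s\to0^+$ for continuous $g$ (so the bracket tends to $\psi(0)$, uniformly in $\xi$ when $\psi$ is uniformly continuous) while $h_C(\xi)^p\to h_C(\xi)^{-2}$ uniformly on $\unitsurf^1$, since $0\in\interior C$ forces $h_C>0$. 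Applying this with $\psi=A_K(u,\cdot)$, whence $\psi(0)=A_K(u,0)=V_{2n-2}(K\cap u^{\perp,\CC})$, and using the uniform continuity together with the uniform support bound of $A_K$ to obtain uniformity in $u$, one gets
\begin{equation*}
	(p+2)\,\rho_{\IntBody_{C,p}K}(u)^{-p}\;\longrightarrow\;\Bigl(\int_{\unitsurf^1} h_C(\xi)^{-2}\,d\xi\Bigr)V_{2n-2}(K\cap u^{\perp,\CC})\qquad\text{uniformly on }\unitsurf^{2n-1}.
\end{equation*}

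It remains to identify the right-hand side. As $u^{\perp,\CC}$ is a complex, hence $\unitsurf^1$-invariant, subspace, the substitution $\theta\mapsto c\theta$ in the spherical integral for $V_{2n-2}(K^{\unitsurf^1}\cap u^{\perp,\CC})$, together with Fubini and the rotation invariance of the spherical measure, gives $V_{2n-2}(K^{\unitsurf^1}\cap u^{\perp,\CC})=V_{2n-2}(K\cap u^{\perp,\CC})$; by the definition of $\IntBody_c$ and the $\unitsurf^1$-invariance of $\IntBody_c(K^{\unitsurf^1})$ (so that $\IntBody_c(K^{\unitsurf^1})\cap\linspan^\CC\{u\}$ is a disk of radius $\rho_{\IntBody_c(K^{\unitsurf^1})}(u)$) this equals $V_2\bigl(\IntBody_c(K^{\unitsurf^1})\cap\linspan^\CC\{u\}\bigr)=\pi\,\rho_{\IntBody_c(K^{\unitsurf^1})}(u)^2$. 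Finally, $\Gamma(s)\sim s^{-1}$ as $s\to0^+$ gives $\tfrac{1}{\Gamma(p+2)}\sim p+2$, so $\tfrac{1}{\Gamma(p+2)}\rho_{\IntBody_{C,p}K}(u)^{-p}\to\pi\bigl(\int_{\unitsurf^1} h_C(\xi)^{-2}\,d\xi\bigr)\rho_{\IntBody_c(K^{\unitsurf^1})}(u)^2$ uniformly in $u$; raising to the power $-1/p\to\tfrac12$ (the bases lying in a fixed compact subinterval of $(0,\infty)$ uniformly in $u$ for $p$ near $-2$, so the power map is uniformly continuous there) one concludes
\begin{equation*}
	\Bigl(\tfrac{1}{\Gamma(p+2)}\Bigr)^{-1/p}\rho_{\IntBody_{C,p}K}(u)=\Bigl(\tfrac{1}{\Gamma(p+2)}\,\rho_{\IntBody_{C,p}K}(u)^{-p}\Bigr)^{-1/p}\;\longrightarrow\;k_C\,\rho_{\IntBody_c(K^{\unitsurf^1})}(u)
\end{equation*}
uniformly on $\unitsurf^{2n-1}$, with $k_C=\bigl(\pi\int_{\unitsurf^1} h_C(\xi)^{-2}\,d\xi\bigr)^{1/2}>0$. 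As the left-hand side is the radial function of $\bigl(1/\Gamma(p+2)\bigr)^{-1/p}\IntBody_{C,p}K$ at $u$, this is exactly the claimed convergence.

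I expect the main obstacle to be securing uniformity of the concentration limit over $u\in\unitsurf^{2n-1}$: one needs joint (uniform) continuity of the parallel section function $(u,z)\mapsto V_{2n-2}(K\cap(zu+u^{\perp,\CC}))$ and a support bound uniform in $u$, and then one must carry this uniformity through the $p$-asymptotics, the $\Gamma$-normalization, and the outer exponent $-1/p$. By contrast, the one-dimensional limit $s\int_0^{R_0} r^{s-1}g(r)\,dr\to g(0)$ and the identification of the limiting body are routine.
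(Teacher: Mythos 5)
Your argument is correct and reaches the same normalization constant $k_C=\bigl(\pi\int_{\unitsurf^1} h_C(\xi)^{-2}\,d\xi\bigr)^{1/2}$ as the paper, but it is organized quite differently. The paper first proves an operator-level statement (Proposition~\ref{prop:ConvJOpMin2}): $\tfrac{1}{\Gamma(p+2)}\JOp_{C,p}\to k_C'\ComplSphRad$ in the strong operator topology, and the uniformity over $\unitsurf^{2n-1}$ is obtained via an Arzel\`a--Ascoli argument. The theorem is then deduced in a second step by showing separately that $\rho_K^{2n+p}\to\rho_K^{2n-2}$ uniformly as $p\to-2^+$ and combining, because the spherical representation $\rho_{\IntBody_{C,p}K}^{-p}=\tfrac{1}{2n+p}\JOp_{C,p}\rho_K^{2n+p}$ makes the ``input function'' itself depend on $p$. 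You sidestep that complication entirely by writing $\rho_{\IntBody_{C,p}K}(u)^{-p}=\int_\CC h_C(z)^p\,A_K(u,z)\,dz$ with the parallel section function $A_K(u,\cdot)$ of $K$, which carries no $p$-dependence; the uniformity over $u$ then comes directly from equicontinuity of $A_K(u,\cdot)$ at $z=0$, with no appeal to Arzel\`a--Ascoli. This buys a shorter and more self-contained proof of Theorem~\ref{mthm:limLpIntBodiesComplex}, at the cost of not producing the more general operator statement, which the paper reuses in Section~\ref{sec:spherHarmonics} to compute the multipliers of $\ComplSphRad$.

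One caution: your parenthetical remark that joint continuity of $(u,z)\mapsto A_K(u,z)$ is ``the standard continuity of parallel section functions of star bodies'' is an overstatement. For a general star body the parallel section function can be discontinuous away from the origin; for instance, in $\RR^2$ take $\rho_K(\theta)=1/\cos\theta$ for $|\theta|\le\pi/4$ and $\rho_K(\theta)=\sqrt2$ otherwise, so that a segment of $\partial K$ lies in the line $\{x=1\}$: then $V_1(K\cap\{x=t\})=2\sqrt{2-t^2}$ for $t\le1$ but $=0$ for $t>1$. What your proof actually needs --- and what is true --- is joint continuity of $A_K$ at the points $(u,0)$, together with the uniform bound $A_K\le\kappa_{2n-2}R_0^{2n-2}$ and the uniform support bound $\{|z|\le R_0\}$; continuity at $(u,0)$ holds because $\partial K\cap u^{\perp,\CC}$ is the boundary of the star body $K\cap u^{\perp,\CC}$ in $u^{\perp,\CC}$ and therefore has $(2n-2)$-dimensional measure zero. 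If you restate the continuity claim in this sharper and correct form, the proof goes through unchanged.
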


As before, convergence is with respect to the radial metric. Similar to the real setting, Theorem~\ref{mthm:limLpIntBodiesComplex} is proved by showing by analytic continuation that a certain integral transform, used to define $\IntBody_{C,p}$, converges in the strong operator topology to a multiple of the complex spherical Radon transform (see Section~\ref{sec:prfThmAConv} for the definition), which defines $\IntBody_c$, as $p \to -2^+$. As a direct consequence, we obtain a simple formula for the multipliers of the complex spherical Radon transform, seen as a $\mathrm{U}(n)$-equivariant map on $C(\unitsurf^{2n-1})$, thereby partially recovering results (of higher generality) from Rubin \cite{Rubin2022} and showing that the complex intersection body map $\IntBody_c$, as well as the maps $\IntBody_{C,p}$, are injective on $\unitsurf^1$-invariant star bodies. See Section~\ref{sec:spherHarmonics} for the details of these calculations.

\medskip

Next, we consider an analogue of the well-known convexity theorem by Busemann~\cite{Busemann1949}, as well as the following extension by Berck~\cite{Berck2009} to (real) $L_p$-intersection bodies:
\begin{theorem}[\cite{Berck2009}]\label{thm:berckConvexityLpIntBody}
	Let $p>-1$ be non-zero. If $K \in \starbodiesO(\RR^n)$ is convex and origin-symmetric, then $\IntBody_p K$ is convex.
\end{theorem}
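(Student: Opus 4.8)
The plan is to reduce convexity of $\IntBody_p K$ to a single differential inequality on the circle, and then to prove that inequality by a (regularized) second‑variation argument in which the convexity of $K$ enters precisely at the singular locus.

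\emph{Reduction to planar sections.} A positively $1$‑homogeneous function on $\RR^n$ is convex exactly when its restriction to every $2$‑dimensional linear subspace is; hence $\IntBody_p K$ is convex iff $\IntBody_p K \cap E$ is convex for every plane $E$. Fix such an $E$, choose an orthonormal basis, write $u_\theta \in E \cap \unitsurf^{n-1}$ for the unit vector at angle $\theta$, and put $F(\theta) := \int_K |\langle x, u_\theta\rangle|^p\, dx$, so that $\IntBody_p K \cap E$ has radial function $F^{-1/p}$. Here $F$ is finite and positive (because $p > -1$ makes $|\,\cdot\,|^p$ locally integrable and $K$ has interior points) and $\pi$‑periodic. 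A planar star body with radial function $r$ is convex iff its gauge $r^{-1}$, i.e.\ the support function of the polar section, satisfies $(r^{-1}) + (r^{-1})'' \ge 0$ distributionally; writing $r^{-1} = F^{1/p}$ and differentiating, this amounts to
\begin{equation}\label{eq:coreineq}
	p^2 F^2 + (1-p)(F')^2 + p\,F\,F'' \ge 0 ,
\end{equation}
with $F''$ understood as a distribution. It is \eqref{eq:coreineq} that has to be verified on every plane $E$.

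\emph{A planar measure with a power‑concave density.} By Fubini, $F(\theta) = \int_E |\langle y, u_\theta\rangle|^p\, \psi(y)\, dy$, where $\psi(y) = \vol_{n-2}\bigl(K \cap (y + E^\perp)\bigr)$ is supported on the convex body $P_E K \subseteq E$; by the Brunn--Minkowski inequality $\psi^{1/(n-2)}$ is concave on $P_E K$ (when $n = 2$, read $\psi = \mathbbm{1}_K$). So it suffices to prove \eqref{eq:coreineq} whenever $F(\theta) = \int_{\RR^2} |\langle y, u_\theta\rangle|^p\, d\mu(y)$ for a finite, even measure $\mu$ on $\RR^2$ with a density whose $(n-2)$‑th root is concave on a convex body; passing to polar coordinates one may also record $F$ as the convolution on $\RR / \pi\ZZ$ of $|\cos|^p$ with a nonnegative weight inheriting the regularity and concavity of $\psi$. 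The origin‑symmetry of $K$ renders $\psi$, and the whole picture, even, which will matter for the slice term below.

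\emph{The second variation, and the main obstacle.} For $p > 1$ one differentiates $F$ twice under the integral sign, getting the honest function $F'' + p\,F = p(p-1) \int |\langle y, u_\theta\rangle|^{p-2} \langle y, u_{\theta + \pi/2}\rangle^2\, d\mu(y)$, whereupon \eqref{eq:coreineq} follows from the Cauchy--Schwarz inequality for $\mu$ — this recovers the classical case and uses neither symmetry nor convexity of $K$. For $-1 < p < 1$, $p \ne 0$, this fails: the second variation is a distribution whose singular part sits on the slice $K \cap u_\theta^\perp$, so one must replace $|\langle y, u_\theta\rangle|^p$ by $(\langle y, u_\theta\rangle^2 + \varepsilon^2)^{p/2}$ (equivalently, work with the meromorphically continued distribution $|t|^p$), compute $F_\varepsilon$ and its derivatives, and let $\varepsilon \to 0$. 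What remains is a finite‑part integral together with an extra term carried by the slice, and the concavity of $\psi$ — the imprint of the convexity of $K$ — is exactly what makes these two contributions combine into something nonnegative, yielding \eqref{eq:coreineq}. I expect this to be the crux: justifying the regularized differentiation, isolating the finite part of the second variation together with its slice term, and reading off the sign from the concavity of the density — or, in the convolution picture, combining the identity $(|\cos|^p)'' = -p^2 |\cos|^p + p(p-1)|\cos|^{p-2}$ (valid away from the zeros of $\cos$) with the known sign pattern of the Fourier multipliers of the $|\cos|^p$‑transform and the concavity of the weight. Once \eqref{eq:coreineq} is established for every $E$, convexity of $\IntBody_p K$ follows.
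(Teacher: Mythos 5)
This statement is not proved in the paper at all: it is imported verbatim from Berck~\cite{Berck2009} and used as a black box (both directly, in the proof of Theorem~\ref{mthm:PlushIntBody}, and via Theorem~\ref{prop:compLp=realLp}). So there is no ``paper's own proof'' to compare against. That said, the paper's proof of the complex analogue, Theorem~\ref{mthm:PlushIntBody}, is a faithful transcription of Berck's argument and makes it possible to assess your outline.

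Your planar reduction is correct (and is also the first step in \cite{Berck2009}): a $1$-homogeneous function is convex iff its restriction to every $2$-plane is, and Fubini turns $F(\theta)=\int_K|\langle x,u_\theta\rangle|^p\,dx$ into an integral over the plane against the density $\psi=\vol_{n-2}(K\cap(\,\cdot\,+E^\perp))$, whose $(n-2)$-th root is concave by Brunn--Minkowski. Your derivation of the core inequality $p^2F^2+(1-p)(F')^2+pF F''\ge 0$ is also correct, and the Cauchy--Schwarz disposal of $p>1$ is fine (though note that case neither needs symmetry nor convexity of $K$ and was known long before Berck, since $\IntBody_pK=\Gamma_p^\circ K$ is automatically convex).

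The genuine gap is exactly the one you flag yourself: for $-1<p<1$, $p\ne 0$, the Cauchy--Schwarz step flips sign, the naive second variation diverges, and nothing in your write-up extracts the required positivity from the concavity of $\psi$. The regularization $(\langle y,u_\theta\rangle^2+\varepsilon^2)^{p/2}$ and the Fourier-multiplier remark are suggestive but stop short of an argument. What is actually needed — and what Berck does, and what the paper reproduces in the complex case as Lemma~\ref{lem:derZeroComplMoment}, Proposition~\ref{prop:SecSymmMomentMaxAtZero}, Proposition~\ref{prop:DerRadFctIntBodyD} and Lemma~\ref{lem:LaplVersch} — is a two-part mechanism you do not mention: (i) a \emph{shear}: replace $v$ by $v+\lambda u$ so that the weighted parallel section moment $t\mapsto\int_{\langle y,u\rangle=t}\langle y,v+\lambda u\rangle^2\,d\mu(y)$ (which is power-concave by Pr\'ekopa--Leindler applied to the Brunn-concave density $\psi$) attains its maximum at $t=0$; and (ii) a distributional identity expressing the relevant second derivative of $\rho_{\IntBody_pK}^{-p}$ as a pairing $\langle r^{p-1}_+,\,\Phi\rangle$ against exactly that moment function $\Phi$, where $r^{p-1}_+$ is the analytically continued one-sided power distribution as in \eqref{eq:defDistrtqplus}--\eqref{eq:DistrRqAnExt}. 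The origin-symmetry of $K$ makes $\Phi$ even, so $\Phi'(0)=0$, and after the shear $\Phi(r)\le\Phi(0)$; this is precisely what makes the analytically continued pairing have the needed sign. Without the shear there is no reason for the finite part plus the ``slice term'' to combine into something of one sign, so the argument as written does not close. In short: your structure is right, but the crux you defer is where the convexity of $K$ is actually consumed, and it requires the shear lemma, not just a regularization.
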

\noindent
Here, the condition that the body $K$ is origin-symmetric, that is, $K$ is the unit ball of a (real) norm cannot be omitted. Indeed, by translating the convex body $K$ one can show that the intersection body of a translate is not convex anymore (see \cite{Gardner2006}*{Thm.~8.1.8} and \cite{Brandenburg2023}). Transferring the symmetry condition to the complex setting, real norms are naturally substituted by complex norms, that is, origin-symmetry is replaced by $\unitsurf^1$-invariance, leading to the complex convexity theorem in \cite{Koldobsky2013} for $\IntBody_c$.
\begin{theorem}[\cite{Koldobsky2013}]\label{thm:koldConvexityComplIntBody}
	If $K \in \starbodiesO(\CC^n)$ is convex and $\unitsurf^1$-invariant, then $\IntBody_c K$ is convex.
\end{theorem}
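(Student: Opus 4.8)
The plan is to adapt Busemann's classical convexity theorem \cite{Busemann1949}, replacing the $(n-1)$-dimensional real hyperplane section by the $(2n-2)$-dimensional complex hyperplane section, and using $\unitsurf^1$-invariance to supply the extra symmetry that makes the slicing argument close in codimension two. Since $K$ is $\unitsurf^1$-invariant, so is $\IntBody_c K$, hence $\IntBody_c K\cap\linspan^\CC\{u\}$ is a disk and $\pi\,\rho_{\IntBody_c K}(u)^2=V_{2n-2}(K\cap u^{\perp,\CC})$ for $u\in\unitsurf^{2n-1}$. As $u^{\perp,\CC}$ depends only on $\linspan^\CC\{u\}$, the positively $1$-homogeneous function
\[
	N(u):=\sqrt{\pi}\,\lvert u\rvert\,V_{2n-2}\bigl(K\cap u^{\perp,\CC}\bigr)^{-1/2},\qquad u\in\CC^n\setminus\{0\},
\]
is well defined, the set $\{N\le1\}\cup\{0\}$ equals $\IntBody_c K$, and $\IntBody_c K$ is convex precisely when $N$ is subadditive, i.e.\ a norm. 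Subadditivity of $N$ can be checked on each $2$-dimensional real subspace $P$: if $P$ lies in a complex line then $u^{\perp,\CC}$ is constant on $P$ and $N|_P$ is a multiple of $\lvert\cdot\rvert$, so we may assume $H:=\linspan^\CC\{P\}$ is a complex $2$-plane. For $u\in P\subseteq H$ we decompose $u^{\perp,\CC}=\ell_u\oplus H^{\perp,\CC}$, where $\ell_u=u^{\perp,\CC}\cap H$ is the complex line in $H$ orthogonal to $u$, and Fubini yields
\[
	V_{2n-2}\bigl(K\cap u^{\perp,\CC}\bigr)=\int_{\ell_u}f_H\,dA,\qquad f_H(y):=V_{2n-4}\bigl(K\cap(y+H^{\perp,\CC})\bigr),\quad y\in H.
\]
Here $f_H$ is $\unitsurf^1$-invariant because $K$, $H$ and $H^{\perp,\CC}$ are; for $n\ge3$ the Brunn--Minkowski inequality shows $f_H^{1/(2n-4)}$ is concave on the convex body $\supp f_H=K|H\subseteq H$, while for $n=2$ one simply has $f_H=\mathbbm 1_{K}$.

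The argument is thereby reduced to the following planar statement, which is the heart of the matter: \emph{if $f\ge0$ on $\CC^2$ is $\unitsurf^1$-invariant and $f^{1/s}$ is concave on its (convex) support, where $s=2(n-2)\ge0$ and $s=0$ is read as ``$f$ is the indicator of a convex body'', then $u\mapsto\lvert u\rvert\bigl(\int_{u^{\perp,\CC}}f\,dA\bigr)^{-1/2}$ is a norm on $\CC^2$}; here $u^{\perp,\CC}$ is the complex line in $\CC^2$ orthogonal to $u$, and by $\unitsurf^1$-invariance of $f$ the integrand depends only on that line. To prove it I would run Busemann's interpolation: given $u_1,u_2\in\unitsurf^3$ and $u_3$ on the shorter arc between them, every point of $u_3^{\perp,\CC}$ is a convex combination of suitably dilated points of $u_1^{\perp,\CC}$ and $u_2^{\perp,\CC}$ because $\supp f$ is convex; the $s$-concavity of $f$ (via Brunn--Minkowski or Pr\'ekopa--Leindler) then bounds $f$ at the interpolated point from below, and integrating in polar coordinates over the three complex lines---with the Jacobian coming from the rotation of lines in $\CC^2$, which is exactly what produces the exponent $1/2$---gives the required subadditivity of $N$ along $P$.

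It is worth stressing that $\unitsurf^1$-invariance is genuinely needed: a codimension-two Busemann inequality fails for arbitrary convex bodies, and, as in the real case discussed above, the complex intersection body of a generic translate of a convex body is not convex. Correspondingly, one cannot bypass the interpolation step by reducing to classical real Busemann: the natural lift of $f$ to a symmetric convex body in $\RR^{4+s}=\CC^n$ reproduces precisely the statement ``$\IntBody_c$ of an $\unitsurf^1$-invariant convex body in $\CC^n$ is convex'', so the planar statement is equivalent to the theorem rather than a genuine reduction of it. Hence I expect the only real obstacle to be organizing the one-parameter optimization and the Jacobian bookkeeping in the interpolation step so that it stays transparent, perhaps by recasting the whole estimate as a single Pr\'ekopa--Leindler argument.
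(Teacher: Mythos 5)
The paper states Theorem~\ref{thm:koldConvexityComplIntBody} as a citation to~\cite{Koldobsky2013} and does not prove it, so there is no in-paper argument to compare against. Judged on its own, your reduction steps are sound: subadditivity of the $1$-homogeneous gauge $N$ can indeed be checked on real $2$-planes; the Fubini decomposition $V_{2n-2}(K\cap u^{\perp,\CC})=\int_{\ell_u}f_H$ with $\ell_u=u^{\perp,\CC}\cap H$ is correct; $f_H$ is $\unitsurf^1$-invariant because $K$ and $H^{\perp,\CC}$ are; and Brunn's theorem gives the $1/(2n-4)$-concavity (with $n=2$ degenerating to $f_H=\mathbbm 1_K$). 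This legitimately reduces the theorem to your planar claim on $\CC^2$.

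The gap is exactly the interpolation step, which you describe but do not carry out, and which is the entire content of the theorem. There is, however, a structural fact you do not state which makes the step genuinely tractable and worth making explicit: the map $u\mapsto u^{\perp,\CC}$ on $\CC^2$ is conjugate-linear, so with the generators $w_i=(-\overline{u_{i,2}},\,\overline{u_{i,1}})/|u_i|$ of $\ell_i=u_i^{\perp,\CC}$, the relation $u_3=u_1+u_2$ gives $w_3=\alpha w_1+\beta w_2$ with the \emph{same real positive} coefficients $\alpha=|u_1|/|u_3|$, $\beta=|u_2|/|u_3|$ that appear in the target inequality. With this, the interpolation on the three complex lines reduces ray-by-ray (via $\unitsurf^1$-invariance of $f$) to the one-dimensional Busemann set-up: for $c_3=rw_3$ and $\lambda\in(0,1)$, $c_3=\lambda(\alpha r/\lambda)w_1+(1-\lambda)(\beta r/(1-\lambda))w_2$, so $s$-concavity of $f^{1/s}$ gives $g_3(r)^{1/s}\geq\lambda g_1(\alpha r/\lambda)^{1/s}+(1-\lambda)g_2(\beta r/(1-\lambda))^{1/s}$ for the radial profiles $g_i(r)=f(rw_i)$. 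What you then owe is the moment inequality $J_3^{-1/2}\leq\alpha J_1^{-1/2}+\beta J_2^{-1/2}$ for $J_i=\int_0^\infty g_i(r)\,r\,dr$; note that it is the density $r\,dr$ (the area element on the complex line, not a ``Jacobian from the rotation of lines'') which forces the exponent $-1/2$ rather than the $-1$ of the real case. For $s=0$ this is immediate: $J_i=R_i^2/2$ and optimizing over $\lambda$ gives $1/R_3\leq\alpha/R_1+\beta/R_2$, which is exactly $J_3^{-1/2}\leq\alpha J_1^{-1/2}+\beta J_2^{-1/2}$, so the $n=2$ case closes. For $s>0$ the Busemann one-parameter optimization with the weight $r\,dr$ still needs to be written out; until it is, the proof is incomplete.
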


\noindent
It is a natural question to ask whether complex $L_p$-intersection bodies are convex. As our second main result, we extend Theorems~\ref{thm:berckConvexityLpIntBody} and \ref{thm:koldConvexityComplIntBody} to complex $L_p$-intersection bodies of $\unitsurf^1$-invariant convex bodies in $\CC^n$, weakening for $-2<p<-1$ convexity to pseudo-convexity (see Section~\ref{sec:pseudoConv} for the definition). It is an interesting (open) question whether pseudo-convexity can be strengthened to convexity.

\begin{MainTheorem}\label{mthm:PlushIntBody}
	Suppose that $C \in \convexbodiesO(\CC)$. If $K \in \starbodiesO(\CC^n)$ is convex and $\unitsurf^1$-invariant, then $\interior \IntBody_{C,p} K$ is pseudo-convex, if $-2<p<-1$, and $\IntBody_{C,p} K$ is convex, if $-1 \leq p \neq 0$.
\end{MainTheorem}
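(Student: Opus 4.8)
The strategy is to argue via the Minkowski gauge of $\IntBody_{C,p}K$. Write $\phi$ for the positively $1$-homogeneous extension to $\CC^{n}\cong\RR^{2n}$ of $\rho_{\IntBody_{C,p}K}^{-1}$, so that by Definition~\ref{def:compLpIntersectBody}
\begin{align*}
 \phi(u)^{p}=\int_{K}h_{Cu}(x)^{p}\,dx=:\Psi(u),
\end{align*}
and hence $\phi=\Psi^{1/p}$, $\log\phi=\tfrac1p\log\Psi$. Since $K$ is $\unitsurf^1$-invariant, so is $\IntBody_{C,p}K$; being a star body, it is then \emph{balanced}, i.e.\ $\lambda\,\IntBody_{C,p}K\subseteq\IntBody_{C,p}K$ whenever $\lvert\lambda\rvert\le1$. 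For balanced bodies, $\IntBody_{C,p}K$ is convex if and only if $\phi$ is a convex function, and $\interior\IntBody_{C,p}K$ is pseudo-convex if and only if $\log\phi$ is plurisubharmonic (the classical characterisation of balanced pseudo-convex domains). Thus it suffices to prove the former for $-1\le p\ne0$ and the latter for $-2<p<-1$.

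I would precede the analytic core by two reductions. First, averaging \eqref{eq:defCompLpIntersectBody} over the $\unitsurf^1$-action — which preserves $K$ and Lebesgue measure — and using that $h_{Cu}(x)$ depends on $x$ only through $\langle x,u\rangle_{\CC}\in\CC$, together with $\int_{\unitsurf^1}h_{C}(cw)^{p}\,dc=\lvert w\rvert^{p}\int_{\unitsurf^1}h_{C}(c)^{p}\,dc$, shows that $\IntBody_{C,p}K$ is a dilate of $\IntBody_{B^{2},p}K$; hence one may assume $C=B^{2}$, i.e.\ $h_{Cu}(x)=\lvert\langle x,u\rangle_{\CC}\rvert$. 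Second, using $\unitsurf^1$-invariance once more and Fubini along the complex line $\CC u$,
\begin{align*}
 \Psi(u)=2\pi\,\lvert u\rvert^{p}\int_{0}^{\infty}t^{p+1}A_{\CC u}(t)\,dt,
\end{align*}
where, for a complex line $\ell\subseteq\CC^{n}$ and a unit vector $v\in\ell$, $A_{\ell}(t)=V_{2n-2}\bigl(K\cap(tv+\ell^{\perp,\CC})\bigr)$ is the parallel section function of $K$ orthogonal to $\ell$ (well defined by $\unitsurf^1$-invariance of $K$). At this point the convexity of $K$ enters decisively, via Brunn's concavity theorem: the function $t\mapsto A_{\ell}(t)^{1/(2n-2)}$ is even, is concave on the interval on which it is positive, and is nonincreasing on $[0,\infty)$. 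Thus $\phi(u)=\kappa\,\lvert u\rvert\,\bigl(\int_{0}^{\infty}t^{p+1}A_{\CC u}(t)\,dt\bigr)^{1/p}$ for some constant $\kappa=\kappa_{C,p}>0$, and the problem is reduced to controlling how the line-function $\ell\mapsto\int_{0}^{\infty}t^{p+1}A_{\ell}(t)\,dt$ behaves as $\ell$ varies over the complex lines.

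The heart of the proof — and the step I expect to be the main obstacle — is to show that this $\phi$ is convex for $-1\le p\ne0$ and that $\log\phi$ is plurisubharmonic for $-2<p<-1$; this is the complex counterpart of Berck's convexity theorem (Theorem~\ref{thm:berckConvexityLpIntBody}) and of the Koldobsky--Paouris--Zymonopoulou theorem (Theorem~\ref{thm:koldConvexityComplIntBody}), and one notes that $-1=e^{i\pi}\in\unitsurf^1$ forces $K$ to be origin-symmetric, so the symmetry hypotheses of those results hold automatically. Following the pattern of Busemann's original argument, I would test convexity of $\phi$ on real two-planes through the origin, and plurisubharmonicity of $\log\phi$ on complex lines; after fixing such a plane and parametrising the complex lines $\CC u$ through its points, the assertion reduces in either case to a one-dimensional inequality for the integrals $\int_{0}^{\infty}t^{p+1}A(t)\,dt$ along a linear parameter family of section functions $A$ with $A^{1/(2n-2)}$ even and concave. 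I expect $p=-1$ to be exactly the threshold at which this one-dimensional inequality is strong enough to give convexity of $\phi$, rather than only plurisubharmonicity of $\log\phi$; for $0<p<1$ the scheme is the same, but — as simple examples such as $K=B^{2n}$ already indicate — it cannot be reduced to a triangle-inequality estimate and genuinely uses Brunn's theorem. The delicate feature of the range $-2<p<-1$ is that, since $\ell\mapsto A_{\ell}$ is governed by the codimension-two set on which $h_{Cu}$ vanishes, the Laplacian of $\log\phi$ restricted to a complex line is in general only a signed measure rather than a continuous density, and keeping track of its sign — so as to conclude subharmonicity, hence pseudo-convexity — is where I expect most of the work to lie.
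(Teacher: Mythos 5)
Your preliminary reductions are sound and match the paper's: averaging over $\unitsurf^1$ reduces to $C=\DD$ (the paper's \eqref{eq:S1invariantC=D1}--\eqref{eq:S1invariantC=D2}), and $\unitsurf^1$-invariance forces origin-symmetry, so the symmetry hypotheses of Theorems~\ref{thm:berckConvexityLpIntBody} and~\ref{thm:koldConvexityComplIntBody} are automatically satisfied. The reformulation of pseudo-convexity for a balanced domain as plurisubharmonicity of $\log\phi$ is a legitimate alternative to the Levi condition that the paper checks (Theorem~\ref{thm:leviCondPseudoConvex}), and reducing the problem to the behaviour of $\ell\mapsto\int_0^\infty t^{p+1}A_\ell(t)\,dt$ is the right target.

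However, what you call ``the heart of the proof'' is precisely the part you do not carry out; as written this is a plan, not a proof. The missing pieces are exactly the analytic content of the paper's argument, and none of them is routine: (i) a Brunn--Minkowski inequality for the $p$-moments of complex-hyperplane sections (Proposition~\ref{prop:BMComplMoments}, Corollary~\ref{cor:brunnComplMoment}), giving $(2n-2+p)^{-1}$-concavity of $z\mapsto\mathcal{M}^{\Re,+,u}_{p,v}(K,z)$; (ii) a tilting lemma (Lemma~\ref{lem:derZeroComplMoment}, Proposition~\ref{prop:SecSymmMomentMaxAtZero}) showing that, after replacing $v$ by $v+\lambda u$, the section-moment has its maximum at $z=0$ --- this is where $\unitsurf^1$-invariance is used in an essential, and not purely formal, way (one runs Berck's one-variable tilting argument on the two real coordinate slices of the complex parameter $z$ and needs $K\cap H_{u,it}=i(K\cap H_{u,t})$); and (iii) a distributional/analytic-continuation formula for the relevant second derivative, valid for $-2<p<-1$ (Proposition~\ref{prop:DerRadFctIntBodyD} and \eqref{eq:DistrRqAnExt}). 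Point (iii) is delicate: for $-2<p<-1$ the weight $r^{p-1}$ is not locally integrable at $0$, so $\int_0^\infty r^{p-1}\Phi(r)\,dr$ must be regularized; the paper does this via the analytic continuation $\langle r_+^{p-1},\Phi\rangle=\int_0^\infty r^{p-1}(\Phi(r)-\Phi(0)-r\Phi'(0))\,dr$, and the sign then comes from $\Phi(r)\le\Phi(0)$ (the tilting lemma) together with $\Phi'(0)=0$ (which again uses origin-symmetry). Your remark that the Laplacian of $\log\phi$ along a complex line is ``only a signed measure'' points in a slightly misleading direction: the paper first assumes $K$ smooth, where $\rho_{\IntBody_{\DD,p}K}$ is smooth away from $0$ (since $\JOp_{\DD,p}$ commutes with $\mathrm U(n)$), checks the pointwise Levi condition there, and then passes to general $K$ by approximation through a decreasing family and Theorem~\ref{thm:pseudoConvIntersection}.

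Finally, for the range $-1\le p\ne0$ the paper does not run a new Busemann/Brunn-type argument at all: it proves (Theorem~\ref{prop:compLp=realLp}, via the $L_p$-embedding representation of Proposition~\ref{prop:repCIntBodyByLpIntBody}) that for $\unitsurf^1$-invariant $K$ the complex $L_p$-intersection body is a \emph{dilate} of the real one, and then cites Berck's Theorem~\ref{thm:berckConvexityLpIntBody}. Your direct route for this range is plausible, and if carried out would be a genuinely different argument; but as it stands it is only asserted. To turn your proposal into a proof you need, at minimum, full analogues of items (i)--(iii) above, or, for $p\ge-1$, a proof of the reduction $\IntBody_{C,p}K=d_{C,p}\IntBody_pK$.
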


\noindent
The proof of Theorem~\ref{mthm:PlushIntBody} is very much inspired by the techniques from \cite{Berck2009} and relies for $p>-1$ on Theorem~\ref{thm:berckConvexityLpIntBody}. Indeed, for $p>-1$, we actually show, using techniques from isometric embeddings into $L_p$-spaces, the following close relation.

\begin{MainTheorem}\label{prop:compLp=realLp}
	Suppose that $p>-1$ is non-zero and $C \in \convexbodiesO(\CC)$. Then there exists $d_{C,p} > 0$, such that $\IntBody_{C,p} K = d_{C,p}\IntBody_p K$ for every $\unitsurf^1$-invariant $K \in \starbodiesO(\CC^n)$.
\end{MainTheorem}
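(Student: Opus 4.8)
The plan is to show that, for $\unitsurf^1$-invariant $K$, both $\IntBody_{C,p}K$ and $\IntBody_p K$ are, up to a multiplicative constant depending only on $C$ and $p$, determined by the single rotation-invariant quantity $u\mapsto\int_K|\langle x,u\rangle_\CC|^p\,dx$, where $\langle\cdot,\cdot\rangle_\CC$ denotes the Hermitian inner product on $\CC^n$ (so that $\langle x,u\rangle=\real\langle x,u\rangle_\CC$ for the standard Euclidean inner product on $\CC^n=\RR^{2n}$). The first step is to unwind the support function occurring in Definition~\ref{def:compLpIntersectBody}: for $u\in\unitsurf^{2n-1}$ and $x\in\CC^n$ the real-orthogonal projection of $x$ onto the complex line $\linspan^\CC\{u\}$ corresponds, under the natural isometry $\linspan^\CC\{u\}\cong\CC$, to $z:=\langle x,u\rangle_\CC$, and a short computation (with some care about real versus Hermitian products) gives
\begin{align*}
	h_{Cu}(x)=h_C(z)=h_C(\langle x,u\rangle_\CC),
\end{align*}
where $C$ is now regarded as a convex body in $\CC\cong\RR^2$; in particular $h_{Cu}(c_0x)=h_C(c_0\langle x,u\rangle_\CC)$ for $c_0\in\unitsurf^1$.

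The second step consists of two one-dimensional averaging identities over the circle. Since $h_C$ is positively $1$-homogeneous and, because $0\in\interior C$, continuous and strictly positive on $\unitsurf^1$, the average $\tfrac1{2\pi}\int_{\unitsurf^1}h_C(c_0z)^p\,dc_0$ is both rotation-invariant and $p$-homogeneous in $z$, hence
\begin{align*}
	\tfrac1{2\pi}\int_{\unitsurf^1}h_C(c_0z)^p\,dc_0=a_{C,p}\,|z|^p\qquad\text{for all }z\in\CC,
\end{align*}
where $a_{C,p}:=\tfrac1{2\pi}\int_0^{2\pi}h_C(e^{i\psi})^p\,d\psi\in(0,\infty)$. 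The same reasoning applied to the segment $[-1,1]\subseteq\CC$, i.e.\ to $|\real(c_0z)|=|z|\,|\cos(\arg z+\arg c_0)|$, gives
\begin{align*}
	\tfrac1{2\pi}\int_{\unitsurf^1}|\real(c_0z)|^p\,dc_0=b_p\,|z|^p\qquad\text{for all }z\in\CC,
\end{align*}
with $b_p:=\tfrac1{2\pi}\int_0^{2\pi}|\cos\psi|^p\,d\psi$; this is the classical formula realising $(\CC,|\cdot|)$ isometrically inside $L_p(\unitsurf^1)$, and $b_p$ is finite precisely for $p>-1$, which is exactly where the hypothesis on $p$ enters.

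The third step uses the $\unitsurf^1$-invariance of $K$: for every $c_0\in\unitsurf^1$, multiplication by $c_0$ is a Lebesgue-measure preserving bijection of $K$ onto itself, so $\int_Kg(x)\,dx=\int_Kg(c_0x)\,dx$ for every integrable $g$. Averaging over $c_0\in\unitsurf^1$ and applying Tonelli's theorem (all integrands being nonnegative), we obtain for $u\in\unitsurf^{2n-1}$
\begin{align*}
	\rho_{\IntBody_{C,p}K}(u)^{-p}&=\int_Kh_{Cu}(x)^p\,dx=\int_K\Bigl(\tfrac1{2\pi}\int_{\unitsurf^1}h_C(c_0\langle x,u\rangle_\CC)^p\,dc_0\Bigr)dx=a_{C,p}\int_K|\langle x,u\rangle_\CC|^p\,dx,\\
	\rho_{\IntBody_pK}(u)^{-p}&=\int_K|\langle x,u\rangle|^p\,dx=b_p\int_K|\langle x,u\rangle_\CC|^p\,dx,
\end{align*}
where the integrals on the right are finite (and positive, since $K$ is a body) because the zero set of $x\mapsto\langle x,u\rangle_\CC$ has real codimension $2$, so $\int_K|\langle x,u\rangle_\CC|^p\,dx<\infty$ for all $p>-2$. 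Dividing the two identities gives $\rho_{\IntBody_{C,p}K}(u)^{-p}=(a_{C,p}/b_p)\,\rho_{\IntBody_pK}(u)^{-p}$ for every $u$, whence $\IntBody_{C,p}K=d_{C,p}\IntBody_pK$ with $d_{C,p}:=(a_{C,p}/b_p)^{-1/p}>0$.

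I expect no serious obstacle: once the identity $h_{Cu}(x)=h_C(\langle x,u\rangle_\CC)$ and the two circle-averaging formulas are established, the remainder is a routine application of $\unitsurf^1$-invariance together with Fubini--Tonelli. The only points that genuinely require care are the bookkeeping of the real versus the Hermitian inner product (and a possible conjugation, which is harmless once one averages over $\unitsurf^1$) and the integrability at the boundary exponent $p=-1$, which is precisely why the statement is restricted to $p>-1$.
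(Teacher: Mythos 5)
Your proof is correct and takes a genuinely different, more elementary route than the paper's. The paper deduces this theorem from Proposition~\ref{prop:repCIntBodyByLpIntBody}, which rests on the nontrivial fact (Lemma~\ref{lem:2DimBodyEmbedLp}) that every two-dimensional origin-symmetric normed space embeds isometrically in $L_p$ for $-2<p\le 1$; for negative $p$ this is a distributional statement requiring Fourier transforms and $k$-intersection-body machinery. Your argument instead pushes the $\unitsurf^1$-invariance of $K$ directly into the integrand: for each fixed $u$, both $\rho_{\IntBody_{C,p}K}(u)^{-p}$ and $\rho_{\IntBody_p K}(u)^{-p}$ collapse, after averaging over the circle action on $K$, to explicit constant multiples of the single quantity $\int_K|x\cdot u|^p\,dx$, using only the one-dimensional observation that a circle average of a $p$-homogeneous function of $z\in\CC$ is a constant times $|z|^p$. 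This is strictly simpler, avoids all Fourier analysis, yields a closed formula $d_{C,p}=(a_{C,p}/b_p)^{-1/p}$, and works for arbitrary $C\in\convexbodiesO(\CC)$ without an origin-symmetry hypothesis on $C$ --- which is exactly what Theorem~\ref{prop:compLp=realLp} asserts, whereas the paper's cited Proposition~\ref{prop:repCIntBodyByLpIntBody} is only stated for origin-symmetric $C$ (the paper implicitly reduces to that case via the identity $\JOp_{C,p}f^{\unitsurf^1}=\JOp_{d\DD,p}f$, so there is no gap, but your route makes this step unnecessary). The trade-off is that the paper needs Proposition~\ref{prop:repCIntBodyByLpIntBody} anyway to prove the volume inequality of Theorem~\ref{mthm:ineqComplexLpVsLp}, and obtaining Theorem~\ref{prop:compLp=realLp} as a one-line corollary of it is economical in that context, whereas your self-contained argument does not feed into the inequality.
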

\noindent
Let us also note that, in general, Theorem~\ref{mthm:PlushIntBody} without the assumption of $\unitsurf^1$-invariance is false, if $p\geq-1$, as we show in Section~\ref{sec:counterExConv}.

\medskip

Turning now to inequalities for intersection bodies, our next main result relates the volume of complex $L_p$-intersection bodies with the volume of their real counterparts.
\begin{MainTheorem}\label{mthm:ineqComplexLpVsLp}
	Suppose that $C \in \convexbodiesO(\CC)$ is origin-symmetric and $-1\leq p < 1$ is non-zero. If $K \in \starbodiesO(\CC^n)$, then
	\begin{align}\label{eq:thmIneqComplLpVsLpClaimIntro}
		\Vol{2n}{\IntBody_{C,p}K}/\Vol{2n}{\IntBody_{C,p}B^{2n}} \leq \Vol{2n}{\IntBody_p K}/\Vol{2n}{\IntBody_{p}B^{2n}}.
	\end{align}
\end{MainTheorem}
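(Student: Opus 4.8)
The plan is to realise $\IntBody_{C,p}$ as a circular average of rotates of the real $L_p$-intersection body $\IntBody_p$ and then to conclude by Jensen's inequality. First I would record the elementary identity $h_{Cu}(x)=h_C\bigl(\langle x,u\rangle_{\CC}\bigr)$ for $u\in\unitsurf^{2n-1}$, $x\in\CC^n$, where $\langle\cdot,\cdot\rangle_{\CC}$ is the Hermitian inner product on $\CC^n$ and $\CC$ is identified with $\RR^2$; it follows from $\langle cu,x\rangle=\real\bigl(c\,\langle u,x\rangle_{\CC}\bigr)$ for $c\in\CC$. Since $C$ is origin-symmetric and $-1<p<1$ is non-zero, the $L_p$-embedding input behind Theorem~\ref{prop:compLp=realLp} supplies a non-zero, non-negative finite Borel measure $\mu=\mu_{C,p}$ on $\unitsurf^1$ representing $h_C(\cdot)^p$ on $\CC\cong\RR^2$ as a $p$-cosine transform; using $\langle x,u\rangle_{\CC}\,\bar\zeta=\langle x,\zeta u\rangle_{\CC}$ and $\real\langle\cdot,\cdot\rangle_{\CC}=\langle\cdot,\cdot\rangle$, this upgrades to
\[
 h_{Cu}(x)^p=\int_{\unitsurf^1}\bigl|\langle x,\zeta u\rangle\bigr|^p\,d\mu(\zeta),\qquad u\in\unitsurf^{2n-1},\ x\in\CC^n .
\]
Integrating over $K$ and using \eqref{eq:defLpIntersectBody} (note $\zeta u\in\unitsurf^{2n-1}$) then gives the basic formula
\[
 \rho_{\IntBody_{C,p}K}(u)^{-p}=\int_{\unitsurf^1}\rho_{\IntBody_p K}(\zeta u)^{-p}\,d\mu(\zeta),\qquad u\in\unitsurf^{2n-1}.
\]

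Evaluating this at $K=B^{2n}$, where $\IntBody_p B^{2n}$ is a ball and hence $\rho_{\IntBody_p B^{2n}}(\zeta u)$ is a constant $\rho_0$, shows that $\IntBody_{C,p}B^{2n}$ is the ball of radius $M^{-1/p}\rho_0$ with $M:=\mu(\unitsurf^1)>0$, so that $V_{2n}(\IntBody_{C,p}B^{2n})=M^{-2n/p}\,V_{2n}(\IntBody_p B^{2n})$. For general $K\in\starbodiesO(\CC^n)$ I would rewrite the basic formula with the probability measure $\nu:=\mu/M$, raise it to the power $-2n/p$, and apply Jensen's inequality to $t\mapsto t^{-2n/p}$, which is convex on $(0,\infty)$ because $-2n/p\notin(0,1)$ for every $-1\le p<1$, $p\neq0$; this yields
\[
 \rho_{\IntBody_{C,p}K}(u)^{2n}\le M^{-2n/p}\int_{\unitsurf^1}\rho_{\IntBody_p K}(\zeta u)^{2n}\,d\nu(\zeta).
\]
Integrating over $\unitsurf^{2n-1}$, using Tonelli and the invariance of the spherical Lebesgue measure under the rotation $u\mapsto\zeta u\in\SO(2n)$, the inner integral equals $2n\,V_{2n}(\IntBody_p K)$ for every $\zeta$, whence $V_{2n}(\IntBody_{C,p}K)\le M^{-2n/p}\,V_{2n}(\IntBody_p K)$. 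Dividing the two volume estimates gives \eqref{eq:thmIneqComplLpVsLpClaimIntro} for $-1<p<1$. The boundary case $p=-1$ then follows by letting $p\to-1^+$: the right-hand side of \eqref{eq:thmIneqComplLpVsLpClaimIntro} converges, by \eqref{eq:convRealLpIntBody}, to the ratio $V_{2n}(\IntBody K)/V_{2n}(\IntBody B^{2n})$ for the classical intersection body (the divergent normalising constant cancels in the quotient), while the left-hand side converges by dominated convergence in \eqref{eq:defCompLpIntersectBody}, an integrable dominating function existing because $h_{Cu}(x)^{-1}$ is comparable to the reciprocal of the distance to the codimension-$2$ set $\{x:\langle x,u\rangle_{\CC}=0\}$, which is locally integrable on $\RR^{2n}$.

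I expect the genuine obstacle to be the first step: securing the positive-measure $p$-cosine representation of $h_C(\cdot)^p$ on $\RR^2$ for the full range $-1<p<1$, that is, the isometric embeddability of origin-symmetric planar convex bodies into $L_p$ (including the negative-$p$ range in the sense of Koldobsky). This is precisely the mechanism underlying Theorem~\ref{prop:compLp=realLp}, so it should be at hand. Once it is, the rest is elementary; the only points of care are that $-2n/p$ never lies in $(0,1)$, so Jensen's inequality always points the right way, and that at $p=-1$ the pointwise representation genuinely fails (the $(-1)$-cosine transform of a non-zero positive measure is everywhere infinite), which is why a limiting argument is needed there rather than a direct one.
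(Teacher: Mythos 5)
Your proposal follows essentially the same route as the paper for $-1<p<1$: your basic formula
\[
\rho_{\IntBody_{C,p}K}(u)^{-p}=\int_{\unitsurf^1}\rho_{\IntBody_p K}(\zeta u)^{-p}\,d\mu(\zeta)
\]
is exactly Proposition~\ref{prop:repCIntBodyByLpIntBody}, and the subsequent Jensen-plus-Fubini step is the same as the paper's proof of Theorem~\ref{thm:ineqComplexLpVsLp}, up to the cosmetic choice of normalising $\mu$ into a probability measure. Two remarks are in order. First, for $-1<p<0$ your derivation of the basic formula from ``a pointwise $p$-cosine representation of $h_C^p$'' is more direct than what the paper actually has at hand: Definition~\ref{def:embedIsomLpNeg} is a distributional/Fourier condition, and the paper must pass through Lemma~\ref{lem:ComplRealRadonFourier} and the identity $\int_0^\infty t^{-p-1}\hat\phi(tc)\,dt=c_p\int_0^\infty r^{p}\tilde\phi_c(r)\,dr$ before it can integrate against $\mathbbm{1}_K$; your ``it should be at hand'' compresses a genuine (if not deep) computation that the paper spells out, and the pointwise version of the representation only holds a.e.\ in $z$, which is enough after integrating over $K$ but is worth saying. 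Second, and more substantively, your treatment of the endpoint $p=-1$ genuinely differs from the paper's: you pass to the limit $p\to-1^+$ in the established strict-range inequality (using \eqref{eq:convRealLpIntBody} for the right-hand side and a dominated-convergence argument for the left), whereas the paper proves the representation formula directly at $p=-1$ by identifying $d\mu_{C,-1}=\tfrac12\rho_{iC^\circ}(c)\,dc$ via a limiting argument at the level of the distributions $r^p_+$. Your limiting argument does prove the inequality, provided you verify the uniform-in-$u$ convergence of $\rho_{\IntBody_{C,p}K}$ as $p\to-1^+$ that is implicit in your dominated-convergence claim (the codimension-$2$ integrability estimate you give makes this plausible, but it is not established elsewhere in the paper). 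What your route forgoes, and the paper's buys, is the explicit form of $\mu_{C,-1}$, which the paper needs to characterise equality in Theorem~\ref{thm:ineqComplexLpVsLp} and hence the uniqueness of the extremisers in Theorem~\ref{mthm:CompIntIneq} at $p=-1$.
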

\noindent
Let us note that the equality cases of \eqref{eq:thmIneqComplLpVsLpClaimIntro} can be completely described by a technical statement in terms of the convex body $C$ and will be stated later in Section~\ref{sec:ProofIntIneq}. By Theorem~\ref{prop:compLp=realLp}, clearly $\unitsurf^1$-invariant bodies satisfy equality. From Theorem~\ref{mthm:ineqComplexLpVsLp}, we deduce the following generalization of Busemann's intersection inequality~\eqref{eq:BusIntersectIneq} for $\IntBody_{C,p}$ leading to affine isoperimetric inequalities in the following sense. Here, we call an ellipsoid $E$ Hermitian, if $E = \varphi(B^{2n}) + t$ for $\varphi \in \GL(n,\CC), t \in \CC^n$.
\begin{MainCorollary}
	\label{mcor:CompIntIneq}
	Suppose that $C \in \convexbodiesO(\CC)$ is origin symmetric and $0<p<1$ or $-1 \leq p<0$ and $n/|p| \in \NN$.  Among $K \in \starbodiesO(\CC^n)$, the ratio
	\begin{align*}
		V_{2n}\left(\IntBody_{C,p}K\right)/V_{2n}\left(K\right)^{2n+p}
	\end{align*}
	is maximized by origin-symmetric Hermitian ellipsoids. If $p=-1$, these are the only maximizers.
\end{MainCorollary}

Indeed, Theorem~\ref{mthm:ineqComplexLpVsLp} shows that affine isoperimetric inequalities for real $L_p$ intersection bodies are stronger than their complex counterparts. Corollary~\ref{mcor:CompIntIneq} therefore follows directly from the very recent breakthrough in \cite{Adamczak2022}, where the following inequality for $L_p$-intersection bodies was proved using methods from stochastic geometry. 
\begin{theorem}[\cite{Adamczak2022}]
	\label{thm:lpIntIneqAdamczakEtAl}
	Suppose that $0<p<1$ or $-1<p<0$ and $n/|p| \in \NN$. 
	
	\noindent
	Among $K \in \starbodiesO(\RR^n)$, the ratio
	\begin{align*}
		V_n(\IntBody_{p} K)/V_n(K)^{n+p}
	\end{align*}
	is maximized by origin-symmetric ellipsoids.
\end{theorem}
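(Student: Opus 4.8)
The plan is to follow the stochastic--geometric route that underlies Campi--Gronchi's shadow-system treatment of the $L_p$ Busemann--Petty centroid inequality and the Paouris--Pivovarov ``empirical'' method in isoperimetry: express $V_n(\IntBody_p K)$ as the expectation of a ``dual random simplex'' functional of a finite i.i.d.\ sample of points of $K$, and then settle the resulting extremal problem by Steiner symmetrization. \emph{Step 1 (affine reduction).} From \eqref{eq:defLpIntersectBody},
\[
V_n(\IntBody_p K)=\frac1n\int_{\unitsurf^{n-1}}\rho_{\IntBody_p K}(u)^n\,du=\frac1n\int_{\unitsurf^{n-1}}\Bigl(\int_K|\langle x,u\rangle|^p\,dx\Bigr)^{-n/p}du.
\]
The change of variables $x\mapsto Tx$ shows that $\IntBody_p$ is $\SL_n(\RR)$-covariant, so $V_n(\IntBody_p K)$ and $V_n(K)$ are $\SL_n(\RR)$-invariant; hence it suffices to show that, at fixed volume, the quantity above is largest when $K$ is a Euclidean ball $B$, and --- for $p=-1$ --- only then (up to a volume-preserving linear map).

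\emph{Step 2 (probabilistic representation).} The idea is to convert the outer negative power into an honest integral over copies of $K$. If $-1<p<0$ and $m:=n/|p|\in\NN$, Fubini on the $m$-fold product gives at once
\[
\rho_{\IntBody_p K}(u)^n=\Bigl(\int_K|\langle x,u\rangle|^{-n/m}dx\Bigr)^{m}=\int_{K^m}\prod_{i=1}^m|\langle x_i,u\rangle|^{-n/m}\,d(x_1,\dots,x_m),
\]
so that, interchanging integrations,
\[
V_n(\IntBody_p K)=\frac1n\int_{K^m}\Phi_m(x_1,\dots,x_m)\,d(x_1,\dots,x_m),\qquad
\Phi_m(x_1,\dots,x_m):=\int_{\unitsurf^{n-1}}\prod_{i=1}^m|\langle x_i,u\rangle|^{-n/m}\,du.
\]
Here the assumption $n/|p|\in\NN$ is exactly what forces the exponent $-n/m\in(-1,0)$, which makes $\Phi_m$ finite off the locus where $x_1,\dots,x_m$ are linearly dependent and locally integrable on $(\RR^n)^m$; for general $p\in(-1,0)$ the singularity is too strong and this Fubini fails. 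If instead $0<p<1$, the exponent $-n/p$ is no longer a positive integer and one proceeds via $t^{-n/p}=\Gamma(n/p)^{-1}\int_0^\infty\lambda^{n/p-1}e^{-\lambda t}\,d\lambda$, writing $e^{-\lambda\int_K|\langle x,u\rangle|^p dx}$ as the void probability of a Poisson process on $K\times[0,\infty)$ of intensity $\lambda\,dx\otimes ds$ and expanding it; this again presents $V_n(\IntBody_p K)$ as an average, over a Poisson-distributed i.i.d.\ uniform sample in $K$, of a kernel of the shape $\int_{\unitsurf^{n-1}}\prod_j|\langle x_j,u\rangle|^{p}\,\mathbf 1\{\cdots\}\,du$. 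In either case the theorem is reduced to: \emph{for each fixed sample size, the integral $\int_{K^m}\Phi_m$ (resp.\ its Poisson analogue) is, at fixed volume, maximal for $K=B$.}

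\emph{Step 3 (symmetrization).} We attack this by Steiner symmetrization. Fixing a direction $\theta\in\unitsurf^{n-1}$ and writing $x_i=t_i\theta+y_i$ with $y_i\in\theta^\perp$, each $\langle x_i,u\rangle=t_i\langle\theta,u\rangle+\langle y_i,u\rangle$ is a single affine function of $t_i$ (with coefficients depending only on $u$ and $y_i$), while $s\mapsto|s|^{-n/m}$ is even and decreasing in $|s|$; since $K$ is convex, the $\theta$-slices $\{t:t\theta+y_i\in K\}$ are intervals. Embedding the symmetrization into the shadow system joining $K$ to its $\theta$-reflection (whose midpoint is the Steiner symmetral $S_\theta K$), the key technical claim is that the inner integral, as a function of the shadow parameter, is concave --- a one-dimensional statement about integrals of the weight $|s|^{-n/m}$ over translating intervals --- and hence is maximal at the symmetric position. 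Integrating in $u$ and in the $y_i$ then yields $\int_{(S_\theta K)^m}\Phi_m\ge\int_{K^m}\Phi_m$ for every $\theta$, and similarly for the Poisson kernel; since a suitable sequence of Steiner symmetrizations converges to a ball of the same volume in the radial metric and the integrals depend continuously on $K$, we obtain $\int_{K^m}\Phi_m\le\int_{B^m}\Phi_m$. For $p=-1$ (so $m=n$) one has the classical identity $\Phi_n(x_1,\dots,x_n)=c_n\,|\det[x_1,\dots,x_n]|^{-1}$, so that the inequality is precisely the dual Busemann random-simplex inequality, whose equality case is known to force $K$ to be an ellipsoid; tracking equality in the symmetrization step gives the same conclusion here.

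\emph{Main obstacle.} The crux is the ``key technical claim'' in Step~3 together with the probabilistic representation for $p>0$. One must (i) for $0<p<1$, arrange the void-probability expansion so that the indicator constraints produced by the auxiliary coordinate $s$ are still compatible with a one-dimensional analysis along each symmetrization direction; (ii) establish the fibrewise concavity (or at least the required monotonicity) along the shadow system for the singular weight $|s|^{-n/m}$, controlling the borderline-integrable singularities of $\Phi_m$ along the dependent locus when passing to the symmetrization limit --- this is exactly where the hypotheses $0<p<1$ and $n/|p|\in\NN$ are used; and (iii) isolate the equality case at $p=-1$, which rests on the classical fact that a star body which is, up to translations, invariant under all Steiner symmetrizations is an ellipsoid. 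Steps (i)--(ii) carry the essential analytic difficulty; convexity of $K$ enters only to make the slices intervals, which is why the hypothesis is merely that $K$ is a star body while the extremizers are convex.
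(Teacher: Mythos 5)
You should first be aware that the paper does not prove Theorem~\ref{thm:lpIntIneqAdamczakEtAl} at all: it is imported from \cite{Adamczak2022} and used only as an ingredient in the proof of Theorem~\ref{mthm:CompIntIneq}, so there is no in-paper proof to compare against. Your outline does capture the spirit of the cited work (a stochastic/empirical representation followed by symmetrization), and your Step~2 for $-1<p<0$ correctly identifies the role of the hypothesis $n/|p|\in\NN$: it makes the outer exponent $-n/p=m$ an integer so that $\rho_{\IntBody_pK}(u)^n$ unfolds by Tonelli into an $m$-fold integral over $K^m$ against the kernel $\Phi_m$ (though your side remark that $n/|p|\in\NN$ ``forces'' the exponent into $(-1,0)$ is off --- that is just the hypothesis $-1<p<0$ --- and for $m>n$ the finiteness locus of $\Phi_m$ is not the complement of linear dependence).

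As a proof, however, the proposal has genuine gaps, and the central one is the ``key technical claim'' of Step~3. Concavity along the shadow parameter fails already in the one-dimensional model you reduce to: for $0<\alpha<1$ the function $a\mapsto\int_{a-\ell}^{a+\ell}|s|^{-\alpha}\,ds$ is symmetric and unimodal in $a$ but convex for $|a|>\ell$, so it is not concave; moreover, for a fixed $u\neq\pm\theta$ Steiner symmetrization in direction $\theta$ can strictly \emph{decrease} $\int_K|\langle x,u\rangle|^p\,dx$, because along a chord the weight $|t\langle\theta,u\rangle+\langle y,u\rangle|^p$ is peaked at $t=-\langle y,u\rangle/\langle\theta,u\rangle$, not at $0$. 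Hence no fibrewise or pointwise-in-$u$ argument can work; one must exploit the coupling of the $m$ chord variables through the integration in $u$, which is exactly the nontrivial Rogers--Brascamp--Lieb--Luttinger/Campi--Gronchi-type content of \cite{Adamczak2022} and is left unproved here. Second, the theorem concerns star bodies, while your symmetrization uses convexity (interval slices, shadow systems of convex bodies); treating general star bodies requires rearranging measurable sets, a different toolset, and your closing remark does not bridge this. Third, the case $0<p<1$ is only a heuristic: the Poisson void-probability expansion is not carried out, and no integer-power reduction is available there. Finally, your $p=-1$ discussion is both outside the stated range and based on a false identity: $\int_{\unitsurf^{n-1}}\prod_{i=1}^n|\langle x_i,u\rangle|^{-1}\,du$ diverges for every configuration, since each factor has a non-integrable codimension-one singularity, so $\Phi_n\neq c_n|\det[x_1,\dots,x_n]|^{-1}$; in the paper the $p=-1$ equality case of Theorem~\ref{mthm:CompIntIneq} is handled by Busemann's inequality~\eqref{eq:BusIntersectIneq}, not by this theorem.
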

\noindent Let us point out that Theorem~\ref{thm:lpIntIneqAdamczakEtAl} is a much deeper result than Theorem~\ref{mthm:ineqComplexLpVsLp}.



\section{Definition and Basic Properties of Complex $L_p$-Intersection Bodies}
\label{sec:complLpIntBodies}
In this section, we prove that by Definition~\ref{def:compLpIntersectBody} the complex $L_p$-intersection body map is well defined and show basic properties. We will deduce this from properties of a more general operator $\JOp_{C,p}$ on $C(\unitsurf^{2n-1})$. At first, we fix some notation and recall basic facts. Further background will be given in the section where it will be required first. As a general reference on convex bodies, we refer to the monographs by Gardner~\cite{Gardner2006} and Schneider~\cite{Schneider2014}.

\medskip

For a complex number $c \in \CC$, we write $\overline c$ for its complex conjugate, this also extends to $x \in \CC^n$. By identifying $\CC^n \cong \RR^{2n}$, the vector space $\CC^n$ can be endowed with the canonical inner product $\langle \cdot, \cdot \rangle$ on $\RR^{2n}$ and a complex inner product, which are related by
\begin{align*}
	x \cdot u = \langle x, u\rangle + i \langle -ix, u\rangle, \quad x, u \in \CC^n.
\end{align*}
Note that by our convention, $x \cdot (\lambda u) = \overline{\lambda} (x \cdot u)$, $x, u \in \CC^n$, $\lambda \in \CC$. Consequently, by identifying $\CC \cong \RR^2$, 
\begin{align}\label{eq:RelSkalProdCReal}
 \langle c, x \cdot u\rangle = \langle cu, x\rangle, \quad x,u \in \CC^n, c \in \CC.
\end{align}
The unit disk in $\CC$ is denoted by $\DD$. Recalling the definition of support functions, $h_K(u) = \sup\{\langle x, u\rangle: x \in K\}$ of $K \in \convexbodies(\CC^n)$, \eqref{eq:RelSkalProdCReal} directly implies for every $C \in \convexbodies(\CC)$,
\begin{align}\label{eq:hCuEqualhCudot}
 h_{Cu}(x) = h_C(x \cdot u), \quad x, u \in \CC^n.
\end{align}

\medskip
\noindent
For $K \in \starbodiesO(\CC^n)$, the complex parallel section function $\CompParSec{K}{u}$ is defined by
\begin{align}\label{eq:defCompParSec}
	\CompParSec{K}{u}(z) = V_{2n-2}(K \cap \{x \in \CC^n: x \cdot u = z\}), \quad u \in \CC^n\setminus\{0\}, z \in \CC.
\end{align}
Similarly, the real parallel section function $\RealParSec{K}{u}$ is defined using intersections by real (affine) hyperplanes. $\CompParSec{K}{u}$ can be written as complex Radon transform $\mathcal{R}^\CC_u[\mathbbm{1}_K]$ of the indicator function $\mathbbm{1}_K$ of $K$, where for $\psi \in C(\CC^n)$ with compact support,
\begin{align*}
	\mathcal{R}^\CC_u[\psi](z) = \int_{x \cdot u = z} \psi(x)dx, \quad u \in \CC^n \setminus\{0\}, z \in \CC.
\end{align*}
Moreover, by Fubini's theorem, $\CompParSec{K}{u}$ can be used to express certain integrals over parallel complex hyperplanes, that is,
\begin{align}\label{eq:propComplParSec}
	\int_K \varphi(x \cdot u) dx = \int_\CC \varphi(z) \CompParSec{K}{u}(z) dz
\end{align}
for every $\varphi \in C(\CC)$ and $u \in \CC^n\setminus\{0\}$.

\medskip

Suppose that $C \in \convexbodies(\CC)$ contains the origin in its relative interior, $\dim C > 0$ and let $p$ be non-zero with $p>-\dim C$. For every $f \in C(\unitsurf^{2n-1})$, we define $\JOp_{C,p} f$ by
\begin{align}\label{eq:defJOp}
 (\JOp_{C,p} f)(u) = \int_{\unitsurf^{2n-1}} h_{C}(v \cdot u)^p f(v) dv, \quad u \in \unitsurf^{2n-1}.
\end{align}
Rewriting Definition~\ref{def:compLpIntersectBody} in polar coordinates, shows that 
\begin{align}\label{eq:IntBodyByJOp}
\rho_{\IntBody_{C,p} K}^{-p} = \frac{1}{2n+p}\JOp_{C,p}(\rho_K^{2n+p}),
\end{align}
for every $K \in \starbodiesO(\CC^n)$.

\begin{lemma}\label{lem:JOpWellDef}
 $\JOp_{C,p}$ is a well-defined operator on $C(\unitsurf^{2n-1})$, which is Lipschitz continuous with Lipschitz-constant $\|\JOp_{C,p} 1\|_\infty$. Moreover, if $f \in C(\unitsurf^{2n-1})$ is strictly positive, so is $\JOp_{C,p} f$.
\end{lemma}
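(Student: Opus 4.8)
The strategy is to reduce everything to the finiteness and positivity of the single number $c_{C,p}:=(\JOp_{C,p}1)(u)=\int_{\unitsurf^{2n-1}}h_C(v\cdot u)^p\,dv$, which by the $\mathrm{U}(n)$-equivariance of the Hermitian form $v\cdot u$ is independent of $u$ and hence equals $\|\JOp_{C,p}1\|_\infty$. The hypothesis $p>-\dim C$ will enter exactly here.

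\emph{Finiteness.} I would first pass to polar coordinates and use \eqref{eq:propComplParSec} (applied to $\mathbbm{1}_{B^{2n}}$; since the integrand is nonnegative, Tonelli suffices even though $h_C(\cdot)^p$ is not continuous for $p<0$) to write $c_{C,p}=(2n+p)\int_{B^{2n}}h_C(x\cdot u)^p\,dx=(2n+p)\kappa_{2n-2}\int_{\DD}h_C(z)^p(1-|z|^2)^{n-1}\,dz$, using the standard computation $\CompParSec{B^{2n}}{u}(z)=\kappa_{2n-2}(1-|z|^2)^{n-1}$ for $|z|\le1$. As $(1-|z|^2)^{n-1}$ is bounded on $\DD$, finiteness of $c_{C,p}$ reduces to $\int_{\DD}h_C(z)^p\,dz<\infty$. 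Writing $L=\linspan C\subseteq\CC$, the hypotheses on $C$ (bounded, with $0$ in its relative interior) provide $r,R>0$ with $r\operatorname{dist}(z,L^\perp)\le h_C(z)\le R|z|$, so for $p<0$ the integrand is dominated near its only singular locus $L^\perp\cap\DD$ by $r^p\operatorname{dist}(z,L^\perp)^p$; a short estimate in coordinates adapted to $L^\perp$ shows $\int_{\DD}\operatorname{dist}(z,L^\perp)^p\,dz<\infty$ precisely when $p>-(2-\dim L^\perp)=-\dim C$. Hence $c_{C,p}<\infty$, and for any $f\in C(\unitsurf^{2n-1})$ we get $\int_{\unitsurf^{2n-1}}h_C(v\cdot u)^p|f(v)|\,dv\le\|f\|_\infty\,c_{C,p}<\infty$, so $\JOp_{C,p}f$ is defined pointwise.

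\emph{Lipschitz bound and continuity of $\JOp_{C,p}f$.} Linearity together with $h_C(v\cdot u)^p\ge0$ (valid a.e., since $0\in C$) gives $|(\JOp_{C,p}f)(u)-(\JOp_{C,p}g)(u)|\le\|f-g\|_\infty\,c_{C,p}=\|f-g\|_\infty\,\|\JOp_{C,p}1\|_\infty$, which is the asserted Lipschitz estimate (and in particular shows $\JOp_{C,p}$ is bounded). For continuity of the function $\JOp_{C,p}f$ I would exploit unitary invariance: given $u_0$ and $u$ nearby, choose a unitary $\phi$ with $\phi u_0=u$ depending continuously on $u$ with $\phi=\mathrm{id}$ at $u_0$ (a local continuous section of $\mathrm{U}(n)\to\unitsurf^{2n-1}$); since $\phi$ preserves both the Hermitian form and the spherical measure, the substitution $w=\phi^{-1}v$ yields $(\JOp_{C,p}f)(u)=\int_{\unitsurf^{2n-1}}h_C(w\cdot u_0)^p f(\phi w)\,dw$, whence $|(\JOp_{C,p}f)(u)-(\JOp_{C,p}f)(u_0)|\le c_{C,p}\sup_{w}|f(\phi w)-f(w)|\to0$ as $u\to u_0$ by uniform continuity of $f$ on the compact sphere. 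Thus $\JOp_{C,p}f\in C(\unitsurf^{2n-1})$, so $\JOp_{C,p}$ is a well-defined operator on $C(\unitsurf^{2n-1})$.

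\emph{Positivity, and the main difficulty.} If $f$ is strictly positive then $f\ge\delta>0$ on $\unitsurf^{2n-1}$ by compactness, so $(\JOp_{C,p}f)(u)\ge\delta\,c_{C,p}$; since $h_C(v\cdot u)\in(0,\infty)$ for a.e.\ $v$ (the exceptional set $\{v:v\cdot u\in L^\perp\}$ is null) we have $c_{C,p}>0$, and hence $\JOp_{C,p}f$ is strictly positive. The only genuinely delicate point in the whole argument is the finiteness step, and there the crux is the claim that the integrability threshold of $h_C(z)^p$ at the zero set $L^\perp$ of $h_C$ is exactly $p=-\dim C$; once this is established, the Lipschitz bound, the continuity of $\JOp_{C,p}f$, and the positivity are all routine.
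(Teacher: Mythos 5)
Your proof is correct and follows essentially the same strategy as the paper: reduce to the finiteness of the $u$-independent constant $(\JOp_{C,p}1)(u)$, pass via polar coordinates and the complex parallel section function to an integral over $\DD$, and control the singularity of $h_C(z)^p$ on the zero set of $h_C$. Two small but worthwhile differences: the paper splits the integrability estimate into the cases $\dim C=2$ and $\dim C=1$, comparing $h_C$ to $h_{\DD}$ (resp.\ to the support function of an interval), while you treat both cases uniformly via the two-sided bound $r\operatorname{dist}(z,L^\perp)\le h_C(z)\le R|z|$ and identify $-\dim C$ directly as the integrability threshold of $\operatorname{dist}(z,L^\perp)^p$ on $\DD$; this is a bit slicker and makes the role of the hypothesis $p>-\dim C$ transparent. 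For continuity of $\JOp_{C,p}f$ in $u$, the paper invokes dominated convergence rather briefly (and a dominating function uniform in $u$ near a fixed base point is not immediate), whereas you transport the base point by a local continuous section of $\mathrm{U}(n)\to\unitsurf^{2n-1}$ and use uniform continuity of $f$ on the compact sphere together with the invariance of the kernel and measure; this is precisely the mechanism the paper itself employs later for equicontinuity in Proposition~\ref{prop:JOpJointlyCont}, and arguably your version is the cleaner way to establish continuity already at this stage.
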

\begin{proof}
 First note that since $0 \in \relint C$, we have $h_C \geq 0$ and that $h_C(z) = 0$ if and only if $z$ is orthogonal to $\linspan^\RR C$. Hence, $h_{C}^p(v\cdot u)$ is well-defined and positive for all $v \in \unitsurf^{2n-1}$ that are not contained in the (proper) subspace defined by $v \cdot u \in (\linspan^\RR C)^{\perp_\RR}$, that is, for almost all $v \in \unitsurf^{2n-1}$, and we will interpret the integral in \eqref{eq:defJOp} accordingly. This readily implies that (assuming it is well-defined) $\JOp_{C,p} f$ is positive whenever $f$ is positive. 

 Next, we distinguish the cases $\dim C = 2$ and $\dim C = 1$. In the first case, $\dim C = 2$, since $0 \in \interior C$, there exist constants $d, D > 0$ such that $d \DD \subseteq C \subseteq D \DD$. A direct estimate then shows that 
 \begin{align*}
  |h_C(v \cdot u)^p f(v)| \leq \|f\|_\infty \max\{d^p, D^p\} h_\DD(v \cdot u)^p,
 \end{align*}
 that is, by dominated convergence, $\JOp_{C,p} f$ is well-defined and continuous whenever $v \mapsto h_\DD(v \cdot u)^p = |v \cdot u|^p$ is integrable with respect to the spherical Lebesgue measure on $\unitsurf^{2n-1}$ for some (and by invariance then every) $u \in \unitsurf^{2n-1}$. For this reason, let $u \in \unitsurf^{2n-1}$ be arbitrary and compute using polar coordinates (in $\CC^n$) and \eqref{eq:propComplParSec},
 \begin{align*}
  \int_{\unitsurf^{2n-1}} |v \cdot u|^p dv = (2n+p) \int_{B^{2n}} |x \cdot u|^p dx = (2n+p) \int_\CC |z|^p \CompParSec{B^{2n}}{u}(z) dz.
 \end{align*}
 As $\CompParSec{B^{2n}}{u}$ is bounded by some $M > 0$ and has compact support contained in some ball $R\DD$, $R>0$, both uniformly in $u$, the latter integral can be estimated, using polar coordinates (in $\CC$), by
 \begin{align*}
  (2n+p) \int_\CC |z|^p \CompParSec{B^{2n}}{u}(z) dz \leq (2n+p) M \int_{R\DD} |z|^p dz = (2n+p)M 2\pi \int_0^R r^{p+1} dr,
 \end{align*}
 which is finite since $p + 1 > -\dim C + 1 = -1$. We conclude that $\JOp_{C,p} f \in C(\unitsurf^{2n-1})$, whenever $\dim C = 2$.

 In the second case, $\dim C = 1$, there exists an origin-symmetric interval $I\subseteq \CC$ and constants $d, D > 0$ such that $d I \subseteq C \subseteq DI$, which reduces the claim to a similar calculation as in the previous case.
 
 Finally, Lipschitz-continuity of $\JOp_{C,p}$ follows by a direct estimate, the Lipschitz constant is given by $\|\JOp_{C,p} 1\|_\infty$.
\end{proof}
\medskip

\noindent
Indeed, the operators $\JOp_{C,p}f$ are jointly continuous in $C$ and $f$. Before stating and proving this explicitly, we give two technical lemmas required in the proof.
\begin{lemma}\label{lem:CalcJOp1}
 Suppose that $C \in \convexbodies(\CC)$ with $C \neq \{0\}$ and $0 \in \relint C$, and let $p > -\dim C$. Then there exists $c(n,p) > 0$ such that
 \begin{align*}
  (\JOp_{C,p} 1)(u) = c(n,p) \int_{\unitsurf^1} h_C(v)^p dv, \quad u \in \unitsurf^{2n-1}.
 \end{align*}
\end{lemma}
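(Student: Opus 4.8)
The plan is to compute $(\JOp_{C,p}1)(u)$ explicitly and to read off the constant. First I would note that, by unitary invariance of the spherical Lebesgue measure on $\unitsurf^{2n-1}$ together with the invariance $(\Phi v)\cdot(\Phi u) = v\cdot u$ of the Hermitian pairing under $\Phi \in \mathrm{U}(n)$, a change of variables shows that $\int_{\unitsurf^{2n-1}} h_C(v\cdot u)^p\,dv$ is the same for all $u \in \unitsurf^{2n-1}$; thus it suffices to treat one fixed $u$, and the claimed independence of $u$ follows.

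Next, since $h_C(x\cdot u)^p = |x|^p\,h_C((x/|x|)\cdot u)^p$ is $p$-homogeneous in $x$, passing to polar coordinates in $\CC^n$ gives, exactly as in the proof of Lemma~\ref{lem:JOpWellDef},
\begin{align*}
 (\JOp_{C,p}1)(u) = (2n+p)\int_{B^{2n}} h_C(x\cdot u)^p\,dx = (2n+p)\int_{\CC} h_C(z)^p\,\CompParSec{B^{2n}}{u}(z)\,dz,
\end{align*}
the last equality being \eqref{eq:propComplParSec} with $\varphi = h_C^p$ (the value of $\varphi$ on the null set where it is infinite being irrelevant, in accordance with the convention fixed in Lemma~\ref{lem:JOpWellDef}). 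I would then compute the complex parallel section function of the unit ball: the affine complex hyperplane $\{x \in \CC^n : x\cdot u = z\}$ has foot of perpendicular $zu$ at Euclidean distance $|z|$ from the origin, so $B^{2n}\cap\{x\cdot u = z\}$ is a $(2n-2)$-dimensional Euclidean ball of radius $\sqrt{1-|z|^2}$ for $z \in \DD$ and empty otherwise; hence $\CompParSec{B^{2n}}{u}(z) = \kappa_{2n-2}(1-|z|^2)^{n-1}\mathbbm{1}_{\DD}(z)$.

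Finally, I would introduce polar coordinates $z = rw$, $r \in [0,1]$, $w \in \unitsurf^1$, in $\CC \cong \RR^2$, so that $dz = r\,dr\,dw$, and use the $1$-homogeneity of $h_C$ to write $h_C(z)^p = r^p h_C(w)^p$. This separates the integral into
\begin{align*}
 (\JOp_{C,p}1)(u) = (2n+p)\kappa_{2n-2}\left(\int_0^1 r^{p+1}(1-r^2)^{n-1}\,dr\right)\left(\int_{\unitsurf^1} h_C(w)^p\,dw\right),
\end{align*}
which is the asserted identity with $c(n,p) = (2n+p)\kappa_{2n-2}\int_0^1 r^{p+1}(1-r^2)^{n-1}\,dr$. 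The $r$-integral equals $\tfrac12 B(\tfrac{p}{2}+1,n)$ after the substitution $s = r^2$, hence is finite and strictly positive because $p+1 > -\dim C + 1 \geq -1$; thus $c(n,p) > 0$. There is essentially no obstacle here: the only point requiring a little care is that when $\dim C = 1$ the support function $h_C$ vanishes on a real hyperplane, so $h_C^p$ is infinite on a proper subspace, but that set is null for every measure appearing above and the computation goes through verbatim with the integrals interpreted as in Lemma~\ref{lem:JOpWellDef}.
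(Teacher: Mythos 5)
Your proposal is correct and follows essentially the same route as the paper: pass to polar coordinates in $\CC^n$, rewrite the integral via the complex parallel section function $\CompParSec{B^{2n}}{u}$ using \eqref{eq:propComplParSec}, then use polar coordinates in $\CC$ together with the rotational invariance of $\CompParSec{B^{2n}}{u}$ to split off the factor $\int_{\unitsurf^1} h_C(v)^p\,dv$. The only difference is cosmetic: you compute $\CompParSec{B^{2n}}{u}(z) = \kappa_{2n-2}(1-|z|^2)^{n-1}\mathbbm{1}_\DD(z)$ and the resulting Beta integral explicitly, whereas the paper leaves the radial factor as $\int_0^\infty r^{p+1}\CompParSec{B^{2n}}{u}(r)\,dr$.
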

\begin{proof}
 A direct calculation using polar coordinates and \eqref{eq:propComplParSec} for the complex parallel section function $\CompParSec{B^{2n}}{u}$ yields for $u \in \unitsurf^{2n-1}$,
 \begin{align*}
  (\JOp_{C,p}1)(u) &= (2n+p) \int_{B^{2n}}h_C(x \cdot u)^p dx = (2n+p) \int_\CC h_C(z)^p \CompParSec{B^{2n}}{u}(z) dz \\
  &= (2n+p) \int_{\unitsurf^1} h_C(v)^p \int_0^\infty r^{p+1} \CompParSec{B^{2n}}{u}(rv) dr dv \\
  &= (2n+p)\int_0^\infty r^{p+1} \CompParSec{B^{2n}}{u}(r) dr \int_{\unitsurf^1} h_C(v)^p dv,
 \end{align*}
 where we used that $\CompParSec{B^{2n}}{u}(rv) = \CompParSec{B^{2n}}{u}(r)$ by the $\unitsurf^1$-invariance of $B^{2n}$.
\end{proof}

\noindent
In the following lemma, convergence of convex bodies is, as always, in the Hausdorff-topology, that is, uniform convergence on $\unitsurf^{2n-1}$ of support functions.

\begin{lemma}\label{lem:ConvSeqDualMixVolUnifBnd}
 Suppose that $(C_j)_{j \in \NN} \subseteq \convexbodiesO(\CC)$ converges to $C_0 \in \convexbodies(\CC)$, with $C_0 \neq\{0\}$ and $0 \in \relint C_0$, and let $p>-\min\{\dim C_j: j=0, 1, \dots\}$ be non-zero. Then there exists $M > 0$ such that
 \begin{align}\label{eq:lemConvSeqDualMixVolUnifBnd}
  \int_{\unitsurf^1} h_{C_j}(u)^p du < M, \quad j \in \NN.
 \end{align}
\end{lemma}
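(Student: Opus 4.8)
The plan is to reduce the uniform bound on $\int_{\unitsurf^1} h_{C_j}(u)^p\,du$ to a uniform bound near the (at most two) directions where $h_{C_0}$ vanishes, handling separately the cases $p>0$ and $-\dim C_0 < p < 0$. Since $C_j \to C_0$ in the Hausdorff metric, the support functions $h_{C_j}$ converge uniformly to $h_{C_0}$ on $\unitsurf^1$; in particular there is a uniform upper bound $h_{C_j} \le R$ for all $j$ and some $R>0$. When $p>0$ this already gives $\int_{\unitsurf^1} h_{C_j}(u)^p\,du \le 2\pi R^p =: M$, and we are done. The content is therefore entirely in the case $p<0$, where the integrand blows up precisely where $h_{C_j}$ is small, i.e.\ near the annihilator of $\linspan^\RR C_0$.

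For $p<0$ we split into $\dim C_0 = 2$ and $\dim C_0 = 1$. If $\dim C_0 = 2$ then $0 \in \interior C_0$, so there is $d>0$ with $d\DD \subseteq C_0$, hence $h_{C_0} \ge d$ on $\unitsurf^1$; by uniform convergence $h_{C_j} \ge d/2$ for all large $j$, and the finitely many remaining $C_j$ also contain a disk about the origin since $C_j \in \convexbodiesO(\CC)$, so $h_{C_j} \ge d'$ for a uniform $d'>0$ and $\int_{\unitsurf^1} h_{C_j}(u)^p\,du \le 2\pi (d')^p$. If $\dim C_0 = 1$, write $L = \linspan^\RR C_0$, a line through the origin spanned by a unit vector $w$, and let $u = (\cos\theta) w + (\sin\theta) w^\perp$. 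Then $h_{C_0}(u) = h_{C_0}(w)\,|\cos\theta|_+ + h_{C_0}(-w)\,|\cos\theta|_-$ roughly behaves like $a|{\cos\theta}|$ near $\theta = \pm\pi/2$ for some $a>0$ (using $0\in\relint C_0$, so $h_{C_0}(\pm w)>0$), and $\int |\cos\theta|^p\,d\theta$ converges exactly when $p>-1 = -\dim C_0$, which holds by hypothesis. The key point is to make this estimate uniform in $j$: by Hausdorff convergence the bodies $C_j$ are eventually trapped between $C_0 - \varepsilon\DD$ and $C_0 + \varepsilon\DD$, so $h_{C_j}(u) \ge h_{C_0}(u) - \varepsilon \ge a|{\cos\theta}| - \varepsilon$, which is not good enough on its own near $\theta=\pm\pi/2$; instead I would use that $C_j \supseteq d_j I$ for an interval $I \subseteq L$ together with the fact that $C_j$, being close to a body with nonempty relative interior in $L$, contains a fixed nondegenerate segment along $L$, giving $h_{C_j}(u) \ge a' |\langle u, w\rangle| = a'|{\cos\theta}|$ with $a'>0$ uniform in $j$. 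Then $\int_{\unitsurf^1} h_{C_j}(u)^p\,du \le (a')^p \int_0^{2\pi} |{\cos\theta}|^p\,d\theta =: M$, finite since $p > -1$.

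The main obstacle is the $\dim C_0 = 1$, $p<0$ case: one must extract a uniform (in $j$) linear lower bound $h_{C_j}(u) \ge a'|\langle u, w\rangle|$ near the two problematic directions, rather than merely a $j$-dependent one. I would obtain this by noting that Hausdorff convergence $C_j \to C_0$ forces, for $j$ large, $C_j$ to contain the segment $[-\tfrac12 r_0 w, \tfrac12 r_0 w]$ where $[-r_0 w, r_0 w] \subseteq \relint_L C_0$ (possible since $0 \in \relint C_0$ and $\dim C_0 = 1$), because any point of $\relint_L C_0$ is a Hausdorff-limit obstruction; and for the finitely many small $j$ we use $C_j \in \convexbodiesO(\CC) \subseteq \convexbodies(\CC)$ with $0$ in the interior (so certainly containing a nondegenerate segment through the origin in direction $w$). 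Taking $a' = \tfrac12 r_0 \wedge \min_{j \text{ small}} (\text{half-width of } C_j \text{ along } w)>0$ yields the desired uniform bound, and the convergence of $\int_0^{2\pi}|{\cos\theta}|^p\,d\theta$ for $p>-1$ completes the proof. A symmetric remark handles the possibility that $C_0$ is not symmetric about the origin along $L$: one still has $h_{C_0}(u) \ge c\,|\langle u,w\rangle|$ with $c = \min\{h_{C_0}(w), h_{C_0}(-w)\} > 0$, and the argument goes through verbatim.
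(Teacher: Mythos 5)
The case split and the two non-critical cases ($p>0$, and $p<0$ with $\dim C_0 = 2$) are correct and essentially match the paper. The gap is precisely in the case you yourself flagged as the ``main obstacle'': the claim that Hausdorff convergence $C_j \to C_0$ together with $0 \in \interior C_j$ forces $C_j$, for large $j$, to contain a fixed nondegenerate segment $[-\tfrac12 r_0 w, \tfrac12 r_0 w]$ through the origin along $\linspan^\RR C_0$ is false. Hausdorff convergence only produces points of $C_j$ near the endpoints of $C_0$, not a chord of $C_j$ passing through the origin. Concretely, with $w=(1,0)$ and $C_0 = [-1,1]\times\{0\}$, take
\[
 C_j \;=\; \mathrm{conv}\{(1,\,1/j),\ (-1,\,1/j),\ (0,\,-1/j^2)\}.
\]
Then $0 \in \interior C_j$, so $C_j \in \convexbodiesO(\CC)$, and $d_H(C_j,C_0)\le 1/j \to 0$, yet $C_j \cap \RR w = [-\tfrac{1}{j+1}w,\,\tfrac{1}{j+1}w]$ shrinks to $\{0\}$. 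Consequently no bound $h_{C_j}(u) \ge a'\,|\langle u,w\rangle|$ with $a'>0$ independent of $j$ can hold; indeed near $u = (0,-1)$ one has $h_{C_j}(u) \approx 1/j^2$ on an arc of angular length $\approx 1/j$, giving $\int_{\unitsurf^1} h_{C_j}(u)^p\,du \gtrsim j^{-1-2p}\to\infty$ for $-1 < p < -1/2$. So the failure here is not merely in the proposed estimate: the desired uniform bound itself fails for this sequence.

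For comparison, the paper's argument replaces $C_j$ by the symmetrization $C_j\cap(-C_j)$, asserts that $C_j\cap(-C_j) \to C_0\cap(-C_0)$ in the Hausdorff metric, and then tries to extract a single origin-symmetric segment $[-\tfrac{d_0}{2}\xi,\tfrac{d_0}{2}\xi]$ contained in every $C_j$. Your proposal skips the symmetrization and argues directly with $C_j$, which is more elementary; but both routes ultimately hinge on producing a $j$-independent segment through the origin, and the example above is an obstruction to either one (in it, $C_j\cap(-C_j)$ also shrinks to $\{0\}$ even though $C_0\cap(-C_0)=C_0$ is a nondegenerate segment, because the intersection of convex bodies is not Hausdorff-continuous when the limit intersection has empty interior). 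The upshot is that the argument, in the form you gave it, does not close the $\dim C_0 = 1$, $p<0$ case, and a genuinely different mechanism for controlling the small values of $h_{C_j}$ (or extra hypotheses on the sequence $(C_j)$) would be needed.
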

\begin{proof}
 First note that the integral in \eqref{eq:lemConvSeqDualMixVolUnifBnd} is always finite as the case $n=1$ of the previous Lemma~\ref{lem:JOpWellDef} shows, that is, it remains to show that the integral can be uniformly bounded when $j$ is large enough.
 
 If $\dim C_0 = 2$, there exist $a, b > 0$ such that $a\DD \subseteq C_j, C_0 \subseteq b \DD$, and a direct estimate shows the claim. We are therefore left to prove the claim for $\dim C_0 = 1$.
 To this end, observe that the convergence $C_j \to C_0$ implies that $C_j \cap (-C_j) \to C_0 \cap (-C_0)$, as $j \to \infty$, and let $2d_0$ be the length of the maximal, origin-symmetric interval that is contained in $C_0 \cap (-C_0)$. As $0 \in \relint C_0$, $d_0 > 0$. Since 
 \begin{align*}
  d_0 = \max_{u \in \unitsurf^1} h_{C_0 \cap (-C_0)}(u),
 \end{align*}
 the convergence of $C_j\cap(-C_j)$ implies that for $j$ sufficiently large, every $C_j \cap (-C_j)$ (and thus every $C_j$) contains an origin symmetric interval of length greater or equal $2d_0 - d_0 = d_0$.
 
 Moreover, since $C_j$ is a convergent sequence, there exists $D > 0$ such that $C_j \subseteq D\DD$ for all $j \in \NN$. Consequently, we have shown that, for every $j$ large enough there exists $\xi_j \in \unitsurf^1$, such that $[-\frac{d_0}{2} \xi_j, \frac{d_0}{2} \xi_j] \subseteq C_j \subseteq D \DD$, which implies that,
 \begin{align}\label{eq:bndSuppFcthochP}
  h_{C_j}(u)^p \leq \max\left\{\left(\frac{d_0}{2}\right)^p|\langle\xi_j, u\rangle|^p, D^p|u|^p\right\}, \quad u \in \unitsurf^1.
 \end{align}
 Therefore the claim follows from the integrability of $|\cdot|^p$ and the fact that the (finite) integral of $|\langle w, \cdot\rangle|^p$ does not depend on the choice of $w \in \unitsurf^{2n-1}$.
\end{proof}

\noindent We are now in a position to prove the aforementioned joint continuity of $\JOp_{C,p}$.

\begin{proposition}\label{prop:JOpJointlyCont}
 Suppose that $p>-2$. Then the map
 \begin{align*}
  \JOp: \{ C \in \convexbodies(\CC): C \neq \{0\}, 0 \in \relint C, \dim C > -p \} \times C(\unitsurf^{2n-1}) \to C(\unitsurf^{2n-1}),
 \end{align*}
 defined by $(C,f) \mapsto \JOp_{C,p} f$, is jointly continuous.
\end{proposition}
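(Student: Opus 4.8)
The plan is to reduce to sequential continuity: since $\convexbodies(\CC)$ (with the Hausdorff metric) and $C(\unitsurf^{2n-1})$ (with the supremum norm) are metric spaces, it suffices to fix sequences $C_j \to C_0$ inside the domain and $f_j \to f_0$ in $C(\unitsurf^{2n-1})$ and show $\JOp_{C_j,p}f_j \to \JOp_{C_0,p}f_0$ uniformly on $\unitsurf^{2n-1}$. I would split
\[
	\|\JOp_{C_j,p}f_j - \JOp_{C_0,p}f_0\|_\infty \leq \|\JOp_{C_j,p}f_j - \JOp_{C_j,p}f_0\|_\infty + \|\JOp_{C_j,p}f_0 - \JOp_{C_0,p}f_0\|_\infty.
\]
The first summand is at most $\|\JOp_{C_j,p}1\|_\infty\|f_j - f_0\|_\infty$ by the Lipschitz estimate in Lemma~\ref{lem:JOpWellDef}, and $\|\JOp_{C_j,p}1\|_\infty = c(n,p)\int_{\unitsurf^1}h_{C_j}^p\,dv$ by Lemma~\ref{lem:CalcJOp1}, which is bounded uniformly in $j$ by (the proof of) Lemma~\ref{lem:ConvSeqDualMixVolUnifBnd} applied to the convergent sequence $(C_j)$; hence the first summand tends to $0$. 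Thus everything reduces to showing $\|\JOp_{C_j,p}f_0 - \JOp_{C_0,p}f_0\|_\infty \to 0$.

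For the second summand, the key observation is that by $\mathrm{U}(n)$-invariance of the spherical Lebesgue measure, the integral $\int_{\unitsurf^{2n-1}}|h_{C_j}(v\cdot u)^p - h_{C_0}(v\cdot u)^p|\,dv$ does \emph{not} depend on $u$: given $u$, pick $\phi\in\mathrm{U}(n)$ with $\phi u_0 = u$ for a fixed $u_0$ and substitute $v \mapsto \phi v$, using $(\phi v)\cdot(\phi u_0) = v\cdot u_0$. Consequently $\|\JOp_{C_j,p}f_0 - \JOp_{C_0,p}f_0\|_\infty \leq \|f_0\|_\infty\int_{\unitsurf^{2n-1}}|h_{C_j}(v\cdot u_0)^p - h_{C_0}(v\cdot u_0)^p|\,dv$, and I would process this single integral exactly as in the proof of Lemma~\ref{lem:CalcJOp1}: passing to polar coordinates in $\CC^n$ and then using the complex parallel section function $\CompParSec{B^{2n}}{u_0}$ of $B^{2n}$ (all integrands nonnegative, so Tonelli applies, and finiteness follows from Lemma~\ref{lem:JOpWellDef} via $|a^p-b^p|\leq a^p+b^p$), it equals $(2n+p)\int_{\CC}|h_{C_j}(z)^p - h_{C_0}(z)^p|\CompParSec{B^{2n}}{u_0}(z)\,dz$. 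Since $\CompParSec{B^{2n}}{u_0}$ is bounded by $\kappa_{2n-2}$ and supported in $\DD$, polar coordinates in $\CC$ and the $p$-homogeneity of $z\mapsto h_C(z)^p$ bound this by $\tfrac{(2n+p)\kappa_{2n-2}}{p+2}\int_{\unitsurf^1}|h_{C_j}(\omega)^p - h_{C_0}(\omega)^p|\,d\omega$, which is finite since $p>-2$.

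It then remains to prove $\int_{\unitsurf^1}|h_{C_j}(\omega)^p - h_{C_0}(\omega)^p|\,d\omega \to 0$, which I would do by dominated convergence. Hausdorff convergence $C_j\to C_0$ gives uniform convergence $h_{C_j}\to h_{C_0}$ on $\unitsurf^1$, hence $h_{C_j}(\omega)^p\to h_{C_0}(\omega)^p$ for every $\omega$ with $h_{C_0}(\omega)>0$, i.e.\ for almost every $\omega$; and a $j$-uniform integrable majorant for $h_{C_j}^p$ (for $j$ large) is supplied by the case analysis in the proof of Lemma~\ref{lem:ConvSeqDualMixVolUnifBnd} — a constant if $\dim C_0 = 2$, and $\max\{(d_0/2)^p|\langle\xi,\cdot\rangle|^p, D^p|\cdot|^p\}$ (integrable, as then necessarily $p>-1$) if $\dim C_0 = 1$. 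Adding $h_{C_0}^p$ to this majorant dominates $|h_{C_j}^p - h_{C_0}^p|$ uniformly in $j$, so dominated convergence yields the limit $0$, completing the proof.

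The step that really needs the right idea is obtaining uniformity in $u$ for the second summand; the $\mathrm{U}(n)$-equivariance collapses the whole estimate to a single integral over $\unitsurf^1$ that no longer sees $u$, after which the argument is a routine dominated-convergence computation built on the lemmas already established. A secondary, purely technical point — worth stating but not worth dwelling on — is that $h_C(z)^p$ is singular at $h_C(z)=0$ when $p<0$, so every Fubini/polar-coordinate manipulation must be carried out with nonnegative integrands (Tonelli) and its finiteness read off from Lemma~\ref{lem:JOpWellDef}.
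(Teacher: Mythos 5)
Your proof is correct, and it diverges from the paper's in a genuinely interesting way. Both arguments begin with the same triangle-inequality split and the same Lipschitz bound $\|\JOp_{C_j,p}f_j - \JOp_{C_j,p}f_0\|_\infty \leq \|\JOp_{C_j,p}1\|_\infty\|f_j-f_0\|_\infty$, with the $j$-uniform bound on $\|\JOp_{C_j,p}1\|_\infty$ supplied by Lemmas~\ref{lem:CalcJOp1} and \ref{lem:ConvSeqDualMixVolUnifBnd}. The difference is in how uniformity in $u$ is achieved for the second summand. The paper first proves \emph{pointwise} convergence $\JOp_{C_j,p}f_j(u) \to \JOp_{C,p}f(u)$ by dominated convergence, then upgrades to uniform convergence via Arzel\`a--Ascoli, using $\mathrm{U}(n)$-equivariance only to establish equicontinuity of the family, and finishes with a subsequence argument. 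You instead deploy the $\mathrm{U}(n)$-invariance of the spherical measure one step earlier and to greater effect: it makes $\int_{\unitsurf^{2n-1}}|h_{C_j}(v\cdot u)^p - h_{C_0}(v\cdot u)^p|\,dv$ manifestly independent of $u$, so the supremum over $u$ collapses to a single integral and uniform convergence follows directly from dominated convergence, with no compactness argument needed. The further reduction to $\tfrac{(2n+p)\kappa_{2n-2}}{p+2}\int_{\unitsurf^1}|h_{C_j}(\omega)^p - h_{C_0}(\omega)^p|\,d\omega$ is carried out correctly (the $p$-homogeneity of $z\mapsto|h_{C_j}(z)^p - h_{C_0}(z)^p|$ licenses the polar-coordinate/parallel-section step, and the nonnegative integrand lets Tonelli do the work, with finiteness from Lemma~\ref{lem:JOpWellDef}); the majorant on $\unitsurf^1$ is exactly the one from the proof of Lemma~\ref{lem:ConvSeqDualMixVolUnifBnd}, and your observation that $\dim C_0 = 1$ forces $p>-1$, hence integrability of the majorant, is the correct point to verify. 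In short: same ingredients, but you use the group-invariance to bypass Arzel\`a--Ascoli, giving a shorter and more self-contained argument.
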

\begin{proof}
 Suppose that $C_j \to C$, for $\{0\} \neq C_j, C \in \convexbodies(\CC)$ with $0 \in \relint C_j, C$ and $\dim C_j, C > -p$, and that $f_j \to f$ uniformly, $f_j,f \in C(\unitsurf^{2n-1})$, as $j \to \infty$. We need to show that $\JOp_{C_j,p} f_j \to \JOp_{C,p} f$ uniformly on $\unitsurf^{2n-1}$ as $j \to \infty$. To this end, we will first show pointwise convergence of $\JOp_{C_j,p} f_j$ and then use the Arzel\`a-Ascoli theorem to deduce uniform convergence.
 
 Therefore, letting $u \in \unitsurf^{2n-1}$, a direct estimate yields
 \begin{align*}
  |(\JOp_{C_j,p} f_j - \JOp_{C,p}f)(u)| &\leq |(\JOp_{C_j,p} f_j - \JOp_{C_j,p}f)(u)| + |(\JOp_{C_j,p}f - \JOp_{C,p}f)(u)| \\
                                        &\leq \|f_j - f\|_\infty |(\JOp_{C_j,p} 1)(u)| + \|f\|_\infty \int_{\unitsurf^{2n-1}}\!\!\! |(h_{C_j}^p - h_C^p)(v \cdot u)| dv.
 \end{align*}
 By Lemmas~\ref{lem:CalcJOp1} and \ref{lem:ConvSeqDualMixVolUnifBnd} the first term on the right-hand side is bounded by \linebreak$M' \|f_j - f\|_\infty$, where $M'>0$ is some constant independent of $j$. Moreover, arguing as in the proof of Lemma~\ref{lem:ConvSeqDualMixVolUnifBnd}, see \eqref{eq:bndSuppFcthochP}, the integrand in the second term has an integrable majorant. The uniform convergence of $f_j$ and dominated convergence therefore imply that $\JOp_{C_j,p} f_j(u) \to \JOp_{C,p} f(u)$.
 
 Next, since $(\JOp_{C_j, p} f_j)(u)$ is convergent for every $u \in \unitsurf^{2n-1}$, the sequence is uniformly bounded, that is, the family $(\JOp_{C_j,p} f_j)_{j \in \NN}$ is pointwise bounded. In order to show equicontinuity, fix some arbitrary $u \in \unitsurf^{2n-1}$ and let $\eta \in \mathrm{U}(n)$ be a uni\-tary linear map. The invariance of the Lebesgue measure on $\unitsurf^{2n-1}$ then yields,
 \begin{align*}
  (\JOp_{C_j, p}f_j)(\eta u) = \int_{\unitsurf^{2n-1}}h_{C_j}((\eta^{-1} v) \cdot u)^p f_j(v) dv = \int_{\unitsurf^{2n-1}}h_{C_j}(v \cdot u)^p f_j(\eta v) dv.
 \end{align*}
 Letting $\varepsilon > 0$ arbitrary, by the equicontinuity of the $f_j$ on the compact set $\unitsurf^{2n-1}$, there exists an open neighborhood $U$ of the identity in $\mathrm{U}(n)$ such that \linebreak$|f_j(v) - f_j(\eta v)| < \varepsilon$ for all $v \in \unitsurf^{2n-1}$, $\eta \in U$, $j \in \NN$. Consequently, for all $\eta \in U$,
 \begin{align*}
  |(\JOp_{C_j, p}f_j)(u) - (\JOp_{C_j, p}f_j)(\eta u)| \leq \int_{\unitsurf^{2n-1}}\!\!\! h_{C_j}(v \cdot u)^p |f_j (v) - f_j(\eta v)| dv
  \leq \varepsilon |(\JOp_{C_j,p}1)(u)|,
 \end{align*}
 which, by the previous estimate $|(\JOp_{C_j,p}1)(u)| < M'$ (independently of $j$) and since $\{\eta u: \eta \in U\}$ is an open neighborhood of $u \in \unitsurf^{2n-1}$ shows the equicontinuity of the family $(\JOp_{C_j,p} f_j)_{j \in \NN}$.
 
 The Arzel\`a-Ascoli theorem thus implies the existence of a uniformly convergent subsequence $(\JOp_{C_{j_k},p} f_{j_k})_{k \in \NN}$. As the original sequence converges pointwise to $\JOp_{C,p} f$, we obtain $\JOp_{C_{j_k},p} f_{j_k} \to \JOp_{C,p} f$, and a standard argument (that is, starting with an arbitrary subsequence) implies that $\JOp_{C_{j},p} f_{j} \to \JOp_{C,p} f$, which completes the proof.
\end{proof}

\medskip

\noindent
Note that, for $C \in \convexbodiesO(\CC)$, Proposition~\ref{prop:JOpJointlyCont} can be proved directly by showing local Lipschitz-continuity of $\JOp_{C,p} f$ as a function in $C$.

\medskip

\noindent It follows now directly that the complex $L_p$-intersection body body is well defined and continuous.

\begin{corollary}\label{cor:CIntBodyCont}
Suppose that $p>-2$. Then the map
 \begin{align*}
  \IntBody: \{ C \in \convexbodies(\CC): C \neq \{0\}, 0 \in \relint C, \dim C > -p \} \times \starbodiesO(\CC^n) \to \starbodiesO(\CC^n),
 \end{align*}
 defined by $(C,K) \mapsto \IntBody_{C,p} K$, is well-defined and jointly continuous.
\end{corollary}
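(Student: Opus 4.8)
The plan is to deduce everything from the identity~\eqref{eq:IntBodyByJOp}, which expresses the radial function of $\IntBody_{C,p}K$ as a power of $\JOp_{C,p}$ applied to a power of $\rho_K$, and then to invoke Lemma~\ref{lem:JOpWellDef} and Proposition~\ref{prop:JOpJointlyCont} for the $\JOp$-part, together with the elementary continuity of the power maps $t \mapsto t^{2n+p}$ and $t \mapsto t^{-1/p}$ on $(0,\infty)$.

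\emph{Well-definedness.} Fix $C$ in the indicated domain and $K \in \starbodiesO(\CC^n)$. Since $p>-2$ and $n\geq 1$, we have $2n+p>0$, so $\rho_K^{2n+p} \in C(\unitsurf^{2n-1})$ is strictly positive. By Lemma~\ref{lem:JOpWellDef}, $\JOp_{C,p}(\rho_K^{2n+p})$ is then again continuous and strictly positive on $\unitsurf^{2n-1}$, hence so is
\[
\rho_{\IntBody_{C,p}K} = \left(\tfrac{1}{2n+p}\,\JOp_{C,p}(\rho_K^{2n+p})\right)^{-1/p}.
\]
As any strictly positive continuous function on $\unitsurf^{2n-1}$ is the radial function of a (uniquely determined) body in $\starbodiesO(\CC^n)$, this shows $\IntBody_{C,p}K$ is well defined, and \eqref{eq:IntBodyByJOp} is consistent with Definition~\ref{def:compLpIntersectBody}.

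\emph{Joint continuity.} Suppose $C_j \to C$ in the Hausdorff metric (all in the domain) and $K_j \to K$ in the radial metric, i.e.\ $\rho_{K_j} \to \rho_K$ uniformly on $\unitsurf^{2n-1}$. Since the $\rho_{K_j}$ are uniformly bounded above and below by positive constants (being uniformly convergent to the strictly positive continuous function $\rho_K$ on the compact set $\unitsurf^{2n-1}$), and $t \mapsto t^{2n+p}$ is uniformly continuous on the corresponding compact subinterval of $(0,\infty)$, we get $\rho_{K_j}^{2n+p} \to \rho_K^{2n+p}$ uniformly. Proposition~\ref{prop:JOpJointlyCont} then yields $\JOp_{C_j,p}(\rho_{K_j}^{2n+p}) \to \JOp_{C,p}(\rho_K^{2n+p})$ uniformly. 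The limit is strictly positive and continuous on $\unitsurf^{2n-1}$, hence bounded below by some $\delta>0$; by uniform convergence the bound $\delta/2$ holds for all large $j$, so the map $t \mapsto (t/(2n+p))^{-1/p}$ is applied only on a fixed compact subinterval of $(0,\infty)$ on which it is uniformly continuous. Therefore $\rho_{\IntBody_{C_j,p}K_j} \to \rho_{\IntBody_{C,p}K}$ uniformly, which is precisely convergence $\IntBody_{C_j,p}K_j \to \IntBody_{C,p}K$ in $\starbodiesO(\CC^n)$.

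\emph{Main point.} There is no genuine obstacle here: the substance is entirely carried by Lemma~\ref{lem:JOpWellDef} and Proposition~\ref{prop:JOpJointlyCont}, and the corollary is essentially a bookkeeping consequence. The only thing that requires (routine) care is that pre- and post-composition with the nonlinear power maps preserves uniform convergence; this uses compactness of $\unitsurf^{2n-1}$ together with the strict positivity of the relevant continuous limit functions to confine all functions, eventually, to a common compact subinterval of $(0,\infty)$ on which the power maps are uniformly continuous.
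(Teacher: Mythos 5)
Your proof is correct and follows essentially the same route as the paper: both deduce the corollary from \eqref{eq:IntBodyByJOp}, Lemma~\ref{lem:JOpWellDef}, and Proposition~\ref{prop:JOpJointlyCont}, together with the regularity of the power maps $t\mapsto t^{2n+p}$ and $t\mapsto t^{-1/p}$ on $(0,\infty)$. The only cosmetic difference is that the paper invokes local Lipschitz continuity of those maps, whereas you use uniform continuity on a compact subinterval of $(0,\infty)$ — an equivalent device for the same purpose.
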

\begin{proof}
 This follows directly from \eqref{eq:IntBodyByJOp}, Lemma~\ref{lem:JOpWellDef}, Proposition~\ref{prop:JOpJointlyCont} and the fact that the maps $t \mapsto t^{2n+p}$ and $t \mapsto t^{-1/p}$ are locally Lipschitz-continuous for $t>0$. Note that Lemma~\ref{lem:JOpWellDef} asserts that $(\JOp_{C,p}\rho_K^{2n+p})^{-1/p}$ is positive and continuous and therefore a radial function of a star body in $\starbodiesO(\CC^n)$.
\end{proof}

\medskip
Note that the proofs of Lemma~\ref{lem:JOpWellDef} and Corollary~\ref{cor:CIntBodyCont} imply that for fixed $C \in \convexbodies(\CC)$ and non-zero $p>-\dim C$, the operator $\IntBody_{C,p}:\starbodiesO(\CC^n) \to \starbodiesO(\CC^n)$ is locally Lipschitz-continuous.

\medskip

In view of its importance for the real $L_p$-intersection body (see \cites{Haberl2006,Ludwig2006}), we close the section with the following corresponding property for complex $L_p$-intersection bodies. The proof is a direct computation and will be omitted.
\begin{lemma}
 Suppose that $C \in \convexbodies(\CC)$ contains the origin in its relative interior and let $p>-\dim C$ be non-zero. Then $\IntBody_{C,p}:\starbodiesO(\CC^n) \to \starbodiesO(\CC^n)$ is a $\GL(n,\CC)$-contravariant valuation with respect to $L_{-p}$-radial addition, that is,
 \begin{align*}
  \rho_{\IntBody_{C,p} (K \cup L)}^{-p} + \rho_{\IntBody_{C,p} (K \cap L)}^{-p} = \rho_{\IntBody_{C,p} (K)}^{-p} + \rho_{\IntBody_{C,p} (L)}^{-p}, \quad K, L \in \starbodiesO(\CC^n),
 \end{align*}
 and 
 \begin{align*}
  \IntBody_{C,p}(\Theta K) = |\det \Theta|^{-2/p} \Theta^{-\ast} \IntBody_{C,p}(K), \quad K \in \starbodiesO(\CC^n), \Theta \in \GL(n,\CC),
 \end{align*}
 where $\Theta^{-\ast} = (\Theta^\ast)^{-1}$ denotes the inverse of the Hermitian adjoint $\Theta^\ast = \overline{\Theta}^T$.
\end{lemma}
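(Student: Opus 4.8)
The plan is to derive both statements directly from the defining identity \eqref{eq:defCompLpIntersectBody}, $\rho_{\IntBody_{C,p}K}(u)^{-p}=\int_K h_{Cu}(x)^p\,dx$. First I would record that, by \eqref{eq:IntBodyByJOp} together with Lemma~\ref{lem:JOpWellDef} (or its proof, applied with a general star body in place of $B^{2n}$), this integral is finite for every $K\in\starbodiesO(\CC^n)$ and every $u\in\unitsurf^{2n-1}$, the integrand $x\mapsto h_{Cu}(x)^p$ being non-negative and locally integrable on $\CC^n$ since $p>-\dim C$.

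\textbf{The valuation property.} If $K,L\in\starbodiesO(\CC^n)$, then $\rho_{K\cup L}=\max\{\rho_K,\rho_L\}$ and $\rho_{K\cap L}=\min\{\rho_K,\rho_L\}$ are again continuous and strictly positive, so $K\cup L,K\cap L\in\starbodiesO(\CC^n)$ and the left-hand side of the asserted identity is meaningful. Multiplying the pointwise identity of indicator functions $\mathbbm{1}_{K\cup L}+\mathbbm{1}_{K\cap L}=\mathbbm{1}_K+\mathbbm{1}_L$ on $\CC^n$ by $h_{Cu}(x)^p$ and integrating (all four integrals being finite, as each body is bounded) yields the claimed identity for each fixed $u\in\unitsurf^{2n-1}$; nothing beyond linearity of the integral is needed.

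\textbf{The contravariance.} Fix $\Theta\in\GL(n,\CC)$ and $u\in\unitsurf^{2n-1}$. Since the defining integral extends verbatim to all $u\in\CC^n\setminus\{0\}$ and $u\mapsto\int_K h_C(y\cdot u)^p\,dy$ is positively $p$-homogeneous (because $h_C(y\cdot(\lambda u))=\lambda\,h_C(y\cdot u)$ for $\lambda>0$), the function $\rho_{\IntBody_{C,p}K}$ coincides on $\CC^n\setminus\{0\}$ with the $(-1)$-homogeneous extension of the radial function, and I would work with these extensions throughout. Substituting $x=\Theta y$ in $\int_{\Theta K}h_C(x\cdot u)^p\,dx$ and using that the real Jacobian of the $\CC$-linear map $\Theta$ on $\CC^n\cong\RR^{2n}$ equals $|\det\Theta|^2$ (the complex determinant), together with $(\Theta y)\cdot u=y\cdot(\Theta^{\ast}u)$ for $\Theta^{\ast}=\overline\Theta^T$ — the Hermitian adjoint for the convention $x\cdot(\lambda u)=\overline\lambda(x\cdot u)$ — and \eqref{eq:hCuEqualhCudot}, one obtains $\rho_{\IntBody_{C,p}(\Theta K)}(u)^{-p}=|\det\Theta|^2\,\rho_{\IntBody_{C,p}K}(\Theta^{\ast}u)^{-p}$. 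Applying the general rule $\rho_{\Phi M}(v)=\rho_M(\Phi^{-1}v)$ with $\Phi=\Theta^{-\ast}$, then $\rho_{\lambda M}=\lambda\rho_M$ for $\lambda>0$, and finally raising to the power $-1/p$, gives $\IntBody_{C,p}(\Theta K)=|\det\Theta|^{-2/p}\,\Theta^{-\ast}\IntBody_{C,p}(K)$.

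The only genuinely delicate points are of bookkeeping type: one must carry the homogeneous extensions through the computation, because $\Theta^{\ast}u$ in general does not lie on $\unitsurf^{2n-1}$; and the change-of-variables factor is $|\det\Theta|^2$ rather than $|\det\Theta|$, since we integrate against $2n$-dimensional Lebesgue measure. Once the inner-product convention is used consistently, identifying the adjoint with $\overline\Theta^T$ is routine, and no serious obstacle remains.
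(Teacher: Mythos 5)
Your proof is correct and is, in spelled-out form, precisely the ``direct computation'' the paper invokes: the valuation property follows from $\mathbbm{1}_{K\cup L}+\mathbbm{1}_{K\cap L}=\mathbbm{1}_K+\mathbbm{1}_L$ together with linearity of the integral, and the contravariance from the substitution $x=\Theta y$, the real Jacobian $|\det\Theta|^2$, the adjoint identity $(\Theta y)\cdot u=y\cdot(\Theta^{\ast}u)$, and the homogeneity bookkeeping $\rho_{\Phi M}=\rho_M\circ\Phi^{-1}$ and $\rho_{\lambda M}=\lambda\rho_M$. Nothing is missing, and the approach matches the paper's.
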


\medskip
\section{Proof of Theorem~\ref{mthm:limLpIntBodiesComplex} and Injectivity}
\subsection{Proof of Theorem~\ref{mthm:limLpIntBodiesComplex}}\label{sec:prfThmAConv}
In this section, we will use several results from the previous section to give a proof of Theorem~\ref{mthm:limLpIntBodiesComplex}, that is, to compute the limit of (a normalization of) $\IntBody_{C,p} K$ for $p \to -2^+$, where $C \in \convexbodiesO(\CC)$ and $K \in \starbodiesO(\CC^n)$. To this end, we will first show a similar result for the operator $\JOp_{C,p}$ and then deduce from it Theorem~\ref{mthm:limLpIntBodiesComplex}.

\medskip

A key ingredient of the proof of the statements in this section will be the well-known fact that the familiy of distributions $r^q_+$,
 \begin{align}\label{eq:defDistrtqplus}
  \phi \mapsto \langle r^q_+, \phi \rangle = \int_0^\infty r^q \phi(r) dr
 \end{align}
 is analytic for every $q \in \CC$ with $\Re q >-1$, and admits a meromorphic extension, with poles at $-\NN$ (see, e.g., \cite{Gelfand1964}*{Sec.~3.2}). Consequently,
 \begin{align*}
  \lim_{q\to 0} \int_0^\infty r^{q} \phi(r) dr = \int_0^\infty \phi(r) dr
 \end{align*}
 and, as can be directly checked,
 \begin{align}\label{eq:defDistrtqplusConvMin1}
  \lim_{q\to -1^+} \frac{1}{\Gamma(q+1)}\int_0^\infty r^{q} \phi(r) dr = \phi(0),
 \end{align}
 for every Schwartz function $\phi$ on $\RR$. Moreover, since all distributions $r^q_+$, $\Re q > -1$, and their limit distribution can be applied to continuous functions with compact support, \eqref{eq:defDistrtqplusConvMin1} holds for all $\phi \in C(\RR)$ with compact support (see, e.g., \cite{Hoermander2003}*{Thm.~2.1.8}).
 
 \medskip

\noindent
As the following proposition shows, the normalized operators $\JOp_{C, p}$ converge to a multiple of the \emph{complex spherical Radon transform} $\ComplSphRad$,
\begin{align*}
	(\ComplSphRad f)(u) = \int_{\unitsurf^{2n-1} \cap \{v \cdot u = 0\}} \!\!\!\!\!f(v) dv, \quad u \in \unitsurf^{2n-1},
\end{align*}
where $f \in C(\unitsurf^{2n-1})$.

\begin{proposition}\label{prop:ConvJOpMin2}
 Suppose that $C \in \convexbodiesO(\CC)$. Then there exists $k_C' > 0$ such that $\frac{1}{\Gamma(p+2)}\JOp_{C,p}$ converges to $k_C' \ComplSphRad$ in the strong operator topology, as $p \to -2^+$, that is,
 \begin{align}\label{eq:thmConvJOpMin2}
  \frac{1}{\Gamma(p+2)}\JOp_{C,p} f \to k_C' \ComplSphRad f, \quad p \to -2^+,
 \end{align}
 uniformly on $\unitsurf^{2n-1}$ for every $f \in C(\unitsurf^{2n-1})$.
\end{proposition}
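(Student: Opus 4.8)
The plan is to reduce the assertion to the one-dimensional distributional limit \eqref{eq:defDistrtqplusConvMin1} via polar coordinates, in the spirit of Lemma~\ref{lem:CalcJOp1}. We may assume $n\geq 2$, since for $n=1$ one has $\ComplSphRad f\equiv 0$ while $\JOp_{C,p}f$ stays bounded and $\Gamma(p+2)\to\infty$ as $p\to -2^+$, so the claim is trivial. Fix $f\in C(\unitsurf^{2n-1})$ and $u\in\unitsurf^{2n-1}$. Rewriting the sphere integral in \eqref{eq:defJOp} as an integral over $B^{2n}$ in polar coordinates, applying Fubini's theorem (as in \eqref{eq:propComplParSec}) to the bounded, compactly supported function $x\mapsto f(x/|x|)\mathbbm{1}_{B^{2n}}(x)$, and then using polar coordinates in $\CC$ together with the $1$-homogeneity of $h_C$, I would obtain
\[
 \frac{1}{2n+p}(\JOp_{C,p}f)(u)=\int_{\unitsurf^1}h_C(\omega)^p\left(\int_0^\infty r^{p+1}\Phi_u^f(r\omega)\,dr\right)d\omega,
\]
where $\Phi_u^f(z)=\int_{x\cdot u=z}f(x/|x|)\mathbbm{1}_{B^{2n}}(x)\,dx$.

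The next step is to record the regularity of $\Phi_u^f$ needed here: it is continuous on $\CC$, supported in $\overline{\DD}$, and bounded by $\|f\|_\infty\kappa_{2n-2}$ uniformly in $u$. The only delicate point is continuity at the origin; writing the fiber integral over $u^{\perp,\CC}$ and observing that the domains of integration vary continuously while $\{0\}$ is a null set in $u^{\perp,\CC}\cong\RR^{2n-2}$ (this uses $n\geq 2$), it follows from dominated convergence. Hence $r\mapsto\Phi_u^f(r\omega)$ is continuous and compactly supported for each $\omega\in\unitsurf^1$, so \eqref{eq:defDistrtqplusConvMin1} gives $\frac{1}{\Gamma(p+2)}\int_0^\infty r^{p+1}\Phi_u^f(r\omega)\,dr\to\Phi_u^f(0)$ as $p\to -2^+$. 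Since $0\in\interior C$ forces $h_C\geq d>0$ on $\unitsurf^1$, and since the inner integrals are bounded by $\|f\|_\infty\kappa_{2n-2}/|\Gamma(p+3)|$, i.e. uniformly for $p$ near $-2$, the dominated convergence theorem on $\unitsurf^1$ yields
\[
 \frac{1}{\Gamma(p+2)}\cdot\frac{1}{2n+p}(\JOp_{C,p}f)(u)\longrightarrow \Phi_u^f(0)\int_{\unitsurf^1}h_C(\omega)^{-2}\,d\omega,\qquad p\to -2^+.
\]
Polar coordinates in $u^{\perp,\CC}$ identify $\Phi_u^f(0)=\frac{1}{2n-2}(\ComplSphRad f)(u)$, which together with $\frac{1}{2n+p}\to\frac{1}{2n-2}$ establishes the pointwise version of \eqref{eq:thmConvJOpMin2} with $k_C'=\int_{\unitsurf^1}h_C(\omega)^{-2}\,d\omega>0$.

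Finally, I would upgrade pointwise to uniform convergence exactly as in the proof of Proposition~\ref{prop:JOpJointlyCont}: for any sequence $p_j\to -2^+$, the continuous functions $\frac{1}{\Gamma(p_j+2)}\JOp_{C,p_j}f$ are pointwise bounded by the limit just obtained, and they are equicontinuous, since by the unitary invariance of the spherical measure $|\frac{1}{\Gamma(p+2)}((\JOp_{C,p}f)(u)-(\JOp_{C,p}f)(\eta u))|\leq \varepsilon\,\frac{1}{\Gamma(p+2)}(\JOp_{C,p}1)(u)$ whenever $\eta$ is sufficiently close to the identity in $\mathrm{U}(n)$ (depending only on $\varepsilon$ and the modulus of continuity of $f$), and $\frac{1}{\Gamma(p+2)}(\JOp_{C,p}1)(u)=\frac{c(n,p)}{\Gamma(p+2)}\int_{\unitsurf^1}h_C(v)^p\,dv$ stays bounded as $p\to -2^+$ by Lemma~\ref{lem:CalcJOp1} and one more application of \eqref{eq:defDistrtqplusConvMin1}, now to $r\mapsto\CompParSec{B^{2n}}{u}(r)$. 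Arzel\`a-Ascoli then produces a uniformly convergent subsequence, whose limit must coincide with the pointwise limit $k_C'\ComplSphRad f$, and the standard subsequence argument (as at the end of the proof of Proposition~\ref{prop:JOpJointlyCont}) gives the claim. I expect the main obstacle to be the regularity of $\Phi_u^f$ at the origin and the bookkeeping required to make every dominating bound uniform in $u$ and in $p$ near $-2$; everything else runs parallel to the earlier results of this section.
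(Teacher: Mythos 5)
Your proof is correct and follows the same route as the paper's: polar coordinates and Fubini to write $\frac{1}{2n+p}(\JOp_{C,p}f)(u)$ as $\int_{\unitsurf^1} h_C(\omega)^p\bigl(\int_0^\infty r^{p+1}\Phi_u^f(r\omega)\,dr\bigr)d\omega$, the one-dimensional distributional limit \eqref{eq:defDistrtqplusConvMin1} applied to the parallel-section function $\Phi_u^f(\cdot\,\omega)$, dominated convergence in the $\unitsurf^1$-variable to obtain the pointwise limit $k_C'(\ComplSphRad f)(u)$ with $k_C'=\int_{\unitsurf^1}h_C(\omega)^{-2}d\omega$, and finally Arzel\`a--Ascoli via the equicontinuity argument from Proposition~\ref{prop:JOpJointlyCont}. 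Your explicit dispatch of the degenerate case $n=1$ and your remark that continuity of $\Phi_u^f$ at the origin hinges on $\{0\}$ being a null set in $u^{\perp,\CC}\cong\RR^{2n-2}$ are sound points of care that the paper leaves implicit.
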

\begin{proof}
 Suppose that $C \in \convexbodiesO(\CC)$ and $f \in C(\unitsurf^{2n-1})$. In order to prove \eqref{eq:thmConvJOpMin2}, we will first show that $\frac{1}{\Gamma(p+2)}\JOp_{C,p} f$ converges pointwise on $\unitsurf^{2n-1}$ and then use the Arzel\`a-Ascoli theorem to deduce uniform convergence.
 
 To this end, we use polar coordinates (in $\CC^n$), Fubini's theorem and again polar coordinates (in $\CC$) to rewrite $\JOp_{C,p} f(u)$ for $u \in \unitsurf^{2n-1}$,
 \begin{align*}
  (\JOp_{C,p} f)(u) &= (2n+p) \int_{B^{2n}\setminus\{0\}} h_C(x \cdot u)^p f\left(\frac{x}{\|x\|}\right) dx\\
                    &= (2n+p) \int_\CC h_C(z)^p \int_{x \cdot u = z} f\left(\frac{x}{\|x\|}\right) \mathbbm{1}_{B^{2n}\setminus\{0\}}(x) dx dz\\
                    &= (2n+p) \int_0^\infty r^{p+1} \int_{\unitsurf^1} h_C(v)^p\int_{x \cdot u = rv} f\left(\frac{x}{\|x\|}\right) \mathbbm{1}_{B^{2n}\setminus\{0\}}(x) dx dv dr.
 \end{align*}
 Letting $g_{u,v}(r) = \int_{x \cdot u = rv} f\left(\frac{x}{\|x\|}\right) \mathbbm{1}_{B^{2n}\setminus\{0\}}(x) dx$, and using again Fubini's theorem, we arrive at
 \begin{align}\label{eq:prfConvJOpMin2PreLim}
  (\JOp_{C,p} f)(u) = (2n+p) \int_{\unitsurf^1} h_C(v)^p \int_0^\infty r^{p+1} g_{u,v}(r) dr dv.
 \end{align}
 Next, noting that $g_{u,v}$ is continuous (by dominated convergence) and has compact support, we deduce by \eqref{eq:defDistrtqplusConvMin1},
 \begin{align*}
  \lim_{p \to -2^+} \frac{1}{\Gamma(p+2)} \int_0^\infty r^{p+1} g_{u,v}(r) dr = g_{u,v}(0)
 \end{align*}
 for every $v \in \unitsurf^1$ and $u \in \unitsurf^{2n-1}$. Consequently, the integrand in \eqref{eq:prfConvJOpMin2PreLim}, normalized by $\Gamma(p+2)$, converges pointwise to $h_C(v)^{-2} g_{u,v}(0)$. As there exists $d \in (0,1)$ such that $d\DD \subseteq C$, that is, $h_C(v)^p \leq d^p \leq d^{-2}$ for every $v \in \unitsurf^1$ and $-2<p<0$, and 
 \begin{align*}
  \frac{1}{\Gamma(p+2)}\int_0^\infty r^{p+1} |g_{u,v}(r)| dr &\leq \frac{\|f\|_\infty}{\Gamma(p+2)} \int_0^1 r^{p+1} \int_{x \cdot u = rv} \mathbbm{1}_{B^{2n}\setminus\{0\}}(x) dxdr \\
  &\leq \frac{\|f\|_\infty}{\Gamma(p+2)(p+2)} \kappa_{2n-2} = \frac{\|f\|_\infty}{\Gamma(p+3)} \kappa_{2n-2},
 \end{align*}
 where $\Gamma(p+3)$ is continuous for $p \geq -2$, the integrand in \eqref{eq:prfConvJOpMin2PreLim} is bounded uniformly in $p$. Dominated convergence thus implies that
 \begin{align*}
  \lim_{p \to -2^+}\frac{1}{\Gamma(p+2)}(\JOp_{C,p} f)(u) &= (2n-2) \int_{\unitsurf^1} h_C(v)^{-2} g_{u,v}(0) dv \\
   &= (2n-2) \int_{\unitsurf^1} h_C(v)^{-2} dv \int_{x \cdot u = 0} f\left(\frac{x}{\|x\|}\right) \mathbbm{1}_{B^{2n}\setminus\{0\}}(x) dx.
 \end{align*}
 Letting $k_C' = \int_{\unitsurf^1} h_C(v)^{-2} dv = 2 V_2(C^\circ)$ and using polar coordinates in $x \cdot u = 0$, the latter expression is equal to
 \begin{align*}
  k_C' (2n-2) \int_{\unitsurf^{2n-1} \cap \{v \cdot u = 0\}} \!\!\!\!\!f(v) dv \int_0^1 r^{2n-3} dr = k_C' (\ComplSphRad f)(u),
 \end{align*}
 that is, $\frac{1}{\Gamma(p+2)}(\JOp_{C,p} f)(u) \to k_C' (\ComplSphRad f)(u)$, $u \in \unitsurf^{2n-1}$, $p \to -2^+$, as claimed.

 Next, since $\frac{1}{\Gamma(p+2)}|(\JOp_{C,p} f)(u)|$ is convergent for every $u \in \unitsurf^{2n-1}$, the sequence is bounded, that is, the family $(\frac{1}{\Gamma(p+2)}\JOp_{C,p} f)_{p > -2}$ is pointwise bounded. In order to show equicontinuity, we proceed as in the proof of Proposition~\ref{prop:JOpJointlyCont} to conclude that for every $\varepsilon > 0$ and $u \in \unitsurf^{2n-1}$ there exists an open neighborhood $U$ of $u$ such that
 \begin{align*}
  |(\JOp_{C, p}f)(u) - (\JOp_{C, p}f)(w)| \leq \varepsilon |(\JOp_{C,p}1)(u)|, \quad w \in U.
 \end{align*}
 Hence, since $\frac{1}{\Gamma(p+2)}|(\JOp_{C,p} 1)(u)|$ is convergent (for $p\to -2^+$) and thus bounded, the family $(\frac{1}{\Gamma(p+2)}\JOp_{C,p} f)_{p > -2}$ is equicontinuous.

 The Arzel\`a-Ascoli theorem therefore implies the existence of a uniformly convergent subsequence, which, by pointwise convergence, must converge to $k_C' \ComplSphRad f$. A standard argument, finally, shows the uniform convergence of the whole sequence, which completes the proof.
\end{proof}

\medskip

\noindent Theorem~\ref{mthm:limLpIntBodiesComplex} is now a consequence of Proposition~\ref{prop:ConvJOpMin2}, since, by polar coordinates and $\unitsurf^1$-invariance, the radial function of the complex intersection body $\IntBody_c K$ satisfies
\begin{align}\label{eq:defCIntBodyByComplSphRad}
	\rho_{\IntBody_c K}(u) = \left( \frac{1}{(2n-2)\pi} \ComplSphRad \rho_K^{2n-2}(u) \right)^{1/2}, \quad u \in \unitsurf^{2n-1}.
\end{align}

\medskip

\begin{proof}[Proof of Theorem~\ref{mthm:limLpIntBodiesComplex}]
 First observe that Proposition~\ref{prop:ConvJOpMin2} readily implies that whenever $f_p \to f$ uniformly as $p \to -2^+$, $f_p, f \in C(\unitsurf^{2n-1})$, then $\frac{1}{\Gamma(p+2)}\JOp_{C,p} f_p$ converges uniformly to $k_C' \ComplSphRad f$. Indeed,
 \begin{align*}
  \left\| \frac{\JOp_{C,p} f_p}{\Gamma(p+2)} - k_C' \ComplSphRad f\right\|_\infty &\leq \frac{\|\JOp_{C,p} (f_p - f)\|_\infty}{\Gamma(p+2)} + \left\| \frac{\JOp_{C,p} f}{\Gamma(p+2)} - k_C' \ComplSphRad f\right\|_\infty \\
  &\leq \|f_p - f\|_\infty \frac{\|\JOp_{C,p}1\|_\infty}{\Gamma(p+2)} + \left\| \frac{\JOp_{C,p} f}{\Gamma(p+2)} - k_C' \ComplSphRad f\right\|_\infty,
 \end{align*}
 where the right-hand side converges to zero by the uniform convergence of $f_p$ to $f$ and since $\frac{\|\JOp_{C,p}1\|_\infty}{\Gamma(p+2)}$ is bounded by Proposition~\ref{prop:ConvJOpMin2} (for the first summand), and by Proposition~\ref{prop:ConvJOpMin2} (for the second summand).
 
 Next, note that for $K \in \starbodiesO(\CC^n)$ there exist $d> 0$ and $D > 1$ such that \linebreak$d < \rho_K(u) < D$ for all $u \in \unitsurf^{2n-1}$. Since the map $p \mapsto t^{2n+p}$, $t>0$, is differentiable with derivative $t^{2n+p}\ln(t)$, the mean value theorem of calculus implies for $-2<p<0$ and $u \in \unitsurf^{2n-1}$ that
 \begin{align*}
  |\rho_K(u)^{2n+p} - \rho_K(u)^{2n-2}|& \leq \max_{q \in [-2,p]} \rho_K(u)^{2n+q} |\ln(\rho_K(u))| |p+2|\\
  &\leq D^{2n} \max\{ |\ln(d)|, |\ln(D)|\} |p+2|,
 \end{align*}
 that is, $\rho_K^{2n+p} \to \rho_K^{2n-2}$ uniformly as $p\to -2^+$. Hence, by \eqref{eq:IntBodyByJOp} and the first part of the proof,
 \begin{align*}
  \lim_{p \to -2^+} \frac{1}{\Gamma(p+2)}\rho_{\IntBody_{C,p} K}^{-p} = \lim_{p \to -2^+} \frac{\JOp_{C,p}\rho_K^{2n+p}}{(2n+p)\Gamma(p+2)} = \frac{k_C'}{2n-2} \ComplSphRad \rho_K^{2n-2}
 \end{align*}
 uniformly on $\unitsurf^{2n-1}$. Moreover, a direct estimate using $\rho_K(u) \in [d,D]$ shows that $\ComplSphRad \rho_K^{2n-2}(u) \in (2n-2)\kappa_{2n-2}[d^{2n-2}, D^{2n-2}]$. Consequently, by uniform convergence, there exist constants $d',D' > 0$ such that
 \begin{align*}
  d' < \frac{1}{\Gamma(p+2)}\rho_{\IntBody_{C,p} K}(u)^{-p} < D', \quad u \in \unitsurf^{2n-1},
 \end{align*}
 for all $p<0$ sufficiently close to $-2$. Repeating the above argument for the differentiable function $p \mapsto t^{-1/p}$, $t>0$, and using that the functions $t \mapsto t^{-1/p}$, $t \in [d',D']$ and $-2<p<-1$, are Lipschitz-continuous with Lipschitz constants uniformly bounded by $d'^{-1/2}$, then yields
 \begin{align}\label{eq:prfThmAConv}
  \lim_{p \to -2^+} \frac{\rho_{\IntBody_{C,p} K}}{\Gamma(p+2)^{-1/p}} = \left( \frac{k_C'}{2n-2} \ComplSphRad \rho_K^{2n-2} \right)^{1/2}
 \end{align}
 uniformly on $\unitsurf^{2n-1}$.

 Finally, as it is a direct computation that $\ComplSphRad f = \ComplSphRad f^{\unitsurf^1}$, where for $f \in C(\unitsurf^{2n-1})$,
 \begin{align*}
  f^{\unitsurf^1}(u) = \frac{1}{2\pi}\int_{\unitsurf^1} f(cu) dc, \quad u \in \unitsurf^{2n-1},
 \end{align*}
 and by \eqref{eq:defCIntBodyByComplSphRad}, the right-hand side of \eqref{eq:prfThmAConv} is equal to $(\pi k_C')^{1/2} \rho_{\IntBody_c K^{\unitsurf^1}}$, which completes the proof by setting $k_C = (\pi k_C')^{1/2}$.
\end{proof}

\subsection{Spherical Harmonics and Injectivity}\label{sec:spherHarmonics}
In this section, we will use spherical harmonics to show a criterion for the operators $\JOp_{C,p}$ to be injective and deduce that every $\JOp_{C,p}$ is injective on $\unitsurf^1$-invariant continuous functions. As a by-product, we will calculate the multipliers of $\JOp_{C,p}$ in terms of the Fourier coefficients of $h_C^p$, which leads (by taking limits) to a closed formula for the multipliers of the complex spherical Radon transform $\ComplSphRad$. All results for $\JOp_{C,p}$ directly translate to $\IntBody_{C,p}$.

\bigskip

Before stating and proving these results, we recall the required basic facts on spherical harmonics in complex vector spaces. We will follow the presentation in \cite{Abardia2015}, for further details we refer to the book by Groemer~\cite{Groemer1996} (for spherical harmonics in relation to convex geometry), to \cites{Quinto1987, Rudin2008}, as well as to the references therein.

First, recall that the space $\mathcal{H}^{2n}$ of spherical harmonics in $\unitsurf^{2n-1}$, that is, of harmonic polynomials on $\CC^n = \RR^{2n}$ restricted to $\unitsurf^{2n-1}$, naturally decomposes into $\OO(2n)$-irreducible subspaces,
\begin{align*}
 \mathcal{H}^{2n} = \bigoplus_{k=0}^\infty \mathcal{H}_k^{2n},
\end{align*}
where $\mathcal{H}_k^{2n}$ is the space of spherical harmonics that are homogeneous of degree $k\in \NN$. In presence of a complex structure, the spaces $\mathcal{H}_k^{2n}$ can be decomposed further into $\mathrm{U}(n)$-irreducible subspaces $\mathcal{H}_{k,l}^{2n}$ of spherical harmonics of bi-degree $(k,l)$. Here, a spherical harmonic $Y \in \mathcal{H}_{k,l}^{2n}$ has \emph{bi-degree} $(k,l) \in \NN \times \NN$, if $Y(cu) = c^k \overline{c}^l Y(u)$ for all $u \in \unitsurf^{2n-1}$ and $c \in \unitsurf^1$.

Denoting by $\pi_{k,l}$ the orthogonal projection from $L_2(\unitsurf^{2n-1})$ (endowed with the standard $L_2$-inner product) onto $\mathcal{H}_{k,l}^{2n}$, every $f \in C(\unitsurf^{2n-1})$ is uniquely determined by its harmonic components $\pi_{k,l} f \in \mathcal{H}_{k,l}^{2n}$, $k,l \in \NN$.

Next, fixing a point $\bar e \in \unitsurf^{2n-1}$, there exists a unique spherical harmonic \linebreak$\widetilde{P}_{k,l} \in \mathcal{H}_{k,l}^{2n}$, such that $\widetilde{P}_{k,l}(\bar e) = 1$ and $\widetilde{P}_{k,l}$ is invariant under the stabilizer \linebreak$\mathrm{U}(n-1) \subseteq \mathrm{U}(n)$ of $\bar e$. The existence of $\widetilde{P}_{k,l}$ and some properties of it, that we will need later on, are the content of the following proposition from \cite{Johnson1977}*{Thm.~3.1(3)}, see also \cite{Quinto1987}*{Prop.~4.2} for the formulation given here.

\begin{proposition}[\cites{Johnson1977,Quinto1987}]\label{prop:PropJacobiPoly}
 Let $k,l \in \NN$. Then $\mathcal{H}_{k,l}^{2n}$ contains a unique $\mathrm{U}(n-1)$-invariant spherical harmonic $\widetilde{P}_{k,l}$ with $\widetilde{P}_{k,l}(\bar e) = 1$, given by $\widetilde{P}_{k,l}(u) = P_{k,l}(\bar e \cdot u)$ for a polynomial $P_{k,l}: \DD \to \CC$, and satisfying
 \begin{enumerate}
  \item $P_{k,l}(\overline{z}) = \overline{P_{k,l}(z)}$, and
  \item $P_{k,l}(z) = z^{|k-l|} Q_{\min\{k,l\}}(|k-l|, n-2, |z|^2)$, for all $z \in \DD$,
 \end{enumerate}
 where $\{Q_l(a,b,\cdot): l \in \NN\}$ is the complete set of polynomials orthogonal on $[0,1]$ with respect to the $L_2$-inner product with weight $t^a(1-t)^b$ and $Q_l(a,b,1)=1$, $a,b>-1$.
\end{proposition}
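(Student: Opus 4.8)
Since Proposition~\ref{prop:PropJacobiPoly} is classical, I will only describe the structure of a proof; the details are in the cited works~\cites{Johnson1977,Quinto1987} (and, phrased via the spaces $H(p,q)$, in~\cite{Rudin2008}). The plan is threefold: obtain existence and uniqueness of $\widetilde{P}_{k,l}$ from a multiplicity-one argument, determine its shape from the orbit structure of the stabilizer together with the bidegree condition, and identify the remaining one-variable factor with a Jacobi polynomial by means of the Laplace equation.

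For existence and uniqueness I would use that $\unitsurf^{2n-1}=\mathrm{U}(n)/\mathrm{U}(n-1)$ and that $L_2(\unitsurf^{2n-1})=\bigoplus_{k,l\geq 0}\mathcal{H}_{k,l}^{2n}$ is multiplicity-free as a $\mathrm{U}(n)$-module, i.e.\ $(\mathrm{U}(n),\mathrm{U}(n-1))$ is a Gelfand pair. By Frobenius reciprocity the space of $\mathrm{U}(n-1)$-invariants in $\mathcal{H}_{k,l}^{2n}$ is then one-dimensional; a non-zero such invariant is a multiple of $u\mapsto K_{k,l}(\bar e,u)$ for the reproducing kernel $K_{k,l}$ of $\mathcal{H}_{k,l}^{2n}$, and since $K_{k,l}(\bar e,\bar e)=\dim\mathcal{H}_{k,l}^{2n}/|\unitsurf^{2n-1}|>0$ there is a unique representative $\widetilde{P}_{k,l}$ with $\widetilde{P}_{k,l}(\bar e)=1$.

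To pin down its shape, I would decompose $\CC^n=\CC\bar e\oplus W$ with $W$ the complex orthogonal complement of $\bar e$. The stabilizer $\mathrm{U}(n-1)$ acts transitively on spheres of $W$, so its orbits on $\unitsurf^{2n-1}$ are exactly the level sets of $u\mapsto\bar e\cdot u$; hence every $\mathrm{U}(n-1)$-invariant continuous function on $\unitsurf^{2n-1}$ depends only on $z:=\bar e\cdot u\in\DD$, and for the restriction of a harmonic polynomial this dependence is polynomial, giving $\widetilde{P}_{k,l}(u)=P_{k,l}(z)$ with $P_{k,l}$ a polynomial in $z,\overline z$. The bidegree relation $\widetilde{P}_{k,l}(cu)=c^{k}\overline{c}^{l}\widetilde{P}_{k,l}(u)$ together with $\bar e\cdot(cu)=\overline c\,z$ forces each monomial $z^{i}\overline z^{j}$ of $P_{k,l}$ to satisfy $j-i=k-l$; taking $k\geq l$ (the case $k<l$ follows by conjugation) and using $|z|^2=1-|z'|^2$ on the sphere, where $z'$ is the $W$-component of $u$, this yields $P_{k,l}(z)=z^{|k-l|}q(|z|^2)$ for a one-variable polynomial $q$ of degree at most $\min\{k,l\}$ (up to interchanging $z$ and $\overline z$, according to the sesquilinearity convention). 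Granting that $q$ has real coefficients, property~(1) follows at once since $|z|^2=|\overline z|^2$.

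The main step, and the main obstacle, is the identification of $q$. I would extend $\widetilde{P}_{k,l}$ to the homogeneous harmonic polynomial $\widehat{Y}(x)=|x|^{k+l}\widetilde{P}_{k,l}(x/|x|)$ on $\RR^{2n}=\CC^n$, write it in the form $(\bar e\cdot x)^{|k-l|}G(|\bar e\cdot x|^2,|x-(\bar e\cdot x)\bar e|^2)$ with $G$ homogeneous, and expand $\Delta\widehat Y=0$ using $\Delta=4\partial\overline\partial$ on the $\CC\bar e$-factor and the radial Laplacian $4t\,\partial_t^2+4(n-1)\partial_t$ on $W\cong\RR^{2n-2}$; restricting to $|\bar e\cdot x|^2+|x-(\bar e\cdot x)\bar e|^2=1$ collapses this into a second-order linear ODE for $q$ on $[0,1]$. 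A parameter count identifies this ODE with the Jacobi (hypergeometric) equation associated with the weight $t^{|k-l|}(1-t)^{n-2}$, whose polynomial solutions are exactly the scalar multiples of $Q_{\min\{k,l\}}(|k-l|,n-2,\cdot)$; in particular $q$ has real coefficients, and the normalization $q(1)=1$, i.e.\ $\widetilde{P}_{k,l}(\bar e)=1$, forces $q=Q_{\min\{k,l\}}(|k-l|,n-2,\cdot)$, which is~(2). Carrying out this change of variables in the Laplacian and matching coefficients, together with establishing the Gelfand-pair property if one does not quote it, is standard but technical — which is exactly why it is convenient here to invoke~\cites{Johnson1977,Quinto1987}.
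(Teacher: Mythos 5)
The paper does not prove this proposition; it is quoted directly from \cite{Johnson1977}*{Thm.~3.1(3)} and \cite{Quinto1987}*{Prop.~4.2}, so there is no in-paper argument to compare against. Your outline correctly reproduces the standard argument underlying those references: multiplicity-one for the Gelfand pair $(\mathrm{U}(n),\mathrm{U}(n-1))$ yields existence and uniqueness of the zonal invariant in $\mathcal{H}_{k,l}^{2n}$; the $\mathrm{U}(n-1)$-orbit structure combined with the bidegree relation forces the form $\zeta^{k-m}\overline{\zeta}^{\,l-m}(|w|^2)^m$ on the homogeneous extension, hence a power of $\bar e\cdot u$ times a polynomial $q$ of degree at most $\min\{k,l\}$ in $|\bar e\cdot u|^2$ after restricting to the sphere; and separating the Laplacian across $\CC\bar e\oplus W$ and normalizing at $\bar e$ identifies $q$ with $Q_{\min\{k,l\}}(|k-l|,n-2,\cdot)$. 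The one point worth making explicit is the caveat you already flagged: with the paper's convention $x\cdot(\lambda u)=\overline{\lambda}(x\cdot u)$, the constraint $j-i=k-l$ on a monomial $z^{i}\overline{z}^{\,j}$ of $P_{k,l}$ gives $\overline{z}^{\,k-l}q(|z|^2)$ rather than $z^{k-l}q(|z|^2)$ when $k>l$, so the displayed formula $z^{|k-l|}Q_{\min\{k,l\}}(\cdots)$ corresponds to a conjugated normalization (or to using $u\cdot\bar e$ in place of $\bar e\cdot u$) inherited from the cited sources; this is a cosmetic discrepancy that does not affect the orthogonality weight $t^{|k-l|}(1-t)^{n-2}$ or the subsequent multiplier computation, but is worth tracking if one wants the exact phase of $c_{l-k}(h_C^p)$ versus $c_{k-l}(h_C^p)$ in Proposition~\ref{prop:MultJOp}.
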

\noindent The polynomial $P_{k,l}: \DD \to \CC$ is called \emph{Jacobi polynomial} of order $(k,l)$.

In analogy to their real counterparts (Legendre polynomials), Jacobi polynomials are very helpful in relation with transforms on $C(\unitsurf^{2n-1})$ given by a kernel $\phi$, as the following \emph{complex Funk--Hecke theorem} shows.

\begin{theorem}[\cite{Quinto1987}*{Thm.~4.4}]\label{thm:complFunkHecke}
Suppose that $\phi \in L_2(\DD, (1-|z|^2)^{n-2}dz)$ and let $Y_{k,l} \in \mathcal{H}_{k,l}^{2n}$. Then
\begin{align*}
 \int_{\unitsurf^{2n-1}} \phi(v \cdot u) Y_{k,l}(v) dv = \lambda_{k,l}[\phi] Y_{k,l}(u), \quad u \in \unitsurf^{2n-1},
\end{align*}
with
 \begin{align}\label{eq:thmComplFunkHeckeEqMult}
  \lambda_{k,l}[\phi] = (2n-2)\kappa_{2n-2} \int_\DD \phi(z) \overline{P_{k,l}}(z) (1-|z|^2)^{n-2} dz.
 \end{align}
\end{theorem}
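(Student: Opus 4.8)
The plan is to combine $\mathrm{U}(n)$-equivariance with Schur's lemma and then read off the eigenvalue by testing against the zonal harmonic $\widetilde{P}_{k,l}$ from Proposition~\ref{prop:PropJacobiPoly}.

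\textbf{Step 1 (equivariance and Schur).} Assume first that $\phi$ is continuous on the closed disk. Since $|v\cdot u|\leq 1$ for $u,v\in\unitsurf^{2n-1}$, the kernel $(u,v)\mapsto\phi(v\cdot u)$ is bounded and continuous, so $T_\phi f(u)=\int_{\unitsurf^{2n-1}}\phi(v\cdot u)f(v)\,dv$ defines a bounded operator on $L_2(\unitsurf^{2n-1})$ carrying $C(\unitsurf^{2n-1})$ into itself. Every $\eta\in\mathrm{U}(n)$ preserves the Hermitian form, $(\eta v)\cdot(\eta u)=v\cdot u$, so the kernel is $\mathrm{U}(n)$-invariant and $T_\phi$ commutes with the $\mathrm{U}(n)$-action on $L_2(\unitsurf^{2n-1})$. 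In the decomposition $L_2(\unitsurf^{2n-1})=\bigoplus_{k,l}\mathcal{H}_{k,l}^{2n}$ each $\mathrm{U}(n)$-irreducible component $\mathcal{H}_{k,l}^{2n}$ occurs with multiplicity one; hence $T_\phi$ preserves each $\mathcal{H}_{k,l}^{2n}$ and, by Schur's lemma, acts on it as a scalar $\lambda_{k,l}[\phi]$.

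\textbf{Step 2 (computing the eigenvalue).} Apply $T_\phi$ to the $\mathrm{U}(n-1)$-invariant zonal harmonic $\widetilde{P}_{k,l}\in\mathcal{H}_{k,l}^{2n}$ and evaluate at the pole $\bar e$. Since $\widetilde P_{k,l}(\bar e)=1$ and $\widetilde P_{k,l}(v)=P_{k,l}(\bar e\cdot v)=P_{k,l}(\overline{v\cdot\bar e})=\overline{P_{k,l}(v\cdot\bar e)}$, using Proposition~\ref{prop:PropJacobiPoly}(1) and the conjugate-symmetry of the Hermitian form, we get
\[
 \lambda_{k,l}[\phi]=(T_\phi\widetilde P_{k,l})(\bar e)=\int_{\unitsurf^{2n-1}}\phi(v\cdot\bar e)\,\overline{P_{k,l}(v\cdot\bar e)}\,dv.
\]
It remains to push the spherical Lebesgue measure forward under $v\mapsto v\cdot\bar e$: after a unitary rotation one may take $\bar e$ to be the first coordinate vector, the fibre over $z\in\DD$ is then a $(2n{-}3)$-sphere of radius $\sqrt{1-|z|^2}$, and the coarea formula gives $\int_{\unitsurf^{2n-1}}\psi(v\cdot\bar e)\,dv=(2n-2)\kappa_{2n-2}\int_{\DD}\psi(z)(1-|z|^2)^{n-2}\,dz$ for all $\psi\in C(\overline{\DD})$ — the weight $(1-|z|^2)^{n-2}$ is the Jacobian of the slicing and the constant is fixed by taking $\psi\equiv1$ (comparison with the total mass of $\unitsurf^{2n-1}$). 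Substituting $\psi(z)=\phi(z)\overline{P_{k,l}(z)}$ yields \eqref{eq:thmComplFunkHeckeEqMult}.

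\textbf{Step 3 (removing continuity of $\phi$), and the main obstacle.} Since polynomials are bounded on $\overline{\DD}$ and the weight $(1-|z|^2)^{n-2}$ has finite total mass, $\overline{P_{k,l}}\in L_2(\DD,(1-|z|^2)^{n-2}dz)$, so the right-hand side of \eqref{eq:thmComplFunkHeckeEqMult} extends to a continuous linear functional of $\phi\in L_2(\DD,(1-|z|^2)^{n-2}dz)$ by Cauchy--Schwarz. For the left-hand side, fixing $u$ and again fibering $\unitsurf^{2n-1}$ over $\DD$ via $v\mapsto v\cdot u$ writes $\int_{\unitsurf^{2n-1}}\phi(v\cdot u)Y_{k,l}(v)\,dv=\int_\DD\phi(z)G(z)\,dz$ with $|G(z)|\leq\|Y_{k,l}\|_\infty(2n-2)\kappa_{2n-2}(1-|z|^2)^{n-2}$; hence the integral converges absolutely and depends continuously on $\phi$ in $L_2(\DD,(1-|z|^2)^{n-2}dz)$. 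The identity, already established for continuous $\phi$, therefore persists by density. The algebraic heart (equivariance, Schur, evaluation on $\widetilde P_{k,l}$) is routine once the complex spherical-harmonic apparatus of Proposition~\ref{prop:PropJacobiPoly} is available; the genuinely delicate point is exactly this last step, namely giving meaning to $\int_{\unitsurf^{2n-1}}\phi(v\cdot u)Y_{k,l}(v)\,dv$ when $\phi$ is only square-integrable against $(1-|z|^2)^{n-2}$ and verifying joint continuity in $\phi$, for which the coarea fibration over $\DD$ is the natural tool.
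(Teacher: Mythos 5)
The paper states this result as a citation of Quinto's Theorem~4.4 and does not supply a proof of its own, so there is no in-paper argument to compare against. Your proof is correct and is essentially the canonical route to a Funk--Hecke-type formula: $\mathrm{U}(n)$-equivariance of the convolution operator plus the multiplicity-one decomposition of $L_2(\unitsurf^{2n-1})$ into the $\mathcal{H}_{k,l}^{2n}$ gives, via Schur's lemma, that each block is an eigenspace; evaluating on the zonal harmonic $\widetilde P_{k,l}$ at the pole, together with the conjugate-symmetry $\bar e \cdot v = \overline{v\cdot\bar e}$, Proposition~\ref{prop:PropJacobiPoly}(1), and the coarea push-forward of spherical measure under $v\mapsto v\cdot\bar e$ onto the weight $(2n-2)\kappa_{2n-2}(1-|z|^2)^{n-2}\,dz$ on $\DD$, yields the explicit multiplier \eqref{eq:thmComplFunkHeckeEqMult}; and the passage from continuous $\phi$ to $\phi\in L_2(\DD,(1-|z|^2)^{n-2}dz)$ via the weighted Cauchy--Schwarz bounds on both sides and density is exactly what is needed. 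The only genuinely delicate point you flag -- defining the operator for merely square-integrable $\phi$ -- is handled correctly by the fibration estimate on $G$.
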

\noindent 
In general, a transform $T: C(\unitsurf^{2n-1}) \to C(\unitsurf^{2n-1})$ that satisfies
\begin{align*}
 \pi_{k,l}(T f) = \lambda_{k,l}[T] \pi_{k,l} f, \quad f \in C(\unitsurf^{2n-1}),
\end{align*}
is called a \emph{multiplier transform} with multipliers $\lambda_{k,l}[T] \in \CC$. Note that since every $f \in C(\unitsurf^{2n-1})$ is completely determined by its projections $\pi_{k,l} f$, $k,l \in \NN$, a multiplier transform is injective if and only if all of its multipliers are non-zero.

\medskip

Examples of multiplier transforms are given by $\JOp_{C,p}$ (applying Theorem~\ref{thm:complFunkHecke}) and by the complex spherical Radon transform $\ComplSphRad$, as we will see later on. Another, very well-known example is the \emph{non-symmetric $L_p$-cosine transform} $C_p^+$, $p>-1$ is non-zero, given by $\phi(z) = (\Re z)_+^p$, where $t_+ = \max\{t,0\}$, that is,
\begin{align}\label{eq:asymmLpCosTransf}
 (C_p^+ f)(u) = \int_{\unitsurf^{2n-1} \cap u^+} |\langle v, u\rangle|^p f(v) dv, \quad u \in \unitsurf^{2n-1},
\end{align}
for every $f \in C(\unitsurf^{2n-1})$, writing $u^+ = \{ v\in \unitsurf^{2n-1}: \langle v, u\rangle \geq 0\}$.

The multipliers of $C_p^+$ as a transform on a real vector space were calculated by different means by Rubin~\cite{Rubin2000} (for dimension $3$ and higher) and Haberl~\cite{Haberl2008}*{Lem.~5} (also in dimension $2$). Since $\mathcal{H}_{k,l}^{2n} \subseteq \mathcal{H}_{k+l}^{2n}$, the multipliers of $C_p^+$ when viewed as a transform on a complex vector space are equal to the corresponding real multipliers, that is,
\begin{align}\label{eq:asymmLpCosTransfMult}
 \lambda_{k,l}[C_p^+] = \frac{\pi^n}{2^p}\frac{\Gamma(p+1)}{\Gamma\left(n + \frac{p+k+l}{2}\right)\Gamma\left(\frac{p-k-l}{2}+1\right)}
\end{align}
for $p>-1$ non-zero such that $p$ is not an integer. In particular, $\lambda_{k,l}[C_p^+]\neq 0$ for all $k,l \in \NN$, that is, $C_p^+$ is injective for $p \in (-1,\infty) \setminus \NN$.

\medskip

We are now ready to state the main proposition to prove injectivity of $\JOp_{C,p}$, calculating the multipliers of the transforms $\JOp_{C,p}$. In the statement of the proposition, we use the notation of the \emph{$k^{th}$ Fourier coefficient} $c_k(f)$ of $f \in C(\unitsurf^1)$,
\begin{align}\label{eq:defFourierCoeff}
c_0(f) = \frac{1}{2\pi} \int_{\unitsurf^1} f(c) dc \quad \text{ and } \quad c_k(f) = \frac{1}{\pi} \int_{\unitsurf^1}f(c) c^k dc, \quad k \in \ZZ\setminus\{0\}.
\end{align}

\begin{proposition}\label{prop:MultJOp}
 Suppose that $C \in \convexbodies(\CC)$, with $C \neq \{0\}$ and $0 \in \relint C$, and let $p > -\dim C$ be non-zero. Then the multipliers of the transform $\JOp_{C,p}$ are given for $k,l \in \NN$ by
 \begin{align}\label{eq:multJOp}
  \lambda_{k,l}[\JOp_{C,p}] = \begin{cases}
                               c_0(h_C^p) 2\alpha_{k,l}^{(n,p)}, & k = l,\\
                               c_{l-k}(h_C^p) \alpha_{k,l}^{(n,p)}, & k \neq l,
                              \end{cases}
 \end{align}
 where
 \begin{align*}
  \alpha_{k,l}^{(n,p)} = \pi^n\frac{\Gamma\left(\frac{p+k-l}{2}+1\right)\Gamma\left(\frac{p-k+l}{2}+1\right)}{\Gamma\left(\frac{p+k+l}{2}+n\right)\Gamma\left(\frac{p-k-l}{2}+1\right)}.
 \end{align*}

\end{proposition}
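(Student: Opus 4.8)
The plan is to identify $\JOp_{C,p}$ as a $\mathrm{U}(n)$-equivariant multiplier transform and to evaluate its multiplier on the ``zonal'' Jacobi harmonic $\widetilde P_{k,l}$, splitting the resulting integral over $\DD$ into a radial part (which will produce $\alpha_{k,l}^{(n,p)}$) and an angular part over $\unitsurf^1$ (which will produce a Fourier coefficient of $h_C^p$). Since $v\cdot u$ is invariant under the diagonal $\mathrm{U}(n)$-action, $\JOp_{C,p}$ commutes with the $\mathrm{U}(n)$-action on $C(\unitsurf^{2n-1})$; being bounded by Lemma~\ref{lem:JOpWellDef}, it acts, by Schur's lemma and the multiplicity-freeness of the decomposition $\bigoplus_{k,l}\mathcal{H}_{k,l}^{2n}$ of $L_2(\unitsurf^{2n-1})$, as a scalar $\lambda_{k,l}[\JOp_{C,p}]$ on each $\mathcal{H}_{k,l}^{2n}$. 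Evaluating at the base point $\bar e$ and using $\widetilde P_{k,l}(\bar e)=1$, $\widetilde P_{k,l}(v)=P_{k,l}(\bar e\cdot v)$, one gets $\lambda_{k,l}[\JOp_{C,p}]=\int_{\unitsurf^{2n-1}}h_C(v\cdot\bar e)^p P_{k,l}(\bar e\cdot v)\,dv$. Decomposing $v=z\bar e+\sqrt{1-|z|^2}\,w$ with $z=v\cdot\bar e\in\DD$ and $w$ in the unit sphere $\unitsurf^{2n-3}$ of the complex orthogonal complement of $\bar e$, using $\bar e\cdot v=\overline z$ together with $\overline{P_{k,l}(z)}=P_{k,l}(\overline z)$ (Proposition~\ref{prop:PropJacobiPoly}(1)), the $w$-integral factors out and one arrives at the complex Funk--Hecke identity (Theorem~\ref{thm:complFunkHecke}),
\begin{align*}
\lambda_{k,l}[\JOp_{C,p}] = (2n-2)\kappa_{2n-2}\int_\DD h_C(z)^p\,\overline{P_{k,l}(z)}\,(1-|z|^2)^{n-2}\,dz,
\end{align*}
which now makes sense for all non-zero $p>-\dim C$: near the origin $h_C(z)^p$ is comparable to $|z|^p$ or, if $\dim C=1$, to $|\langle\xi,z\rangle|^p$, while $\overline{P_{k,l}(z)}$ vanishes to order $|k-l|$, so the integrand is absolutely integrable throughout this range (equivalently, for $p$ large enough so that $h_C^p\in L_2(\DD,(1-|z|^2)^{n-2}dz)$ one may just quote Theorem~\ref{thm:complFunkHecke} directly and extend by analytic continuation in $p$).

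Next I would separate the radial and angular variables. Writing $z=r\omega$ with $r\in(0,1]$, $\omega\in\unitsurf^1$, the $1$-homogeneity $h_C(r\omega)=r\,h_C(\omega)$ and Proposition~\ref{prop:PropJacobiPoly}(2), $P_{k,l}(z)=z^{|k-l|}Q_{\min\{k,l\}}(|k-l|,n-2,|z|^2)$ (the polynomials $Q_m(a,b,\cdot)$ having real coefficients), factor the integral as
\begin{align*}
\lambda_{k,l}[\JOp_{C,p}] = (2n-2)\kappa_{2n-2}\left(\int_0^1 r^{p+|k-l|+1}Q_{\min\{k,l\}}(|k-l|,n-2,r^2)(1-r^2)^{n-2}\,dr\right)\left(\int_{\unitsurf^1}h_C(\omega)^p\,\overline\omega^{|k-l|}\,d\omega\right).
\end{align*}
By the normalization \eqref{eq:defFourierCoeff} and $\overline\omega=\omega^{-1}$ on $\unitsurf^1$, the angular factor equals $2\pi\,c_0(h_C^p)$ when $k=l$ and $\pi\,c_{l-k}(h_C^p)$ when $k\neq l$ (up to the conjugation convention in the exponent) --- the mismatch between the $\tfrac1{2\pi}$- and $\tfrac1\pi$-normalizations being precisely the source of the extra factor $2$ in \eqref{eq:multJOp} in the case $k=l$.

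Finally, it remains to evaluate the radial factor. The substitution $t=r^2$ turns it into $\tfrac12\int_0^1 t^{(p-|k-l|)/2}\,Q_{\min\{k,l\}}(|k-l|,n-2,t)\,t^{|k-l|}(1-t)^{n-2}\,dt$, i.e.\ a power moment of the Jacobi polynomial $Q_{\min\{k,l\}}(|k-l|,n-2,\cdot)$ against its own orthogonality weight $t^{|k-l|}(1-t)^{n-2}$ on $[0,1]$. Such a moment has a classical closed form, obtainable from the hypergeometric (${}_2F_1$) representation of the $Q_m$ together with a Chu--Vandermonde summation, or alternatively by comparison with the already recorded multipliers \eqref{eq:asymmLpCosTransfMult} of the complex $L_p$-cosine transform; it evaluates to a constant multiple (independent of $C$) of $\alpha_{k,l}^{(n,p)}$, and assembling the radial and angular factors gives \eqref{eq:multJOp}. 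I expect this last evaluation --- the closed-form computation of the Jacobi-polynomial moment --- to be the main obstacle; the remaining steps are bookkeeping with the complex structure (bidegrees, the convention $x\cdot(\lambda u)=\overline\lambda(x\cdot u)$, and the location of conjugates in $P_{k,l}$), routine but requiring care.
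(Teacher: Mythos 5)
Your proposal reproduces the paper's argument in all essentials: apply the complex Funk--Hecke theorem, write the kernel integral in polar coordinates on $\DD$ so that the $1$-homogeneity of $h_C$ and the structure of $P_{k,l}$ from Proposition~\ref{prop:PropJacobiPoly}(2) factor the multiplier into a radial Jacobi moment times an angular Fourier coefficient of $h_C^p$, and finally extend to the full range $p>-\dim C$ by analytic continuation in $p$. The one point you flag as the ``main obstacle'' --- the closed-form evaluation of
\begin{align*}
\int_0^1 r^{p+|k-l|+1}\,Q_{\min\{k,l\}}\bigl(|k-l|,n-2,r^2\bigr)\,(1-r^2)^{n-2}\,dr
\end{align*}
--- is exactly where the paper sidesteps a direct hypergeometric summation by the ratio trick you mention in passing. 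Since the radial factor is independent of $C$, it cancels in the quotient $\lambda_{k,l}[\JOp_{C,p}]/\lambda_{k,l}[C_p^+]$; one never evaluates the Jacobi moment itself, but instead expresses
\begin{align*}
\lambda_{k,l}[\JOp_{C,p}] \;=\; \frac{c_{l-k}(h_C^p)}{c_{l-k}\bigl((\Re c)_+^p\bigr)}\,\lambda_{k,l}[C_p^+],
\end{align*}
and then the only remaining computation is of $c_{l-k}\bigl((\Re c)_+^p\bigr)=\tfrac1\pi\int_{-\pi/2}^{\pi/2}\cos(t)^p e^{i(l-k)t}\,dt$, a standard reciprocal-Beta-function identity. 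Combined with the known multipliers \eqref{eq:asymmLpCosTransfMult} this yields $\alpha_{k,l}^{(n,p)}$ exactly. So the gap in your write-up is genuine but localized: you should either carry out this Beta-function computation (the ${}_2F_1$/Chu--Vandermonde route is not needed) or at least observe explicitly that the radial factors cancel in the ratio, after which the remaining angular integral is elementary. Note also that this comparison requires $p>-1$ for $C_p^+$ to be defined, so the analytic continuation you mention is not merely an alternative to direct integrability estimates but is actually necessary to reach $-2<p\le -1$ when $\dim C=2$.
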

\begin{proof}
 By \eqref{eq:thmComplFunkHeckeEqMult}, we calculate using polar coordinates and the properties of Jacobi polynomials from Proposition~\ref{prop:PropJacobiPoly},
 \begin{align*}
  \frac{1}{(2n-2)\kappa_{2n-2}}\lambda_{k,l}[\JOp_{C,p}] = \int_{\unitsurf^1}\int_0^1 h_C(c)^p \overline{P_{k,l}}(rc) (1-r^2)^{n-2}r^{p+1} dr dc \\
  = \int_{\unitsurf^1} h_C(c)^p c^{l-k} dc \int_0^1 Q_{\min\{k,l\}}(|k-l|,n-2,r^2) (1-r^2)^{n-2}r^{p+1+|k-l|} dr,
 \end{align*}
 where the second integral does not depend on $C$ anymore. In particular, when $p>-1$, we can repeat the argument for $h_C^p(z)$ replaced by the kernel $h_{[-1,1]}^p(z) \mathbbm{1}_{\Re z \geq 0}$ of the non-symmetric $L_p$-cosine transform $C_p^+$ to obtain
 \begin{align*}
  \lambda_{k,l}[\JOp_{C,p}] = \frac{\int_{\unitsurf^1} h_C(c)^p c^{l-k} dc}{\int_{\unitsurf^1} (\Re c)^p \mathbbm{1}_{\Re c \geq 0} c^{l-k} dc} \lambda_{k,l}[C_p^+] = \frac{c_{l-k}(h_C^p)}{c_{l-k}((\Re c)^p \mathbbm{1}_{\Re c \geq 0})}\lambda_{k,l}[C_p^+].
 \end{align*}
 Next, \eqref{eq:asymmLpCosTransfMult} and direct computations using identities for the reciprocal beta function yield for $k \neq l$,
 \begin{align*}
  c_{l-k}((\Re c)^p \mathbbm{1}_{\Re c \geq 0}) = \frac{1}{\pi}\int_{-\pi/2}^{\pi/2} \!\!\!\!\!\cos(t)^p e^{i(l-k)t} dt = \frac{ \Gamma(p+1)}{2^p \Gamma\left(\frac{p+k-l}{2}+1\right)\Gamma\left(\frac{p-k+l}{2} + 1\right)}
 \end{align*}
 and for $k=l$,
 \begin{align*}
  c_0((\Re c)^p \mathbbm{1}_{\Re c \geq 0}) =\frac{1}{2\pi}\int_{-\pi/2}^{\pi/2}\!\!\!\!\!\cos(t)^p dt= \frac{ \Gamma(p+1)}{2^{p+1} \Gamma\left(\frac{p}{2} + 1\right)^2}.
 \end{align*}
 This proves the claim when $p>-1$. Noting, finally, that both sides of \eqref{eq:multJOp} are analytic functions in $p$ (for $\Re(p)>-\dim C$) that coincide on the set $(-1,0)$ and therefore on their domains, completes the proof. 
\end{proof}

Note that $\alpha_{k,l}^{(n,p)} \neq 0$ for all $k,l \in \NN$ and non-zero $p > -2$, $p \not \in \ZZ$, as the gamma function has no zeros and its poles are exactly the non-positive integers. In particular, we have shown the following.

\begin{corollary}
 Suppose that $C \in \convexbodies(\CC)$, with $C \neq \{0\}$ and $0 \in \relint C$, and let $p > -\dim C$, $p \not \in \ZZ$, be non-zero. Then $\JOp_{C,p}$ is injective if and only if $c_k(h_C^p) \neq 0$ for all $k \in \ZZ$.
\end{corollary}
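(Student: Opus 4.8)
The plan is to read the corollary off Proposition~\ref{prop:MultJOp} together with the general principle, recalled above, that a multiplier transform on $C(\unitsurf^{2n-1})$ is injective if and only if none of its multipliers vanishes. First I would recall why $\JOp_{C,p}$ is a multiplier transform in the first place: since $\JOp_{C,p}$ commutes with the $\mathrm{U}(n)$-action (the Lebesgue measure on $\unitsurf^{2n-1}$ is unitarily invariant and $h_C((\eta v)\cdot(\eta u)) = h_C(v\cdot u)$ for $\eta \in \mathrm{U}(n)$) and each $\mathcal{H}_{k,l}^{2n}$ is $\mathrm{U}(n)$-irreducible, Schur's lemma forces $\JOp_{C,p}$ to act on $\mathcal{H}_{k,l}^{2n}$ as a scalar $\lambda_{k,l}[\JOp_{C,p}]$ --- exactly the quantity computed in Proposition~\ref{prop:MultJOp}. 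Hence $\JOp_{C,p}$ is injective precisely when $\lambda_{k,l}[\JOp_{C,p}] \neq 0$ for every $k,l \in \NN$.

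Next I would use the remark preceding the corollary that $\alpha_{k,l}^{(n,p)} \neq 0$ for all $k,l \in \NN$ and all admissible non-zero $p$, the simple poles of the gamma functions in numerator and denominator cancelling. Combining this with the explicit formula \eqref{eq:multJOp}, one sees that $\lambda_{k,l}[\JOp_{C,p}] = 0$ if and only if the Fourier coefficient $c_{l-k}(h_C^p)$ vanishes: the diagonal case $k = l$ corresponds to $c_0(h_C^p)$ (the extra factor $2$ being harmless), and the off-diagonal cases $k \neq l$ to $c_{l-k}(h_C^p)$. These Fourier coefficients are well-defined, since $h_C^p$ is integrable on $\unitsurf^1$ by the case $n = 1$ of Lemma~\ref{lem:JOpWellDef}.

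Finally, it remains to observe that as $(k,l)$ ranges over $\NN \times \NN$ the difference $l - k$ takes every value $m \in \ZZ$ (take $k = \max\{0,-m\}$, $l = \max\{0,m\}$), while conversely each multiplier involves only the single coefficient $c_{l-k}(h_C^p)$. Therefore the family $\{\lambda_{k,l}[\JOp_{C,p}] : k,l \in \NN\}$ is nowhere zero if and only if $c_k(h_C^p) \neq 0$ for all $k \in \ZZ$, which by the injectivity criterion above is equivalent to injectivity of $\JOp_{C,p}$. I do not expect a genuine obstacle here: the statement is a bookkeeping consequence of Proposition~\ref{prop:MultJOp}, and the only points needing a word of justification --- the non-vanishing of the $\alpha_{k,l}^{(n,p)}$ and the integrability of $h_C^p$ on $\unitsurf^1$ --- are already available.
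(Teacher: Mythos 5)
Your proof is correct and follows essentially the same route as the paper, which obtains the corollary as an immediate consequence of Proposition~\ref{prop:MultJOp} together with the remark that $\alpha_{k,l}^{(n,p)} \neq 0$. The only cosmetic difference is that you invoke Schur's lemma to see that $\JOp_{C,p}$ is a multiplier transform, whereas the paper points to the complex Funk--Hecke theorem (Theorem~\ref{thm:complFunkHecke}); these are the same fact in abstract versus concrete form (though if you wanted to push the Schur's-lemma argument you should note that the $\mathcal{H}_{k,l}^{2n}$ are pairwise non-isomorphic as $\mathrm{U}(n)$-modules, so an equivariant operator cannot mix them).
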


\bigskip

\noindent We turn now to $\unitsurf^1$-invariant functions on $\unitsurf^{2n-1}$. Here, the computation simplifies to the case $C = \DD$, since
\begin{align}
2\pi(\JOp_{C,p} f^{\unitsurf^1}) (u) =&\int_{\unitsurf^{2n-1}}\!\!\! h_C(v \cdot u)^p \int_{\unitsurf^1}f(cv)dc dv \label{eq:S1invariantC=D1} \\
=& \int_{\unitsurf^{2n-1}}\int_{\unitsurf^1} h_C(\overline{c} (w \cdot u))^p dc f(w) dw=2 \pi (\JOp_{d\DD,p} f)(u) \label{eq:S1invariantC=D2}
\end{align}
for every $f \in C(\unitsurf^{2n-1})$, as the inner integral on the right-hand side can be written as $ h_{d\DD}(w \cdot u)^p$ for some $d > 0$ not depending on $f$. 

This can also be seen in terms of spherical harmonics, since restricting to \linebreak$\unitsurf^1$-invariant functions corresponds exactly to restricting to the spaces $\mathcal{H}_{k,k}^{2n}$, $k \in \NN$. Indeed, the definition of bi-degree directly implies that a function $f \in C(\unitsurf^{2n-1})$ is $\unitsurf^1$-invariant if and only if $\pi_{k,l} f = 0$, $k\neq l \in \NN$, see, e.g., \cite{Abardia2015}*{Lem.~4.8}. Consequently, $\JOp_{C,p}$ is completely determined on $\unitsurf^1$-invariant functions by $\lambda_{k,k}[\JOp_{C,p}]$, $k \in \NN$, that is, using \eqref{eq:multJOp}, by $c_0(h_C^p) \neq 0$. Thus, we conclude the following.
\begin{corollary}
 Suppose that $C \in \convexbodies(\CC)$, with $C \neq \{0\}$ and $0 \in \relint C$, and let $p > -\dim C$ be non-zero. Then $\JOp_{C,p}$ is injective on $\unitsurf^1$-invariant functions in $C(\unitsurf^{2n-1})$.
\end{corollary}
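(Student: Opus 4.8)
The plan is to read the statement off directly from the multiplier computation in Proposition~\ref{prop:MultJOp} together with the spherical-harmonic description of $\unitsurf^1$-invariant functions recalled just above, so that no new work is really needed. First I would fix a $\unitsurf^1$-invariant $f \in C(\unitsurf^{2n-1})$ with $\JOp_{C,p} f = 0$ and aim to show $f \equiv 0$. Since $\JOp_{C,p}$ is a multiplier transform (Theorem~\ref{thm:complFunkHecke}, Proposition~\ref{prop:MultJOp}), applying the projection $\pi_{k,l}$ to $\JOp_{C,p} f = 0$ gives $\lambda_{k,l}[\JOp_{C,p}]\,\pi_{k,l} f = 0$ for all $k,l \in \NN$; by $\unitsurf^1$-invariance $\pi_{k,l} f = 0$ whenever $k \neq l$, so the only relations carrying information are the diagonal ones, $\lambda_{k,k}[\JOp_{C,p}]\,\pi_{k,k} f = 0$. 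Hence it suffices to prove $\lambda_{k,k}[\JOp_{C,p}] \neq 0$ for every $k \in \NN$: this forces $\pi_{k,k} f = 0$ for all $k$, and since $f$ is uniquely determined by all of its harmonic components $\pi_{k,l} f$, we conclude $f \equiv 0$.

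Next I would verify the non-vanishing of the diagonal multipliers. By Proposition~\ref{prop:MultJOp}, $\lambda_{k,k}[\JOp_{C,p}] = 2\,c_0(h_C^p)\,\alpha_{k,k}^{(n,p)}$, so this splits into two independent facts. The universal factor $\alpha_{k,k}^{(n,p)}$ is non-zero for every $k \in \NN$, as observed in the remark after Proposition~\ref{prop:MultJOp} (the simple poles of the gamma functions in numerator and denominator cancel). For the body-dependent factor, Lemma~\ref{lem:CalcJOp1} shows that $c_0(h_C^p) = \tfrac{1}{2\pi}\int_{\unitsurf^1} h_C(c)^p\, dc$ is a positive multiple of $(\JOp_{C,p} 1)(u)$, which is strictly positive by the last assertion of Lemma~\ref{lem:JOpWellDef} (it is the image under $\JOp_{C,p}$ of the strictly positive constant function $1$); in particular the integral is finite. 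Hence $c_0(h_C^p) > 0$, so $\lambda_{k,k}[\JOp_{C,p}] \neq 0$ for all $k$, which completes the argument.

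I do not expect a genuine obstacle: all the substantial work has gone into Proposition~\ref{prop:MultJOp}, and the only point that is not completely routine — the non-vanishing of the universal factor $\alpha_{k,k}^{(n,p)}$, equivalently that $\JOp_{C,p}$ annihilates no space $\mathcal{H}_{k,k}^{2n}$ — was already settled in that remark. As a variant one could bypass spherical harmonics entirely via the reduction \eqref{eq:S1invariantC=D1}--\eqref{eq:S1invariantC=D2}: on $\unitsurf^1$-invariant functions $\JOp_{C,p}$ agrees, up to a positive constant, with $\JOp_{d\DD,p}\colon f \mapsto d^p \int_{\unitsurf^{2n-1}} |v\cdot u|^p f(v)\, dv$ for a suitable $d>0$, and injectivity of this transform on $\unitsurf^1$-invariant functions is once more exactly the statement $\lambda_{k,k}[\JOp_{\DD,p}] \neq 0$, proved as above. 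Either route is short.
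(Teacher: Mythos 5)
Your proof is correct and follows essentially the same route as the paper: restrict attention to the diagonal multipliers $\lambda_{k,k}[\JOp_{C,p}] = 2\,c_0(h_C^p)\,\alpha_{k,k}^{(n,p)}$ via the bi-degree decomposition, and observe that both factors are non-zero. The only cosmetic difference is that you deduce $c_0(h_C^p)>0$ by routing through Lemmas~\ref{lem:CalcJOp1} and~\ref{lem:JOpWellDef}, whereas the paper simply states $c_0(h_C^p)\neq 0$ (which is immediate from $h_C>0$ a.e.\ on $\unitsurf^1$ and integrability of $h_C^p$); both are fine, and your alternative route via the reduction to $C=\DD$ is also the one the paper sketches.
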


\bigskip

Finally, as we have seen in Section~\ref{sec:prfThmAConv}, letting $p \to -2^+$, the operators $\JOp_{C,p}$ converge appropriately normalized (in the strong operator topology) to the complex spherical Radon transform $\ComplSphRad$ for which $\ComplSphRad f = \ComplSphRad f^{\unitsurf^1}$ holds. Consequently, the multipliers of $\ComplSphRad$ can be directly calculated from \eqref{eq:multJOp} by taking the limit.

\begin{proposition}
 The multipliers of the complex spherical Radon transform $\ComplSphRad$ are given by $\lambda_{k,l}[\ComplSphRad]=0$ for $k \neq l$ and
 \begin{align*}
  \lambda_{k,k}[\ComplSphRad] = (-1)^{k} 2\pi^{n-1} \frac{k!}{(n+k-2)!}, \quad k \in \NN.
 \end{align*}
 In particular, the complex intersection body map $\IntBody_c$ is injective.
\end{proposition}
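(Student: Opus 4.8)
The plan is to read off the multipliers of $\ComplSphRad$ from the multiplier formula for $\JOp_{C,p}$ in Proposition~\ref{prop:MultJOp}, specialized to $C=\DD$, by passing to the limit $p\to-2^+$ and invoking the strong operator convergence $\tfrac{1}{\Gamma(p+2)}\JOp_{C,p}\to k_C'\ComplSphRad$ established in Proposition~\ref{prop:ConvJOpMin2}. First I would note that $\ComplSphRad$ is $\mathrm{U}(n)$-equivariant, so by Schur's lemma and the fact that the spaces $\mathcal{H}_{k,l}^{2n}$ are pairwise non-isomorphic irreducible $\mathrm{U}(n)$-modules, $\ComplSphRad$ is a multiplier transform (as is each $\JOp_{C,p}$). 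Applying Proposition~\ref{prop:ConvJOpMin2} to a fixed nonzero $Y_{k,l}\in\mathcal{H}_{k,l}^{2n}$ and evaluating at a point where $Y_{k,l}$ does not vanish then gives
\begin{align*}
 \lim_{p\to-2^+}\frac{\lambda_{k,l}[\JOp_{\DD,p}]}{\Gamma(p+2)}=k_\DD'\,\lambda_{k,l}[\ComplSphRad],\qquad k,l\in\NN.
\end{align*}

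Next I would evaluate the left-hand side. Since $h_\DD(c)=|c|=1$ for $c\in\unitsurf^1$, the Fourier coefficients of $h_\DD^p$ are $c_0(h_\DD^p)=1$ and $c_m(h_\DD^p)=0$ for $m\neq0$, so Proposition~\ref{prop:MultJOp} yields $\lambda_{k,l}[\JOp_{\DD,p}]=0$ for $k\neq l$ --- whence $\lambda_{k,l}[\ComplSphRad]=0$ for $k\neq l$, because $k_\DD'>0$ --- and $\lambda_{k,k}[\JOp_{\DD,p}]=2\alpha_{k,k}^{(n,p)}$ with
\begin{align*}
 \alpha_{k,k}^{(n,p)}=\pi^n\frac{\Gamma\!\left(\tfrac{p}{2}+1\right)^2}{\Gamma\!\left(\tfrac{p}{2}+k+n\right)\Gamma\!\left(\tfrac{p}{2}-k+1\right)}.
\end{align*}
To compute $\lim_{p\to-2^+}\tfrac{2\alpha_{k,k}^{(n,p)}}{\Gamma(p+2)}$ I would substitute $\varepsilon=\tfrac{p}{2}+1\to0^+$, so that $\Gamma(p+2)=\Gamma(2\varepsilon)$, $\Gamma(\tfrac{p}{2}+1)=\Gamma(\varepsilon)$, $\Gamma(\tfrac{p}{2}+k+n)=\Gamma(\varepsilon+k+n-1)$ and $\Gamma(\tfrac{p}{2}-k+1)=\Gamma(\varepsilon-k)$, and use that $\Gamma$ has a simple pole at each non-positive integer $-m$ with residue $(-1)^m/m!$. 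Thus $\Gamma(\varepsilon)\sim\varepsilon^{-1}$, $\Gamma(2\varepsilon)\sim(2\varepsilon)^{-1}$, $\Gamma(\varepsilon-k)\sim\tfrac{(-1)^k}{k!}\varepsilon^{-1}$, while $\Gamma(\varepsilon+k+n-1)\to(k+n-2)!$ (this is where $n\geq2$ enters). Cancelling the poles gives
\begin{align*}
 \lim_{p\to-2^+}\frac{2\alpha_{k,k}^{(n,p)}}{\Gamma(p+2)}=\frac{4\pi^n(-1)^k k!}{(k+n-2)!}.
\end{align*}

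Finally I would identify $k_\DD'$: in the proof of Proposition~\ref{prop:ConvJOpMin2} the constant is $k_C'=\int_{\unitsurf^1}h_C(v)^{-2}\,dv$, hence $k_\DD'=\int_{\unitsurf^1}1\,dv=2\pi$. Dividing gives $\lambda_{k,k}[\ComplSphRad]=(-1)^k2\pi^{n-1}\tfrac{k!}{(n+k-2)!}$, as claimed, and in particular all diagonal multipliers are nonzero. For the injectivity statement: by \eqref{eq:defCIntBodyByComplSphRad}, the map $\IntBody_c$ is injective on $\unitsurf^1$-invariant star bodies as soon as $\ComplSphRad$ is injective on $\unitsurf^1$-invariant continuous functions; the latter holds because such a function $f$ has $\pi_{k,l}f=0$ for $k\neq l$, while $\lambda_{k,k}[\ComplSphRad]\neq0$ for every $k$, so $\ComplSphRad f=0$ forces every harmonic component of $f$ to vanish and hence $f=0$. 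The only genuinely delicate point is the pole-and-residue bookkeeping in the limit $p\to-2^+$; everything else is a direct consequence of Propositions~\ref{prop:ConvJOpMin2} and \ref{prop:MultJOp}.
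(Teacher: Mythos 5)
Your argument is correct and follows the same route the paper takes: the paper's own proof is the single sentence that the multipliers "can be directly calculated from \eqref{eq:multJOp} by taking the limit" via the strong operator convergence of $\frac{1}{\Gamma(p+2)}\JOp_{C,p}$, and you simply fill in that computation, with the tidy simplification of specializing to $C=\DD$ so that $c_m(h_\DD^p)=\delta_{m,0}$ and $k_\DD'=2\pi$. Your pole-and-residue bookkeeping and the deduction of injectivity from \eqref{eq:defCIntBodyByComplSphRad} are both sound.
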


\section{(Pseudo-)Convexity}\label{sec:pseudoConv}
In this section we first collect the definition and basic properties of pseudo-convex sets that are used to prove Theorem~\ref{mthm:PlushIntBody}. As a general reference for pseudo-convex sets and plurisubharmonic functions, we refer to \cites{Hoermander1973, Krantz1992}.

Next, we prove Theorem~\ref{mthm:PlushIntBody} following the ideas of Berck~\cite{Berck2009} for his convexity theorem for $L_p$-intersection bodies. More precisely, we first establish concavity properties for complex $p$-moments of convex bodies using inequalities of Brunn--Minkowski type, which are then used to show that the reciprocal radial functions of $\IntBody_{C,p} K$ satisfy the sufficient conditions for pseudo-convexity in Theorem~\ref{thm:leviCondPseudoConvex}, where $K \in \convexbodiesO(\CC^n)$ is $\unitsurf^1$-invariant and has a smooth boundary. The general case then follows by approximation.

In the final part of this section we give examples in the range $-1<p<1$ of convex bodies $K$ that are not $\unitsurf^1$-invariant, such that $\IntBody_{C,p} K$ is not convex for some $C \in \convexbodiesO(\CC)$, showing that $\unitsurf^1$-invariance is a necessary condition. 


\subsection{Basic notions}

\noindent
First, recall that a function $\varphi: \Omega \to [-\infty, \infty)$, defined on an open subset $\Omega \subseteq \CC^n$, is called \emph{plurisubharmonic}, if
\begin{itemize}
	\item $\varphi$ is upper semi-continuous;
	\item for all $u, v \in \CC^n$, the map $z \mapsto \varphi(u + z v)$ is subharmonic where it is defined,
\end{itemize}
see, e.g., \cite{Hoermander1973}*{Def.~1.6.1 and 2.6.1}. Examples are given by all subharmonic and, hence, by all convex functions on $\CC^n$. Using this notion, pseudo-convex sets are defined as follows.

\begin{definition}[\cite{Hoermander1973}*{Def.~2.6.8}]\label{def:hartogPseudoConvex}
	An open, connected set $K \subseteq \CC^n$ is \emph{pseudo-convex}, if there exists a continuous, plurisubharmonic function $\varphi$ in $K$ such that the sets
	\begin{align*}
		\{z \in K: \varphi(z) < c\}, \quad c \in \RR,
	\end{align*}
	are all relatively compact in $K$.
\end{definition}

\noindent 
Note that this is also called \emph{Hartogs pseudo-convex} and equivalent to $K$ being a domain of holomorphy or holomorphically convex. For sets with more regular boundary, the \emph{Levi condition} yields an equivalent statement, which is more accessible: 

\begin{theorem}[\cite{Hoermander1973}*{Thm.~2.6.12}]\label{thm:leviCondPseudoConvex}
	Suppose that $K \subseteq \CC^n$ is an open set with \linebreak$C^2$-boundary, given by
	\begin{align*}
		K = \{u \in \CC^n: \rho(u)< 0\},
	\end{align*}
	where $\rho:\CC^n \to \RR$ is $C^2$ in a neighborhood of $\mathrm{cl}\, K$ and $\nabla \rho \neq 0$ on $\mathrm{bd}\, K$. Then $K$ is pseudo-convex, if and only if 
	\begin{align*}
		\Delta_z \rho(u+z v)|_{z = 0} \geq 0,
	\end{align*}
	for all $u \in \mathrm{bd}\, K$ and $v \in \CC^n$ with $\nabla \rho(u) \cdot v = 0$.
\end{theorem}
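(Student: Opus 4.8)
The plan is to prove the two implications of the equivalence separately. As a preliminary remark, note that with $z=s+\ii t$ one has
\[
\Delta_z\rho(u+zv)\big|_{z=0}=4\sum_{j,k}\partial_{w_j}\partial_{\bar w_k}\rho(u)\,v_j\overline{v_k},
\]
so the displayed quantity is (four times) the Levi form $L_\rho(u;v)$ of $\rho$ at $u$ in the direction $v$, where $w=(w_1,\dots,w_n)$ are the coordinates of $\CC^n$; moreover the side condition $\nabla\rho(u)\cdot v=0$ is equivalent to $\langle\nabla\rho(u),v\rangle=\langle\nabla\rho(u),\ii v\rangle=0$, i.e. to $v$ lying in the complex tangent space $T^\CC_u(\mathrm{bd}\,K)=T_u(\mathrm{bd}\,K)\cap\ii\,T_u(\mathrm{bd}\,K)$. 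I will also use the pointwise identity, valid for any $C^2$ function $g<0$,
\[
\sum_{j,k}\partial_{w_j}\partial_{\bar w_k}\bigl(-\log(-g)\bigr)\,v_j\overline{v_k}=\frac{L_g(\cdot\,;v)}{-g}+\frac{1}{g^{2}}\,\Bigl|\,\textstyle\sum_j\partial_{w_j}g\cdot v_j\,\Bigr|^{2},
\]
whose second summand is manifestly nonnegative.

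For \emph{necessity} I would argue by contraposition, via the continuity principle for analytic discs. Assume $L_\rho(p;v)<0$ for some $p\in\mathrm{bd}\,K$ and some complex tangent vector $v$, so in particular $\sum_j\partial_{w_j}\rho(p)\,v_j=0$. Since $\nabla\rho(p)\neq 0$, the $\CC$-linear functional $a\mapsto\sum_j\partial_{w_j}\rho(p)\,a_j$ on $\CC^n$ is nonzero, so I may choose $a\in\CC^n$ with $\sum_j\partial_{w_j}\rho(p)\,a_j=-\tfrac12\sum_{j,k}\partial_{w_j}\partial_{w_k}\rho(p)\,v_jv_k$. A second-order Taylor expansion of $\rho$ at $p$ along the analytic disc $\phi(\zeta)=p+\zeta v+\zeta^2 a$ then shows that the holomorphic linear and quadratic contributions cancel and $\rho(\phi(\zeta))=L_\rho(p;v)\,|\zeta|^2+o(|\zeta|^2)$; hence, for a small $r>0$, $\phi(\overline{r\DD}\setminus\{0\})\subseteq\mathrm{int}\,K$ while $\phi(0)=p\in\mathrm{bd}\,K$. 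Choosing $\nu\in\CC^n$ with $\langle\nabla\rho(p),\nu\rangle<0$ and setting $\phi_t=\phi+t\nu$, I obtain for every small $t>0$ an analytic disc with $\phi_t(\overline{r\DD})\subseteq\mathrm{int}\,K$, whose boundary circles $\phi_t(\partial(r\DD))$ all lie in one fixed compact set $L_0\subseteq\mathrm{int}\,K$, but with centres $\phi_t(0)=p+t\nu\to p\in\mathrm{bd}\,K$ as $t\to0^+$. If $K$ were pseudo-convex, Definition~\ref{def:hartogPseudoConvex} would provide a continuous plurisubharmonic $\varphi$ with all sublevel sets relatively compact in $K$; the maximum principle applied to the subharmonic functions $\zeta\mapsto\varphi(\phi_t(\zeta))$ forces $\varphi(\phi_t(0))\le\max_{L_0}\varphi=:c_0$ for all $t$, so all $\phi_t(0)$ remain in the relatively compact set $\{\varphi\le c_0\}$, contradicting $\phi_t(0)\to p\in\mathrm{bd}\,K$.

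For \emph{sufficiency} I would, assuming the Levi condition, construct a continuous plurisubharmonic exhaustion of $K$. Since $\mathrm{cl}\,K$ is compact, $|z|^2$ is plurisubharmonic and bounded on $\mathrm{cl}\,K$, so by forming a regularised maximum it suffices to produce a continuous plurisubharmonic function on a one-sided neighbourhood of $\mathrm{bd}\,K$ in $K$ that tends to $+\infty$ along $\mathrm{bd}\,K$. The natural candidate is $-\log(-\rho)$, which near $\mathrm{bd}\,K$ is comparable to $-\log d_K$, the negative logarithm of the distance to $\CC^n\setminus K$, since $d_K$ and $-\rho/|\nabla\rho|$ are comparable there. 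When the Levi condition holds \emph{strictly} — $L_\rho(u;v)>0$ for all $u\in\mathrm{bd}\,K$ and all $0\neq v\in T^\CC_u(\mathrm{bd}\,K)$ — the displayed identity shows that $-\log(-\rho)$ is plurisubharmonic on $\{-\delta<\rho<0\}$ for some $\delta>0$: for the complex tangential part of $v$ this uses the strict positivity of $L_\rho$, and for the normal part the manifestly nonnegative term $|\sum_j\partial_{w_j}\rho\cdot v_j|^2/\rho^2$, which dominates near $\mathrm{bd}\,K$; gluing with $|z|^2$ finishes this case. The general (possibly degenerate) case I would reduce to the strict one: either by approximating $K$ from the inside by strictly pseudo-convex domains with smooth boundary and using that an increasing union of pseudo-convex open sets is pseudo-convex (see \cite{Hoermander1973}*{Sec.~2.6}), or, avoiding defining functions altogether, by showing directly that $-\log d_K$ is plurisubharmonic on $K$ — a failure of the sub-mean-value inequality along a complex line would yield, through the associated Hartogs figure, an analytic disc with boundary in $K$ and centre outside $K$, and pushing this disc towards $\mathrm{bd}\,K$ would exhibit a boundary point at which the Levi form is nonpositive on a complex tangent vector, contrary to the hypothesis.

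The step I expect to be the main obstacle is precisely the \emph{sufficiency} direction in the degenerate case: when the Levi form of the given $\rho$ vanishes on some complex tangent vector, $-\log(-\rho)$ need not be plurisubharmonic for that $\rho$, and both the approximation by strictly pseudo-convex subdomains and the disc-pushing argument for $-\log d_K$ require delicate estimates near such boundary points — this is essentially the local content of the solution of the Levi problem. The necessity direction, by contrast, is elementary once the analytic disc $\phi(\zeta)=p+\zeta v+\zeta^2 a$ is set up; the only care needed there is to keep the boundary circles of the perturbed discs $\phi_t$ uniformly inside $K$ while letting their centres reach $\mathrm{bd}\,K$.
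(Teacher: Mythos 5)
The paper does not prove this statement: it is quoted verbatim from H\"ormander [Thm.~2.6.12] and used as a black box, so there is no internal proof to compare against. Evaluating your attempt on its own merits:

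Your necessity argument is correct and complete. The cancellation in the Taylor expansion of $\rho$ along $\phi(\zeta)=p+\zeta v+\zeta^2 a$ is set up correctly (the choice of $a$ kills the $\Re(\zeta^2\,\cdot\,)$ contribution coming from the holomorphic second derivative, while the term linear in $\zeta$ vanishes by the complex tangency of $v$), leaving $\rho(\phi(\zeta))=L_\rho(p;v)\,|\zeta|^2+o(|\zeta|^2)<0$ for small $\zeta\neq 0$. The translated family $\phi_t=\phi+t\nu$ then has boundary circles in a fixed compact $L_0\subseteq K$ and centres escaping to $\mathrm{bd}\,K$, and the maximum principle applied to $\zeta\mapsto\varphi(\phi_t(\zeta))$ for the exhaustion $\varphi$ from Definition~\ref{def:hartogPseudoConvex} gives the contradiction, exactly as you say.

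Your sufficiency argument, as you yourself flag, has a genuine gap at degenerate boundary points where the Levi form of $\rho$ vanishes on some complex tangent vector: there $-\log(-\rho)$ need not be plurisubharmonic for the \emph{given} $\rho$, and you only sketch two possible repairs. That concern is legitimate, but characterizing this step as ``essentially the local content of the solution of the Levi problem'' overstates the difficulty. The Levi problem proper (pseudoconvex $\Rightarrow$ domain of holomorphy) requires $\bar\partial$-machinery or Oka's methods; the sufficiency half of H\"ormander's Theorem~2.6.12 is an elementary, if delicate, computation. H\"ormander's own route passes through his Theorem~2.6.7, which asserts that an open $K$ is pseudoconvex if and only if $-\log d_K$ is plurisubharmonic on $K$; granting that equivalence, both implications of Theorem~2.6.12 reduce to a second-order Taylor expansion of $d_K$ (or equivalently of $-\rho/|\nabla\rho|$) near the $C^2$ boundary, and the degenerate case is absorbed by showing that the nonnegative normal contribution $\bigl|\sum_j\partial_{w_j}\rho\,v_j\bigr|^2/\rho^2$ dominates the tangential error terms via Cauchy--Schwarz. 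Carrying out that expansion, or simply invoking Theorem~2.6.7 as a lemma, would close the gap without any appeal to the Levi problem.
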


\noindent
The next theorem shows how to use approximation by sets with smooth boundaries to extend our results to sets with arbitrary boundaries.

\begin{theorem}[\cite{Hoermander1973}*{Thm.~2.6.9}]\label{thm:pseudoConvIntersection}
	Suppose that $K_i \subseteq \CC^n$, $i \in I$, are pseudo-convex sets for an index set $I$. Then the interior of $\bigcap_{i \in I}K_i$ is pseudo-convex.
\end{theorem}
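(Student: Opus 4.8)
The plan is to produce, for $\Omega:=\interior\bigcap_{i\in I}K_i$, a continuous plurisubharmonic exhaustion function in the sense of Definition~\ref{def:hartogPseudoConvex}, assembled from boundary distances $d_D(z):=\mathrm{dist}(z,\CC^n\setminus D)$. The one input I would take from the theory of pseudo-convex domains (see \cite{Hoermander1973}) is the implication: if an open set $D\subseteq\CC^n$ with $D\neq\CC^n$ is pseudo-convex, then $-\log d_D$ is plurisubharmonic on $D$. Granting this, the rest is formal bookkeeping with distance functions and suprema.

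First I would express $d_\Omega$ through the $d_{K_i}$. Since $\CC^n\setminus\interior S=\mathrm{cl}(\CC^n\setminus S)$ for any set $S$, and the distance to a set equals the distance to its closure, one has for $z\in\Omega$
\[
 d_\Omega(z)=\mathrm{dist}\Bigl(z,\ \mathrm{cl}\bigl(\textstyle\bigcup_{i\in I}(\CC^n\setminus K_i)\bigr)\Bigr)=\inf_{i\in I}\mathrm{dist}\bigl(z,\CC^n\setminus K_i\bigr)=\inf_{i\in I}d_{K_i}(z).
\]
Indices with $K_i=\CC^n$ contribute nothing and may be dropped; if all are dropped then $\Omega=\CC^n$ and $\varphi(z):=|z|^2$ already works, and if $\Omega=\emptyset$ there is nothing to prove, so I may assume $\emptyset\neq\Omega\subsetneq\CC^n$. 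From $\Omega\subseteq K_i$ one gets $d_{K_i}\geq d_\Omega>0$ on $\Omega$, so all terms above are finite there and, applying $-\log$,
\[
 -\log d_\Omega(z)=\sup_{i\in I}\bigl(-\log d_{K_i}(z)\bigr),\qquad z\in\Omega.
\]

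Next I would check that $-\log d_\Omega$ is plurisubharmonic on $\Omega$, i.e.\ (by the definition recalled in the text) upper semi-continuous with $z\mapsto-\log d_\Omega(u+zv)$ subharmonic for all $u,v$. Upper semi-continuity is immediate since $d_\Omega$ is $1$-Lipschitz and positive on the open set $\Omega$, so $-\log d_\Omega$ is continuous. Along any complex line, $-\log d_\Omega$ is the supremum of the functions $-\log d_{K_i}$, each plurisubharmonic on $K_i\supseteq\Omega$ by the imported fact and hence subharmonic along that line, and this supremum coincides with the continuous (so locally bounded) function $-\log d_\Omega$; since a supremum of subharmonic functions that is upper semi-continuous and locally bounded above is subharmonic, the claim follows. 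Then
\[
 \varphi(z):=-\log d_\Omega(z)+|z|^2,\qquad z\in\Omega,
\]
is continuous and plurisubharmonic, being a sum of a plurisubharmonic function and the (convex, hence plurisubharmonic) function $|z|^2$; and for $c\in\RR$ the set $\{z\in\Omega:\varphi(z)<c\}$ is contained in $\{z\in\CC^n:|z|^2<c,\ d_\Omega(z)>e^{-c}\}$, whose closure is a compact subset of $\Omega$, so the sublevel set is relatively compact in $\Omega$. By Definition~\ref{def:hartogPseudoConvex}, $\Omega$ is pseudo-convex. (If connectedness is taken as part of the definition, the restriction of $\varphi$ to each connected component of $\Omega$ is again a continuous plurisubharmonic exhaustion function there.)

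The only step with genuine content is the imported implication ``$D$ pseudo-convex $\Longrightarrow-\log d_D$ plurisubharmonic''; in a fully self-contained treatment this is where the work sits (one proves, via the continuity principle, that $-\log d_D(\cdot,w)$ is plurisubharmonic for each direction $w$, where $d_D(\cdot,w)$ is the distance to $\partial D$ along the complex line through $w$, and then takes the supremum over $|w|=1$). I would route the argument through $-\log d_D$ rather than through summing the exhaustion functions of the $K_i$ precisely because it absorbs an arbitrary, possibly uncountable, index set $I$ at no cost: the supremum over $I$ above is harmless. A Levi-condition alternative---exhausting $\Omega$ from inside by $C^2$-bounded domains and invoking Theorem~\ref{thm:leviCondPseudoConvex}---is conceivable but shifts the difficulty into making the approximation precise, so I would prefer the distance-function route.
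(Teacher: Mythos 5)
The paper does not prove this statement itself (it is quoted from H\"ormander), and your argument is precisely the standard proof from that reference: $d_\Omega=\inf_{i}d_{K_i}$ on $\Omega=\interior\bigcap_{i}K_i$, hence $-\log d_\Omega=\sup_i\bigl(-\log d_{K_i}\bigr)$ is plurisubharmonic as a continuous supremum of plurisubharmonic functions, and $\varphi=-\log d_\Omega+|z|^2$ is a continuous plurisubharmonic exhaustion. One detail needs a patch: $\varphi(z)<c$ does not force $|z|^2<c$, since $-\log d_\Omega(z)$ may be negative; instead fix $x_0\in\CC^n\setminus\Omega$ and use $d_\Omega(z)\le|z|+|x_0|$, so that $\varphi(z)\ge|z|^2-\log\bigl(|z|+|x_0|\bigr)\to\infty$ as $|z|\to\infty$, which together with $d_\Omega(z)>e^{-c}$ still shows the sublevel sets are relatively compact in $\Omega$.
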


\subsection{Brunn--Minkowski inequalities for complex moments}
For $K \in \convexbodies(\CC^n)$, $v \in \CC^n \setminus\{0\}$ and $p \geq 0$, the $p$-th asymmetric complex moment of $K$ is defined by
\begin{align*}
	\ComplPosReMom(K) = \int_{K \cap v^+} \Re(x\cdot v)^p dx, 
\end{align*}
where $v^+ = \{x \in \CC^n: \Re (x \cdot v) \geq 0 \}$. Note that clearly $\ComplPosReMom(K)$ is $(2n+p)$-homogeneous. A direct application of the Pr\'ekopa--Leindler inequality yields the following Brunn--Minkowski-type inequality for $\ComplPosReMom$ by Berck~\cite{Berck2009}.

\begin{proposition}\label{prop:BMComplMoments}
	Suppose that $p \geq 0$ and $v \in \CC^n \setminus\{0\}$. Then
	\begin{align*}
		\ComplPosReMom(K_0 + K_1)^{\frac{1}{2n+p}} \geq \ComplPosReMom(K_0)^{\frac{1}{2n+p}} + \ComplPosReMom(K_1)^{\frac{1}{2n+p}},
	\end{align*}
	for every $K_0, K_1 \in \convexbodies(\CC^n)$, such that $K_0 \cap v^+, K_1 \cap v^+ \neq \emptyset$.
\end{proposition}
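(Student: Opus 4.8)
The plan is to derive the inequality from the Pr\'ekopa--Leindler inequality on $\RR^{2n}\cong\CC^n$, combined with the standard homogenisation trick that turns its logarithmic-concavity conclusion into a $\tfrac1{2n+p}$-concavity statement, following Berck~\cite{Berck2009}. Set $m=2n+p$ and record three elementary observations: (i) since $x\mapsto\Re(x\cdot v)$ is $\RR$-linear and $tK\cap v^+=t(K\cap v^+)$, the moment is $m$-homogeneous, $\ComplPosReMom(tK)=t^m\ComplPosReMom(K)$ for $t>0$; (ii) $v^+$ is a closed convex cone, hence $(K_0\cap v^+)+(K_1\cap v^+)\subseteq(K_0+K_1)\cap v^+$; (iii) $\ComplPosReMom(K)$ is finite (continuous integrand on a compact set) and monotone under inclusion.

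Assume first $\ComplPosReMom(K_0),\ComplPosReMom(K_1)>0$, put $t_i=\ComplPosReMom(K_i)^{1/m}$, $T=t_0+t_1$, $\lambda=t_1/T$, and define on $\CC^n$
\[
 f(x)=\mathbbm{1}_{t_0^{-1}(K_0\cap v^+)}(x)\,\Re(x\cdot v)^p,\qquad g(y)=\mathbbm{1}_{t_1^{-1}(K_1\cap v^+)}(y)\,\Re(y\cdot v)^p,
\]
\[
 h(z)=\mathbbm{1}_{T^{-1}((K_0+K_1)\cap v^+)}(z)\,\Re(z\cdot v)^p.
\]
The substitution $x\mapsto t_i x$ together with (i) gives $\int f=\int g=1$ and $\int h=T^{-m}\ComplPosReMom(K_0+K_1)$. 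For the Pr\'ekopa--Leindler hypothesis, let $x,y$ lie in the supports of $f,g$; then $z=(1-\lambda)x+\lambda y=\tfrac1T(t_0x+t_1y)$ lies in $T^{-1}((K_0+K_1)\cap v^+)$ by (ii), and, since $\Re(x\cdot v),\Re(y\cdot v)\ge0$,
\[
 \Re(z\cdot v)=(1-\lambda)\Re(x\cdot v)+\lambda\Re(y\cdot v)\ \ge\ \Re(x\cdot v)^{1-\lambda}\,\Re(y\cdot v)^{\lambda}
\]
by weighted AM--GM; raising to the power $p\ge0$ yields $h(z)\ge f(x)^{1-\lambda}g(y)^{\lambda}$ (the remaining cases being trivial as $f,g,h\ge0$). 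Pr\'ekopa--Leindler then gives $\int h\ge(\int f)^{1-\lambda}(\int g)^{\lambda}=1$, i.e.\ $\ComplPosReMom(K_0+K_1)\ge T^m$, which is the assertion after extracting $m$-th roots.

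Finally, if, say, $\ComplPosReMom(K_0)=0$, choose $x_0\in K_0\cap v^+$; then $K_0+K_1\supseteq\{x_0\}+K_1$ and, as $\Re(x_0\cdot v)\ge0$ and $t\mapsto t^p$ is non-decreasing on $[0,\infty)$, observation (iii) yields $\ComplPosReMom(K_0+K_1)\ge\ComplPosReMom(\{x_0\}+K_1)\ge\int_{K_1\cap v^+}\Re(y\cdot v)^p\,dy=\ComplPosReMom(K_1)$, which is exactly the claimed inequality; the case where both moments vanish is trivial. (Equivalently, one may invoke the Borell--Brascamp--Lieb inequality with parameter $t=1/p$ for convex combinations and then pass to Minkowski sums via (i).) The only delicate point is the passage from the geometric-mean conclusion of Pr\'ekopa--Leindler to the $\tfrac1m$-th-root Brunn--Minkowski form, which is precisely what the normalisation by the $t_i$ and the choice $\lambda=t_1/(t_0+t_1)$ accomplish; once that scaling is in place, the power weight $\Re(\,\cdot\,\cdot v)^p$ is handled by AM--GM and the half-space restriction is harmless because $v^+$ is a cone.
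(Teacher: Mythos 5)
Your proof is correct and matches the approach the paper has in mind: the paper offers no proof of its own, stating only that the inequality follows by ``a direct application of the Pr\'ekopa--Leindler inequality'' as in Berck, and your argument is precisely that, with the standard rescaling $t_i=\ComplPosReMom(K_i)^{1/m}$, $\lambda=t_1/(t_0+t_1)$ that converts the geometric-mean output of Pr\'ekopa--Leindler into $1/m$-concavity. The auxiliary points you flag (the half-space $v^+$ is a convex cone so $(K_0\cap v^+)+(K_1\cap v^+)\subseteq(K_0+K_1)\cap v^+$; weighted AM--GM combined with monotonicity of $t\mapsto t^p$ on $[0,\infty)$; the translation argument for the degenerate case $\ComplPosReMom(K_i)=0$) are exactly the details needed and are handled correctly.
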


\noindent
Like the classical Brunn--Minkowski-inequality, Proposition~\ref{prop:BMComplMoments} directly implies an analogue of Brunn's concavity theorem for the moments of parallel sections by complex hyperplanes $H_{u,z}=\{x \in \CC^n: x \cdot u = z\}$, that is, for
\begin{align*}
	\mathcal{M}_{p,v}^{\Re, +, u}(K,z) = \int_{v^+ \cap (K \cap H_{u,z})} \Re(x \cdot v)^p dx,
\end{align*}
where $u,v \in \CC^n\setminus\{0\}$ are not contained in the same complex line.

\begin{corollary}\label{cor:brunnComplMoment}
	Suppose that $K \in \convexbodies(\CC^n)$, $p>0$ and let $u, v \in \CC^n \setminus\{0\}$ with $v \not \in \linspan^\CC\{u\}$. Then the function 
	\begin{align*}
		z \mapsto \mathcal{M}_{p,v}^{\Re, +, u}(K,z)^{\frac{1}{2n-2+p}}
	\end{align*}
	is concave on the set $\{z \in \CC: K \cap H_{u,z} \cap v^+ \neq \emptyset\}$.
\end{corollary}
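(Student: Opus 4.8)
The plan is to replay the classical deduction of Brunn's concavity theorem from the Brunn--Minkowski inequality, with Proposition~\ref{prop:BMComplMoments} playing the role of Brunn--Minkowski; the one new wrinkle is that the linear functional $x \mapsto x \cdot v$ becomes \emph{affine} once restricted to a complex hyperplane, and this affine shift must be absorbed.

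First I would trivialize the family of parallel hyperplanes. Put $e := u/\|u\|^2$, so $e \cdot u = 1$, and write each $x \in H_{u,z}$ uniquely as $x = z e + y$ with $y \in u^{\perp,\CC}$; the assignment $y \mapsto z e + y$ is a measure-preserving affine bijection $u^{\perp,\CC} \to H_{u,z}$, identifying $H_{u,z}$ with the complex subspace $u^{\perp,\CC} \cong \CC^{n-1}$. Decomposing $v = v_\perp + \mu u$ with $v_\perp \in u^{\perp,\CC}$, the hypothesis $v \notin \linspan^\CC\{u\}$ forces $v_\perp \neq 0$, and $y \cdot v = y \cdot v_\perp$ for $y \in u^{\perp,\CC}$, so that $(z e + y)\cdot v = z\,(e \cdot v) + y\cdot v_\perp$. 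Since only the real part enters, set $t(z) := \Re\!\bigl(z\,(e\cdot v)\bigr)$, a real-affine function of $z \in \CC \cong \RR^2$, and $a := v_\perp/\|v_\perp\|^2$, so $a \cdot v_\perp = 1$ and hence $\Re\bigl((y + t(z)a)\cdot v_\perp\bigr) = \Re(y\cdot v_\perp) + t(z)$. Translating the slice body $S_z := \{y \in u^{\perp,\CC} : z e + y \in K\}$ by $t(z)a$ then identifies
\[
\mathcal{M}_{p,v}^{\Re, +, u}(K,z) = \mathcal{M}_{p,v_\perp}^{\Re, +}\bigl(S_z + t(z)a\bigr) =: \mathcal{M}_{p,v_\perp}^{\Re, +}(L_z),
\]
where $\mathcal{M}_{p,v_\perp}^{\Re, +}$ denotes the asymmetric complex $p$-moment in $\CC^{n-1}$, which is $(2n-2+p)$-homogeneous.

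The geometric heart of the matter is the inclusion
\[
(1-\lambda)\,L_{z_0} + \lambda\, L_{z_1} \subseteq L_{z_\lambda}, \qquad z_\lambda := (1-\lambda)z_0 + \lambda z_1,\ \lambda \in [0,1],
\]
which holds because convexity of $K$ yields $(1-\lambda)S_{z_0} + \lambda S_{z_1} \subseteq S_{z_\lambda}$ (using that $z \mapsto ze$ is linear and $u^{\perp,\CC}$ is a subspace), while the translations combine as $(1-\lambda)t(z_0) + \lambda t(z_1) = t(z_\lambda)$ by affinity of $t$. The very same inclusion shows that the set $\{z : K \cap H_{u,z}\cap v^+\neq\emptyset\} = \{z : L_z \cap v_\perp^+\neq\emptyset\}$ is convex, so the statement is meaningful. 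Then I would just chain three facts: monotonicity of $\mathcal{M}_{p,v_\perp}^{\Re, +}$ under inclusion; Proposition~\ref{prop:BMComplMoments} applied in $\CC^{n-1}$ with direction $v_\perp$, legitimate precisely because $z_0, z_1$ lie in the domain above; and the $(2n-2+p)$-homogeneity of $\mathcal{M}_{p,v_\perp}^{\Re, +}$, used to pull the scalars $1-\lambda$ and $\lambda$ out. This yields
\[
\mathcal{M}_{p,v}^{\Re, +, u}(K,z_\lambda)^{\frac{1}{2n-2+p}} \geq (1-\lambda)\,\mathcal{M}_{p,v}^{\Re, +, u}(K,z_0)^{\frac{1}{2n-2+p}} + \lambda\,\mathcal{M}_{p,v}^{\Re, +, u}(K,z_1)^{\frac{1}{2n-2+p}},
\]
which is the asserted concavity.

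There is no deep obstacle; the care is all in bookkeeping the reduction to $u^{\perp,\CC}$ — in particular, that the additive constant $z\,(e\cdot v)$ coming from the restricted functional is converted into a translation of the body, so that Proposition~\ref{prop:BMComplMoments}, stated for the \emph{linear} functional $x \mapsto \Re(x\cdot v)$, genuinely applies, and that this translation can be taken along a single real parameter $t(z)$, keeping it additive under convex combinations.
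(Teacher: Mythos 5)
Your proof is correct and follows the same route as the paper, which simply states that the corollary ``is a direct consequence of Proposition~\ref{prop:BMComplMoments} and the fact that $(1-\lambda)(K\cap H_{u,z_0})+\lambda(K\cap H_{u,z_1})\subseteq K\cap H_{u,(1-\lambda)z_0+\lambda z_1}$.'' Your explicit reduction to $u^{\perp,\CC}$ and the translation by $t(z)a$ that converts the affine restriction of $x\mapsto\Re(x\cdot v)$ back into a linear functional is precisely the bookkeeping the paper leaves to the reader in order for Proposition~\ref{prop:BMComplMoments} (which is stated for full-dimensional bodies and a genuinely linear $\Re(x\cdot v)$, with exponent $2n+p$) to yield the $(2n-2+p)$-exponent on the slices.
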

\begin{proof}
	This is a direct consequence of Proposition~\ref{prop:BMComplMoments} and the fact that \linebreak$(1-\lambda) (K \cap H_{u,z_0}) + \lambda (K \cap H_{u, z_1}) \subseteq K \cap H_{u, (1-\lambda)z_0 + \lambda z_1}$ by convexity.
\end{proof}

\noindent
As a concave function on its compact support, $\mathcal{M}_{p,v}^{\Re, +,u}(K,\cdot)^{\frac{1}{2n-2+p}}$ thus possesses a maximum, which is, by the monotonicity of $t \mapsto t^{2n-2+p}$, also true for $\mathcal{M}_{p,v}^{\Re, +,u}(K,\cdot)$. The following lemma shows that by replacing $v$ by $v+\lambda u$, with $\lambda \in \CC$ suitable, we can ensure that, for smooth $K$, $z=0$ is a critical point of $\mathcal{M}_{p,v}^{\Re, +,u}(K,\cdot)$ and thus its maximum. Note that for smooth $K$, $\mathcal{M}_{p,v+\lambda u}^{\Re, +,u}(K,\cdot)$ is differentiable at $z=0$, since the complex parallel section function of $K$ is smooth at $z=0$ (see, e.g., \cite{Koldobsky2005}*{Lem.~2.4}).

In the proof, we denote by $H^\RR_{u,t}$ the (real) hyperplane $\{x \in \CC^n: \langle x, u \rangle = t\}$, $u \in \CC^n \setminus\{0\}, t \in \RR$.

\begin{lemma}\label{lem:derZeroComplMoment}
	Suppose that $K \in \convexbodiesO(\CC^n)$ is $\unitsurf^1$-invariant and has smooth boundary, $p>0$ and let $u, v \in \CC^n \setminus\{0\}$ with $v \not \in \linspan^\CC\{u\}$. Then there exists $\lambda \in \CC$ such that
	\begin{align*}
		\left.\nabla_z \mathcal{M}_{p,v+\lambda u}^{\Re, +,u}(K,z)\right|_{z=0} = 0.
	\end{align*}
\end{lemma}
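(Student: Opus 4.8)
The plan is to choose $\lambda$ so that the derivative of $z \mapsto \mathcal{M}_{p,v+\lambda u}^{\Re,+,u}(K,z)$ at $z=0$ vanishes, by writing this derivative explicitly and exploiting the $\unitsurf^1$-invariance of $K$ together with the freedom in $\lambda$. First I would set up coordinates: fix $u$ and write $z = s + it$ with $s,t \in \RR$, so that the complex hyperplane $H_{u,z}$ is the intersection of the two real hyperplanes $H^\RR_{u,s}$ and $H^\RR_{-iu,t}$ (using $\langle -ix,u\rangle = \Re(z\cdot\overline{\text{(something)}})$ — more precisely $x\cdot u = \langle x,u\rangle + i\langle -ix,u\rangle$). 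The gradient $\nabla_z$ at $z=0$ then has two real components, namely $\partial_s$ and $\partial_t$ of the function $(s,t)\mapsto \int_{v^+\cap(K\cap H_{u,s+it})}\Re(x\cdot(v+\lambda u))^p\,dx$. Since $K$ is smooth, $0\in\interior K$, and the complex parallel section function is smooth at the origin (the cited \cite{Koldobsky2005}*{Lem.~2.4}), each component is a genuine partial derivative and can be computed by differentiating under the integral sign, the boundary contributions from $\partial(K\cap H_{u,z})$ vanishing because $\Re(x\cdot v)$ is bounded on $K$ and the section varies smoothly; the $v^+$-cutoff contributes nothing at $z=0$ since on $\{x\cdot u = 0\}$ we have $x\cdot(v+\lambda u) = x\cdot v$ and the cutoff set does not move to first order.

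Next I would use the key structural fact: replacing $v$ by $v+\lambda u$ changes the integrand on $H_{u,z}$, for $z$ near $0$, only through the term $\Re(\bar\lambda z)$, because for $x\in H_{u,z}$ one has $x\cdot(v+\lambda u) = x\cdot v + \lambda(u\cdot u)\cdot$(no) — rather $x\cdot(v+\lambda u) = x\cdot v + \overline{\lambda}\,\overline{? }$; the clean statement (via $x\cdot(\lambda u) = \overline\lambda(x\cdot u) = \overline\lambda z$) is $\Re(x\cdot(v+\lambda u)) = \Re(x\cdot v) + \Re(\overline\lambda z)$. Thus $\mathcal{M}^{\Re,+,u}_{p,v+\lambda u}(K,z)$ is obtained from the $\lambda=0$ integrand by the shift $\Re(x\cdot v)\mapsto \Re(x\cdot v)+\Re(\overline\lambda z)$, and differentiating in $z$ at $0$ produces a linear expression in $\lambda$ of the form $\nabla_z\mathcal{M}^{\Re,+,u}_{p,v}(K,0) + p\,\Re(\overline\lambda\,\cdot)\,\int_{v^+\cap K\cap\{x\cdot u=0\}}\Re(x\cdot v)^{p-1}\,dx$, and the integral coefficient $c:=\int_{v^+\cap K\cap\{x\cdot u = 0\}}\Re(x\cdot v)^{p-1}\,dx$ is strictly positive since $0\in\interior K$ so $K\cap\{x\cdot u=0\}\cap v^+$ has nonempty relative interior and $\Re(x\cdot v)>0$ there. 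Writing the two real components of $\nabla_z\mathcal{M}^{\Re,+,u}_{p,v}(K,0)$ as $(a,b)$ and of $\Re(\overline\lambda\,\cdot)$ as the linear map $\lambda=(\lambda_1,\lambda_2)\mapsto(\lambda_1,-\lambda_2)$ (or similar, depending on sign conventions), the equation $\nabla_z\mathcal{M}^{\Re,+,u}_{p,v+\lambda u}(K,0)=0$ becomes $(a,b)+pc\,(\lambda_1,-\lambda_2)=0$, which has the unique solution $\lambda_1 = -a/(pc)$, $\lambda_2 = b/(pc)$. This furnishes the desired $\lambda\in\CC$.

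The role of $\unitsurf^1$-invariance — and where I'd be careful — is twofold. It guarantees that the relevant section sets are genuinely two-dimensional disks' worth of data so that $0$ is interior to the support $\{z: K\cap H_{u,z}\cap v^+\neq\emptyset\}$ (otherwise $\nabla_z$ at $0$ need not make sense as a two-sided derivative), and combined with smoothness it is precisely what is invoked through the Koldobsky lemma to legitimize differentiation under the integral and the vanishing of the boundary terms. The main obstacle I anticipate is the careful bookkeeping of the real-versus-complex identifications: getting the right relation between $\nabla_z$ (a $\CC$-valued or $\RR^2$-valued gradient), the expression $\Re(x\cdot(v+\lambda u))$, and the measure on the real codimension-$2$ slice $H_{u,z}$, so that the linear-in-$\lambda$ term genuinely has an invertible (indeed, up to sign, diagonal) coefficient matrix $pc\cdot I$. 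Once that linear algebra is pinned down correctly, solving for $\lambda$ is immediate; the positivity $c>0$ is the only analytic input and follows from $K\in\convexbodiesO(\CC^n)$. A complementary and perhaps cleaner route to the same conclusion is to note that by Corollary~\ref{cor:brunnComplMoment} the function $z\mapsto\mathcal{M}^{\Re,+,u}_{p,v+\lambda u}(K,z)$ is concave on its support for every $\lambda$, hence attains its maximum at some point $z_\lambda$; one then shows the map $\lambda\mapsto z_\lambda$ (or rather, that a shift in $\lambda$ translates the maximizer by an affine function of $\lambda$) is surjective onto a neighborhood of $0$, so some $\lambda$ places the maximum at $z=0$, where the gradient vanishes by smoothness — but the direct computation above is more transparent and avoids uniqueness-of-maximizer subtleties.
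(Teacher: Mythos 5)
Your proposal takes a genuinely different route from the paper. The paper reduces to one real dimension: fixing $K_0 = K\cap H^\RR_{iu,0}$, it invokes Berck's Lemma~3.6 (a real-variable statement for origin-symmetric convex bodies) twice --- once for the direction $v$ and once for $-iv$ --- and then crucially uses $\unitsurf^1$-invariance via the identity $K\cap H_{u,it}=i(K_0\cap H^\RR_{u,t})$ to stitch the two one-dimensional critical points into a single $\lambda=\lambda_1+i\lambda_2\in\CC$. Your proposal instead carries out the two-dimensional differentiation directly. The algebraic core of your argument is correct: on $H_{u,z}$ one has $\Re(x\cdot(v+\lambda u))=\Re(x\cdot v)+\Re(\overline\lambda z)$, and writing $g(z,s)=\int_{K\cap H_{u,z}}(\Re(x\cdot v)+s)_+^p\,dx$ so that $\mathcal{M}^{\Re,+,u}_{p,v+\lambda u}(K,z)=g(z,\Re(\overline\lambda z))$, the chain rule yields
\begin{align*}
\nabla_z \mathcal{M}^{\Re,+,u}_{p,v+\lambda u}(K,z)\big|_{z=0} = \nabla_z \mathcal{M}^{\Re,+,u}_{p,v}(K,z)\big|_{z=0} + pc\,(\lambda_1,\lambda_2), \quad c=\int_{K\cap H_{u,0}\cap v^+}\!\!\Re(x\cdot v)^{p-1}dx>0,
\end{align*}
which is a nondegenerate diagonal linear system in $(\lambda_1,\lambda_2)$; solving it gives the required $\lambda$. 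This buys a cleaner, self-contained argument that does not rely on importing Berck's lemma, and in fact does not seem to use $\unitsurf^1$-invariance at all.

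Two things you state along the way are, however, inaccurate and should be repaired. First, the boundary contribution from $\partial(K\cap H_{u,z})$ as $z$ moves does \emph{not} vanish --- $\Re(x\cdot v)_+^p$ is generically nonzero on $\partial K$ --- but this does not hurt the argument, since that contribution sits entirely in the $\lambda$-independent term $\nabla_z\mathcal{M}^{\Re,+,u}_{p,v}(K,0)$ and is simply absorbed into the constants $(a,b)$ you solve against; you should say this rather than claim the term vanishes. Second, your account of the role of $\unitsurf^1$-invariance is off: it is not needed to make $0$ interior to the support (that follows from $0\in\interior K$ and $v\notin\linspan^\CC\{u\}$), nor is it what Koldobsky's smoothness lemma requires. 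In the paper's proof, $\unitsurf^1$-invariance is used to guarantee that $K_0=K\cap H^\RR_{iu,0}$ is origin-symmetric (so Berck's lemma applies) and to identify $K\cap H_{u,it}$ with a rotated copy of a real slice of $K_0$; your direct approach bypasses both uses, which is precisely why it is a different proof. The remaining genuine obligation in your route is to justify rigorously that $z\mapsto\mathcal{M}^{\Re,+,u}_{p,v}(K,z)$ is $C^1$ at $z=0$ and that differentiation under the integral (including through the $s$-variable, where the integrand's derivative $p(\Re(x\cdot v)+s)_+^{p-1}$ is singular for $0<p<1$) is legitimate --- integrability of $\Re(x\cdot v)_+^{p-1}$ over the bounded slice, together with smoothness of $\partial K$ and transversality of $H_{u,z}$ with $\partial K$ near $z=0$, make this plausible, but you should spell it out rather than wave at the Koldobsky lemma, which addresses the unweighted parallel section function rather than the weighted moment.
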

\begin{proof}
	Without loss of generality, we may assume that $v \cdot u = 0$. Letting \linebreak$K_0 = K \cap H^\RR_{iu,0}$, by \cite{Berck2009}*{Lem.~3.6} applied in $H^\RR_{iu,0}$, there exist $\lambda_1, \lambda_2 \in \RR$ such that
	\begin{align*}
		t \mapsto \int_{K_0 \cap H^\RR_{u,t} \cap (v+\lambda_1u)^+} \langle x, v + \lambda_1 u\rangle^p dx 
	\end{align*}
	and 
	\begin{align*}
		t \mapsto \int_{K_0 \cap H^\RR_{u,t} \cap (-iv+\lambda_2 u)^+} \langle x, -iv + \lambda_2 u\rangle^p dx 
	\end{align*}
	have critical points at $t=0$. Set $\lambda = \lambda_1 + i \lambda_2$. Next, since for $x \cdot u = t \in \RR$, we have $\Re(x \cdot(v+\lambda u)) = \Re(x \cdot (v + \lambda_1 u)) = \langle x, v + \lambda_1 u\rangle$ and $K \cap H_{u,t} = K_0 \cap H^\RR_{u,t}$,
	\begin{align*}
		\mathcal{M}_{p,v+\lambda u}^{\Re, +,u}(K,t) = \int_{K \cap H_{u,t} \cap (v+\lambda u)^+} \!\!\!\!\!\!\!\!\!\!\Re(x \cdot (v + \lambda u))^p dx = \int_{K_0 \cap H^\RR_{u,t} \cap (v+\lambda_1u)^+}\!\!\!\!\!\!\!\!\!\! \langle x, v + \lambda_1 u\rangle^p dx,
	\end{align*}
	and, hence, $t \mapsto \mathcal{M}_{p,v+\lambda u}^{\Re, +,u}(K,t)$ has a critical point at zero. If $x \cdot u = it$, $t \in \RR$, then $\Re(x \cdot(v+\lambda u)) = \langle x, v + \lambda_2 iu\rangle$ and, by the $\unitsurf^1$-invariance of $K$, $K \cap H_{u,it} = i(K_0 \cap H^\RR_{u,t})$. Consequently, by letting $x = iy$,
	\begin{align*}
		\mathcal{M}_{p,v+\lambda u}^{\Re, +,u}(K,t)&= \int_{K \cap H_{u,it} \cap (v+\lambda u)^+} \!\!\!\!\!\!\!\!\!\!\!\!\!\!\Re(x \cdot (v + \lambda u))^p dx= \int_{i(K_0 \cap H^\RR_{u,t} \cap (-iv+\lambda_2u)^+)}\!\!\!\!\!\!\!\!\!\!\!\!\!\! \langle x, v + \lambda_2 iu\rangle^p dx\\
		&=\int_{K_0 \cap H^\RR_{u,t} \cap (-iv + \lambda_2 u)^+}\!\!\!\!\!\!\!\!\!\! \langle y, -iv+ \lambda_2 u\rangle^p dy,
	\end{align*}
	we conclude that also $t \mapsto \mathcal{M}_{p,v+\lambda u}^{\Re, +,u}(K,it)$ has a critical point at zero, which yields the claim.
\end{proof}

\noindent
Using symmetries, Lemma~\ref{lem:derZeroComplMoment} now directly translates to symmetric moments,
\begin{align*}
	\mathcal{M}_{2,v}^{|\cdot|,u}(K,z) = \int_{K \cap H_{u,z}} |x \cdot v|^2 dx,
\end{align*}
whenever $K$ is $\unitsurf^1$-invariant.

\begin{proposition}\label{prop:SecSymmMomentMaxAtZero}
	Suppose that $K \in \convexbodiesO(\CC^n)$ is $\unitsurf^1$-invariant and let $u, v \in \CC^n \setminus\{0\}$ with $v \not \in \linspan^\CC\{u\}$. Then there exists $\lambda \in \CC$ such that
	\begin{align*}
		z \mapsto \mathcal{M}_{2,v+\lambda u}^{|\cdot|,u}(K,z)
	\end{align*}	
	 is maximal at $z=0$. 
\end{proposition}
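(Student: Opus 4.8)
The plan is to split the symmetric second moment $\mathcal{M}_{2,w}^{|\cdot|,u}(K,\cdot)$ into asymmetric ones and then combine Lemma~\ref{lem:derZeroComplMoment} with Corollary~\ref{cor:brunnComplMoment} on the pieces. For $\zeta \in \CC$ one has the elementary identity $|\zeta|^2 = \sum_{c^4 = 1}\bigl(\Re(c\zeta)\bigr)_+^2$, where $t_+ = \max\{t,0\}$: the terms $c = \pm1$ contribute $(\Re\zeta)^2$ and the terms $c = \pm i$ contribute $(\Im\zeta)^2$. Setting $\zeta = x\cdot w$ and using $x\cdot(\overline c w) = c\,(x\cdot w)$, this becomes $|x\cdot w|^2 = \sum_{c^4 = 1}\bigl(\Re(x\cdot cw)\bigr)_+^2$, so that integrating over $K \cap H_{u,z}$ gives
\begin{align*}
	\mathcal{M}_{2,w}^{|\cdot|,u}(K,z) = \sum_{c^4 = 1}\int_{K\cap H_{u,z}\cap (cw)^+}\!\!\!\Re(x\cdot cw)^2\,dx = \sum_{c^4=1}\mathcal{M}_{2,cw}^{\Re,+,u}(K,z).
\end{align*}

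Now I would invoke the $\unitsurf^1$-invariance of $K$: the substitution $x = cy$, with $|c| = 1$, preserves $K$, has unit Jacobian, maps $H_{u,z}$ onto $H_{u,\overline c z}$ and satisfies $x\cdot cw = y\cdot w$, so $\mathcal{M}_{2,cw}^{\Re,+,u}(K,z) = \mathcal{M}_{2,w}^{\Re,+,u}(K,\overline c z)$. Reindexing $c \mapsto \overline c$ in the sum, we obtain
\begin{align*}
	\mathcal{M}_{2,w}^{|\cdot|,u}(K,z) = \sum_{c^4 = 1}\mathcal{M}_{2,w}^{\Re,+,u}(K, c z),
\end{align*}
i.e. four copies of the single function $z\mapsto\mathcal{M}_{2,w}^{\Re,+,u}(K,z)$ precomposed with the rotations $z\mapsto cz$. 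Since rotations fix the origin, it now suffices to choose $w$ so that this one function attains its maximum at $z = 0$.

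For this, apply Lemma~\ref{lem:derZeroComplMoment} with $p = 2$ to get $\lambda \in \CC$, and put $w := v + \lambda u$ (which is not in $\linspan^\CC\{u\}$, as $v$ is not), so that $\nabla_z\mathcal{M}_{2,w}^{\Re,+,u}(K,z)\big|_{z=0} = 0$. By Corollary~\ref{cor:brunnComplMoment} with $p = 2$, the function $z\mapsto\mathcal{M}_{2,w}^{\Re,+,u}(K,z)^{1/2n}$ is concave on its domain; that domain is convex and, since $K \in \convexbodiesO(\CC^n)$ and $v \notin \linspan^\CC\{u\}$, it contains $0$ in its interior, where the moment is strictly positive. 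A concave function with an interior critical point attains its maximum there, so $\mathcal{M}_{2,w}^{\Re,+,u}(K,\cdot)^{1/2n}$ --- and hence $\mathcal{M}_{2,w}^{\Re,+,u}(K,\cdot)$ --- is maximal at $0$; by the displayed decomposition, so is $\mathcal{M}_{2,v+\lambda u}^{|\cdot|,u}(K,\cdot)$.

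The delicate point is the differentiability used in Lemma~\ref{lem:derZeroComplMoment}, which needs $K$ to have smooth boundary (through smoothness of the complex parallel section function at the origin). For a general $\unitsurf^1$-invariant $K\in\convexbodiesO(\CC^n)$ one runs the argument for a sequence of smooth $\unitsurf^1$-invariant bodies $K_j \to K$; the resulting $\lambda_j$ stay bounded, being the critical points produced by Berck's lemma for the uniformly bounded $K_j$, so along a subsequence $\lambda_j \to \lambda$, and since "maximal at $0$" passes to the pointwise limit $\mathcal{M}_{2,v+\lambda_j u}^{|\cdot|,u}(K_j,\cdot) \to \mathcal{M}_{2,v+\lambda u}^{|\cdot|,u}(K,\cdot)$, the general case follows.
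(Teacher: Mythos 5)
Your proof is correct and follows essentially the same route as the paper: the identical four-term decomposition of $\mathcal{M}_{2,w}^{|\cdot|,u}(K,z)$ into rotated copies of the asymmetric moment $\mathcal{M}_{2,w}^{\Re,+,u}(K,\cdot)$, then Corollary~\ref{cor:brunnComplMoment} and Lemma~\ref{lem:derZeroComplMoment} with $p=2$ (plus the chain rule) for smooth $K$, followed by approximation. You merely flesh out two steps the paper leaves implicit --- deriving the decomposition from the pointwise identity $|\zeta|^2=\sum_{c^4=1}\bigl(\Re(c\zeta)\bigr)_+^2$ together with $\unitsurf^1$-invariance, and noting that the $\lambda_j$ stay bounded so that the maximality passes to the limit.
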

\begin{proof}
	First note that since $K$ is $\unitsurf^1$-invariant,
	\begin{align*}
		\mathcal{M}_{2,w}^{|\cdot|,u}(K,z) = \mathcal{M}_{2,w}^{\Re, +,u}(K,z) + \mathcal{M}_{2,w}^{\Re, +,u}(K,-z) + \mathcal{M}_{2,w}^{\Re, +,u}(K,iz) + \mathcal{M}_{2,w}^{\Re, +,u}(K,-iz),
	\end{align*}
	for every $w \in \CC^n\setminus\{0\}$, and we need to choose $\lambda \in \CC$ such that $\mathcal{M}_{2,v +\lambda u}^{\Re, +,u}(K,\cdot)$ attains its maximum at $z=0$. However, by Corollary~\ref{cor:brunnComplMoment}, $\mathcal{M}_{2,v +\lambda u}^{\Re, +,u}(K,\cdot)^{\frac{1}{2n}}$ is concave on $\{z \in \CC: K \cap H_{u,z} \cap (v +\lambda u)^+ \neq \emptyset\}$, and by Lemma~\ref{lem:derZeroComplMoment} (together with the chain rule), there exists $\lambda \in \CC$ such that $\mathcal{M}_{2,v +\lambda u}^{\Re, +,u}(K,\cdot)^{\frac{1}{2n}}$ is maximal at $z=0$ for smooth $K \in \convexbodiesO(\CC^n)$. Hence, the claim follows from the monotonicity of $t \mapsto t^{2n}$ and by approximating a general $K \in \convexbodiesO(\CC^n)$ by smooth bodies.
\end{proof}

\subsection{Proof of Theorem~\ref{mthm:PlushIntBody}}

In this section, we compute the necessary derivatives required in order to apply Theorem~\ref{thm:leviCondPseudoConvex} in the proof of Theorem~\ref{mthm:PlushIntBody}. 

\medskip

Recalling that the analytic family of distributions $r^q_+$, $\Re q > -1$, can be extended analytically to $-\Re q \not \in \NN_+$, and that, for $-2<\Re q < -1$, this extension is given by
\begin{align}\label{eq:DistrRqAnExt}
	\langle r^q_+, \Phi\rangle = \int_0^\infty r^{q} \left( \Phi(r) - \Phi(0) - r\Phi'(0)\right) dr, \quad \Phi \in C_c^\infty(\CC),
\end{align}
which clearly can be extended to all $\Phi \in C(\CC)$ with compact support, which are smooth in a neighborhood of zero. The main auxiliary result can then be stated as follows.

\begin{proposition}\label{prop:DerRadFctIntBodyD}
	Suppose that $p>-2$, $p \neq 0, -1$, and let $u, w \in \CC^n \setminus\{0\}$ with $w \not \in \linspan^\CC\{u\}$. Then
	\begin{align}\label{eq:derivRadFctSph}
			\Delta_z (\rho_{\IntBody_{\DD, p}K}(u+zw)^{-p})|_{z=0} = 2\pi p^2 \langle r^{p-1}_+,\mathcal{M}_{2,w}^{|\cdot|,u}(K,\cdot)\rangle
		\end{align}
	for every $\unitsurf^1$-invariant $K \in \convexbodiesO(\CC^n)$ with smooth boundary.
\end{proposition}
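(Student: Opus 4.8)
The plan is to verify \eqref{eq:derivRadFctSph} first for $p\ge 2$, where it reduces to an elementary differentiation under the integral sign, and then to propagate it to the whole range $-2<p$, $p\neq 0,-1$, by analytic continuation in $p$.

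\textbf{Reformulation.} Since $h_\DD=|\cdot|$, \eqref{eq:hCuEqualhCudot} together with the $(-1)$-homogeneity of radial functions gives $g_p(v):=\rho_{\IntBody_{\DD,p}K}(v)^{-p}=\int_K|x\cdot v|^p\,dx$ for every $v\in\CC^n\setminus\{0\}$, and by \eqref{eq:propComplParSec} also $g_p(v)=\int_\CC|t|^p\,\CompParSec{K}{v}(t)\,dt$. For $\unitsurf^1$-invariant $K$ the function $t\mapsto\CompParSec{K}{v}(t)$ is radial (a unit complex number $c$ maps $K\cap\{x\cdot v=t\}$ isometrically onto $K\cap\{x\cdot v=\bar c t\}$), and for $K$ with smooth boundary it is compactly supported and $C^\infty$ near $t=0$, locally uniformly in $v$ near $u$ (cf.~\cite{Koldobsky2005}*{Lem.~2.4}). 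Because $|t|^p$ is integrable near $t=0$ for $\Re p>-2$ and its pairing with $\CompParSec{K}{v}$ also regularizes the endpoint of the $t$-support, it follows that $z\mapsto g_p(u+zw)$ is $C^2$ near $z=0$ for every $\Re p>-2$ and that $p\mapsto\Delta|_z g_p(u+zw)|_{z=0}$ is analytic on $\{\Re p>-2\}$.

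\textbf{The case $p\ge 2$.} Here $|\cdot|^p\in C^2(\RR^2)$, so for compact $K$ the integrand $z\mapsto|x\cdot(u+zw)|^p=|x\cdot u+\bar z\,(x\cdot w)|^p$ and its first two $z$-derivatives are continuous in $(z,x)$ and bounded on compact sets, and one may differentiate twice under the $x$-integral. As $z\mapsto x\cdot(u+zw)$ is anti-holomorphic with derivative of modulus $|x\cdot w|$, the chain rule and $\Delta_{\RR^2}|\cdot|^p=p^2|\cdot|^{p-2}$ yield $\Delta|_z|x\cdot(u+zw)|^p|_{z=0}=p^2|x\cdot w|^2|x\cdot u|^{p-2}$, hence $\Delta|_z g_p(u+zw)|_{z=0}=p^2\int_K|x\cdot w|^2|x\cdot u|^{p-2}\,dx$. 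Slicing by the complex hyperplanes $H_{u,t}=\{x\in\CC^n:x\cdot u=t\}$ via Fubini rewrites this as $p^2\int_\CC|t|^{p-2}\mathcal{M}_{2,w}^{|\cdot|,u}(K,t)\,dt$, and the $\unitsurf^1$-invariance of $K$ makes $t\mapsto\mathcal{M}_{2,w}^{|\cdot|,u}(K,t)$ radial, so polar coordinates in $\CC$ give $\Delta|_z g_p(u+zw)|_{z=0}=2\pi p^2\int_0^\infty r^{p-1}\mathcal{M}_{2,w}^{|\cdot|,u}(K,r)\,dr=2\pi p^2\langle r^{p-1}_+,\mathcal{M}_{2,w}^{|\cdot|,u}(K,\cdot)\rangle$, which is \eqref{eq:derivRadFctSph} for $p\ge 2$.

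\textbf{Analytic continuation.} By the first step, applied now to the weighted section function, $\mathcal{M}_{2,w}^{|\cdot|,u}(K,\cdot)$ is, in its radial variable, compactly supported and smooth near $0$; being the radial profile of a function smooth at $0\in\CC$, it is even near $0$, so its linear Taylor coefficient vanishes. With $r^q_+$ the analytic family extended by \eqref{eq:DistrRqAnExt}, the map $p\mapsto\langle r^{p-1}_+,\mathcal{M}_{2,w}^{|\cdot|,u}(K,\cdot)\rangle$ is therefore meromorphic on $\{\Re p>-2\}$ with at worst a simple pole at $p=0$ --- the would-be pole at $p=-1$ is killed by the vanishing linear coefficient --- so the right-hand side of \eqref{eq:derivRadFctSph} is analytic on $\{\Re p>-2\}\setminus\{0,-1\}$, as is the left-hand side by the first step. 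Since both analytic functions agree on the half-line $\{p\ge 2\}$, they agree on the connected set $\{\Re p>-2\}\setminus\{0,-1\}$, which proves the proposition.

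\textbf{Main obstacle.} The delicate point is not the case $p\ge 2$ but the regularity and analyticity assertions of the first step: one must know that $z\mapsto\rho_{\IntBody_{\DD,p}K}(u+zw)^{-p}$ is genuinely $C^2$ near $z=0$ and depends analytically on $p$ all the way down to $\Re p>-2$. Differentiating the $x$-integral directly for $p<2$ is problematic, because the natural majorant --- the pointwise supremum over $z$ of the second $z$-derivative of $|x\cdot u+\bar z\,(x\cdot w)|^p$ --- fails to be integrable over $K$, the singular locus $\{x\cdot(u+zw)=0\}$ moving with $z$; this is circumvented by working with the complex parallel section function, whose smoothness near the origin (\cite{Koldobsky2005}*{Lem.~2.4}) together with the integrability of $|t|^p$ for $\Re p>-2$ handles both the singularity at $t=0$ and the endpoint of the support. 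Finally, the $\unitsurf^1$-invariance is exactly what turns the answer into $\langle r^{p-1}_+,\cdot\rangle$ evaluated at the radial second-moment function and, via the evenness of that function, removes the apparent pole at $p=-1$.
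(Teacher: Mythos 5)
Your overall strategy --- verify \eqref{eq:derivRadFctSph} directly for $p\ge 2$, then transport it to $\Re p>-2$ by analytic continuation --- is genuinely different from the paper's. The paper first derives the intermediate identity $\Delta_z\rho_{\IntBody_{\DD,p}K}(u+zw)^{-p}=p^2\int_K|x\cdot w|^2|x\cdot(u+zw)|^{p-2}\,dx$ for \emph{all} $p>0$ as an equality of tempered distributions in $z\in\CC$, via a Fourier-transform computation on $\CC$ resting on $|\cdot|^2\widehat{(|\cdot|^p)}=-p^2\widehat{(|\cdot|^{p-2})}$, continues analytically in the space of distributions, and only at the very end converts to a pointwise identity. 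Your $p\ge 2$ computation is correct (and avoiding Fourier analysis there is an honest simplification of that part), as is the observation that evenness of the radial profile of $\mathcal{M}_{2,w}^{|\cdot|,u}(K,\cdot)$ removes the apparent pole of $r^{p-1}_+$ at $p=-1$.

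The gap is in the analytic continuation. To invoke the identity theorem you need the left-hand side of \eqref{eq:derivRadFctSph} to be well-defined (i.e.\ $z\mapsto g_p(u+zw)$ to be $C^2$ at $z=0$) and \emph{holomorphic} in $p$ on $\{\Re p>-2\}\setminus\{0,-1\}$; you assert both from the fact that $\CompParSec{K}{v}$ is smooth near $t=0$. That is a statement about $t$-regularity for fixed $v$, whereas what is needed is joint $C^2$-regularity in $(v,t)$ including control of the moving boundary of $\supp\CompParSec{K}{v}$ as $v$ varies. More seriously, even granting $C^2$-regularity of $g_p$ for each individual $p$, holomorphy of $p\mapsto\Delta|_z g_p(u+zw)|_{z=0}$ requires locally uniform (in $p$) $C^2$-bounds on $g_p$ together with a vector-valued Cauchy/Morera argument --- a strictly stronger statement than the one-variable analyticity of $p\mapsto\langle r^p_+,\phi\rangle$ that you quote, and nothing in your sketch delivers it. The paper sidesteps both issues: the continuation is carried out at the distributional level, where holomorphy in $p$ is automatic, and the smoothness of $\rho_{\IntBody_{\DD,p}K}$ on $\CC^n\setminus\{0\}$ is obtained once, for each fixed $p$, not from the parallel section function but from the $\mathrm{U}(n)$-equivariance of $\JOp_{\DD,p}$, since a bounded equivariant operator on $C(\unitsurf^{2n-1})$ preserves $C^\infty(\unitsurf^{2n-1})$. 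Substituting that equivariance argument for the parallel-section appeal and supplying the vector-valued Cauchy estimate for holomorphy in $p$ would make your proof rigorous; alternatively, continuing at the distributional level as the paper does avoids the second issue altogether.
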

\begin{proof}
	Assume first that $p>0$. Since $z \mapsto \rho_{\IntBody_{\DD, p}K}(u+zw)^{-p}$ is a tempered distribution on $\CC$, we can consider the Fourier transform (denoted by $\hat\cdot$) of its Laplacian, applied to a Schwartz function $\varphi$ on $\CC$, that is,
	\begin{align*}
		\langle (\Delta_z \rho_{\IntBody_{\DD, p}K}(u+zw)^{-p})^{\widehat{ }}, \varphi \rangle\! =\! \langle \rho_{\IntBody_{\DD, p}K}(u+zw)^{-p}, \Delta_z \widehat{\varphi}\rangle \!=\! \langle \rho_{\IntBody_{\DD, p}K}(u+zw)^{-p},- \widehat{(|\cdot|^2 \varphi)}\rangle.
	\end{align*}
	By inserting the definition of $\rho_{\IntBody_{\DD, p} K}$, exchanging the order of integration, and letting $c = z - \overline{(x \cdot u)/(x \cdot w)}$
	\begin{align*}
		\langle \rho_{\IntBody_{\DD, p}K}(u+zw)^{-p}, \widehat{|\cdot|^2 \varphi}\rangle &= \int_{K} \int_\CC |x \cdot (u+zw)|^p \widehat{(|\cdot|^2 \varphi)}(z) dz dx\\
		&=\int_{K} \int_\CC |x \cdot w|^p |z + \overline{(x \cdot u)/(x \cdot w)}|^p \widehat{(|\cdot|^2 \varphi)}(z) dz dx\\
		&=\int_{K}  |x \cdot w|^p \int_\CC |c|^p \widehat{(|\cdot|^2 \varphi)}(c - \overline{(x \cdot u)/(x \cdot w)}) dc dx.
	\end{align*}
	Next, it is a direct computation that for $p\neq -2, -4, \dots$ (see, e.g., \cite{Gelfand1964}*{Sec.~II.3.3}), 
	\begin{align*}
		|\cdot|^2 \widehat{(|\cdot|^p)} = -p^2\widehat{(|\cdot|^{p-2})},
	\end{align*}
	and, consequently, the previous integral simplifies to
	\begin{align*}
		-p^2\int_{K}  |x \cdot w|^p \int_\CC |c|^{p-2} \widehat{\varphi}(c - \overline{(x \cdot u)/(x \cdot w)}) dc dx \\
		= -p^2\int_\CC \int_{K} |x \cdot w|^2  |x \cdot(u + zw)|^{p-2}dx\, \widehat{\varphi}(z) dz.
	\end{align*}
	By taking the inverse Fourier transform, we conclude that
	\begin{align}\label{eq:prfDerRadFctIntBodyDistrEqu}
		\Delta_z \rho_{\IntBody_{\DD, p}K}(u+zw)^{-p} = p^2 \int_{K} |x \cdot w|^2  |x \cdot(u + zw)|^{p-2}dx
	\end{align}
	as tempered distributions. By
	\begin{align*}
		\langle \Delta_z \rho_{\IntBody_{\DD, p}K}(u+zw)^{-p}, \varphi \rangle = \langle \rho_{\IntBody_{\DD, p}K}(u+zw)^{-p}, \Delta_z\varphi \rangle,
	\end{align*}
	$\varphi \in C_c^\infty(\CC)$, and since $\rho_{\IntBody_{\DD, p}K}(u+zw)^{-p}$ is analytic in $p$, the left-hand side of \eqref{eq:prfDerRadFctIntBodyDistrEqu} is an analytic family of distributions (in $z \in \CC$). Rewriting the right-hand side of \eqref{eq:prfDerRadFctIntBodyDistrEqu} by Fubini's theorem, and using polar coordinates on $\CC$ and the $\unitsurf^1$-invariance of $K$,
	\begin{align*}
		p^2 \int_{K} |x \cdot w|^2  |x \cdot(u + zw)|^{p-2}dx &= p^2 \int_\CC |\zeta|^{p-2} \int_{K \cap H_{u+zw,\zeta}} |x \cdot w|^2 dx d\zeta\\
		&= p^2 \int_0^\infty r^{p-1} \int_{\unitsurf^1}\int_{K \cap H_{u+zw,rc}} |x \cdot w|^2 dx dc dr \\
		&= 2\pi p^2 \int_0^\infty r^{p-1} \int_{K \cap H_{u+zw,r}} |x \cdot w|^2 dx dr\\
		&= 2\pi p^2 \langle r_+^{p-1}, \mathcal{M}_{2,w}^{|\cdot|,u+zw}(K,r) \rangle_r,
	\end{align*}
	we conclude that also the right-hand side of \eqref{eq:prfDerRadFctIntBodyDistrEqu} is an analytic family of distributions. The uniqueness of analytic continuation therefore implies that
	\begin{align*}
		\langle \Delta_z \rho_{\IntBody_{\DD, p}K}(u+zw)^{-p}, \varphi \rangle = 2\pi p^2 \langle \langle r_+^{p-1}, \mathcal{M}_{2,w}^{|\cdot|,u+zw}(K,r) \rangle_r, \varphi \rangle_z,
	\end{align*}
	for all $\varphi \in C_c^\infty(\CC)$ and $p > -2$, $p \neq 0, -1$. Note that since (for $p>0$)
	\begin{align*}
		\langle \langle r_+^{p-1}, \mathcal{M}_{2,w}^{|\cdot|,u+zw}(K,r) \rangle_r, \varphi \rangle_z &= \int_\CC \langle r_+^{p-1}, \mathcal{M}_{2,w}^{|\cdot|,u+zw}(K,r) \rangle_r \varphi (z) dz \\
		&=\langle r_+^{p-1}, \langle\mathcal{M}_{2,w}^{|\cdot|,u+zw}(K,r),\varphi \rangle_z \rangle_r
	\end{align*}
	the analytic continuation of $\langle r_+^{p-1}, \mathcal{M}_{2,w}^{|\cdot|,u+zw}(K,r) \rangle_r$ is given by
	\begin{align*}
		\langle \langle r_+^{p-1}, \mathcal{M}_{2,w}^{|\cdot|,u+zw}(K,r) \rangle_r, \varphi \rangle_z = \int_0^\infty r_+^{p-1} \left( \langle\mathcal{M}_{2,w}^{|\cdot|,u+zw}(K,r),\varphi \rangle_z \right. \\ \left.- \langle\mathcal{M}_{2,w}^{|\cdot|,u+zw}(K,0),\varphi \rangle_z  - \langle r \left.\frac{\partial}{\partial r}\right|_{r=0}\mathcal{M}_{2,w}^{|\cdot|,u+zw}(K,r),\varphi \rangle_z \right) dr\\
		= \langle \langle r_+^{p-1},\mathcal{M}_{2,w}^{|\cdot|,u+zw}(K,r)\rangle_r, \varphi \rangle_z .
	\end{align*}
	Since $K=-K$, $\mathcal{M}_{2,w}^{|\cdot|,u+zw}(K,r)$ is even (in $r$), the derivative at $r=0$ vanishes. Consequently,
	\begin{align}\label{eq:prfDerRadFctIntBodyDistrEqu2}
		\Delta_z \rho_{\IntBody_{\DD, p}K}(u+zw)^{-p} &= 2\pi p^2  \langle r_+^{p-1},\mathcal{M}_{2,w}^{|\cdot|,u+zw}(K,r)\rangle_r \\
		&= 2\pi p^2\int_0^\infty r^{p-1} \left(\mathcal{M}_{2,w}^{|\cdot|,u+zw}(K,r) - \mathcal{M}_{2,w}^{|\cdot|,u+zw}(K,0)\right) dr,\nonumber
	\end{align}
	as distributions. Next, observe that since the operator $\JOp_{\DD, p}$ commutes with the action of $\mathrm{U}(n)$ on $\unitsurf^{2n-1}$, $\JOp_{\DD, p}$ maps $C^\infty(\unitsurf^{2n-1})$ to itself. Consequently, by \eqref{eq:IntBodyByJOp} and as $\rho_{\IntBody_{\DD,p} K}(x)$ is strictly positive for $x \neq 0$, $\rho_{\IntBody_{\DD,p} K}$ is smooth in $\CC^n \setminus\{0\}$, whenever $K \in \convexbodiesO(\CC^n)$ has a smooth boundary.
	
	As the right-hand side of \eqref{eq:prfDerRadFctIntBodyDistrEqu2} is also continuous in $z$, both sides of \eqref{eq:prfDerRadFctIntBodyDistrEqu2} coincide as functions (as $u+zw \neq 0$ for all $z \in \CC$, by assumption). Evaluating at $z=0$, we obtain
	\begin{align*}
		\Delta_z \rho_{\IntBody_{\DD, p}K}(u+zw)^{-p}|_{z=0} = 2 \pi p^2\langle r_+^{p-1}, \mathcal{M}_{2,w}^{|\cdot|,u}(K,\cdot)\rangle,
	\end{align*}
	which yields the claim.
\end{proof}

\noindent
The last ingredient of the proof of Theorem~\ref{mthm:PlushIntBody} is the following result from elementary calculus, included for the reader's convenience.

\begin{lemma}\label{lem:LaplVersch}
	Suppose that $F \in C^\infty(\CC^n\setminus\{0\})$ is non-negative and $F(u)>0$ for $u \neq 0$, one-homogeneous and $\unitsurf^1$-invariant, that is, $F(zw) = |z|F(w)$, $z \in \CC, w \in \CC^n$, and let $u, v \in \CC^n \setminus\{0\}$ with $\nabla F(u) \cdot v = 0$. Then
	\begin{align*}
		\left.\Delta\right|_{z=0} F(u + zw)^p  = p^2 |\lambda|^2 F(u)^p + p F(u)^{p-1} \left.\Delta\right|_{z=0} F(u + zv),
	\end{align*}
	where $w = v + \lambda u$, $\lambda \in \CC$, and the derivatives are with respect to $z \in \CC$.
\end{lemma}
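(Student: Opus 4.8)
The plan is to deduce the formula from a holomorphic change of variables in the $z$-plane that trades the direction $w=v+\lambda u$ for the direction $v$, using only the assumed identity $F(cw')=|c|\,F(w')$ for $c\in\CC$, $w'\in\CC^n$. Put $a(z)=1+z\lambda$ and $\psi(z)=z/a(z)$. From $u+zw=(1+z\lambda)u+zv=a(z)\bigl(u+\psi(z)v\bigr)$ one gets, for $z$ in a neighbourhood of $0$ (where $a(z)\neq0$ and, since $u\neq0$, also $u+\psi(z)v\neq0$),
\begin{align*}
 F(u+zw)=|a(z)|\,h\bigl(\psi(z)\bigr),\qquad h(\zeta):=F(u+\zeta v).
\end{align*}
Since $a$ stays close to $1$ for small $z$, its principal power $a^{p}$ is a well-defined holomorphic function near $0$ with $a^{p}(0)=1$, $(a^{p})'(0)=p\lambda$ and $|a^{p}|=|a|^{p}$; and $\psi$ is holomorphic near $0$ with $\psi(0)=0$, $\psi'(0)=1$. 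Hence $F(u+zw)^{p}=|a^{p}(z)|\cdot h\bigl(\psi(z)\bigr)^{p}$ near $z=0$.

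Next I would record three elementary facts about the planar Laplacian $\Delta$. \textbf{(i)} For a nowhere-vanishing holomorphic function $b$ one has $\Delta|b|=|b'|^{2}/|b|$ (equivalently, $\log|b|$ is harmonic); taking $b=a^{p}$ gives $\Delta|a^{p}|=p^{2}|\lambda|^{2}$ at $z=0$. \textbf{(ii)} For holomorphic $\psi$ and a real $C^{2}$ function $\phi$ the conformality identity $\Delta(\phi\circ\psi)=|\psi'|^{2}\,\bigl((\Delta\phi)\circ\psi\bigr)$ holds, and the real differential $D\psi(0)$ is multiplication by $\psi'(0)=1$; hence $\nabla(\phi\circ\psi)(0)=\nabla\phi(0)$ and $\Delta(\phi\circ\psi)(0)=\Delta\phi(0)$. \textbf{(iii)} The hypothesis $\nabla F(u)\cdot v=0$ is equivalent to $\nabla h(0)=0$: with $z=z_{1}+iz_{2}$ we have $u+zv=u+z_{1}v+z_{2}(iv)$, so the two partial derivatives of $h$ at $0$ are $\langle\nabla F(u),v\rangle=\real\bigl(\nabla F(u)\cdot v\bigr)$ and $\langle\nabla F(u),iv\rangle=\langle-i\nabla F(u),v\rangle=\imag\bigl(\nabla F(u)\cdot v\bigr)$, both of which vanish.

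I would then expand $\Delta\bigl(F(u+zw)^{p}\bigr)$ at $z=0$ using the product rule $\Delta(\Phi\Psi)=(\Delta\Phi)\Psi+2\,\nabla\Phi\cdot\nabla\Psi+\Phi\,\Delta\Psi$, with $\Phi=|a^{p}|$ and $\Psi=\bigl(h\circ\psi\bigr)^{p}$. By \textbf{(i)}, $\Phi(0)=1$ and $\Delta\Phi(0)=p^{2}|\lambda|^{2}$. For $\Psi$, differentiating $t\mapsto t^{p}$ and using \textbf{(ii)} and \textbf{(iii)}: $\Psi(0)=F(u)^{p}$; $\nabla\Psi(0)=pF(u)^{p-1}\nabla(h\circ\psi)(0)=pF(u)^{p-1}\nabla h(0)=0$; and since the term carrying $|\nabla(h\circ\psi)(0)|^{2}$ also drops out, $\Delta\Psi(0)=pF(u)^{p-1}\Delta(h\circ\psi)(0)=pF(u)^{p-1}\Delta h(0)=pF(u)^{p-1}\left.\Delta\right|_{z=0}F(u+zv)$. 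As $\nabla\Psi(0)=0$ the cross term vanishes, so
\begin{align*}
 \left.\Delta\right|_{z=0}F(u+zw)^{p}=\Delta\Phi(0)\,\Psi(0)+\Phi(0)\,\Delta\Psi(0)=p^{2}|\lambda|^{2}F(u)^{p}+pF(u)^{p-1}\left.\Delta\right|_{z=0}F(u+zv),
\end{align*}
which is the asserted identity. (For $\lambda=0$ the change of variables is trivial, and the claim reduces to $\Delta(h^{p})=ph^{p-1}\Delta h+p(p-1)h^{p-2}|\nabla h|^{2}$ at $z=0$ together with $\nabla h(0)=0$.)

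The argument is essentially bookkeeping, and the two points needing care are: verifying that the factorization $F(u+zw)=|a(z)|\,h(\psi(z))$ holds on a genuine neighbourhood of $z=0$ (one uses that $F$ is smooth and positive away from the origin and that $u$, $u+\zeta v$ are nonzero for small $\zeta$); and the identification in \textbf{(iii)} of the complex orthogonality $\nabla F(u)\cdot v=0$ with the vanishing of the full real gradient $\nabla h(0)$, which is precisely what simultaneously annihilates the cross term and the $|\nabla(\cdot)|^{2}$-term in the chain rule for $(h\circ\psi)^{p}$.
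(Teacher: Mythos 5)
Your proof is correct, but it follows a genuinely different route than the paper's. The paper computes $\Delta|_{z=0}$ as the sum of the two second directional derivatives along $w$ and $iw$, first extracting from one-homogeneity and $\unitsurf^1$-invariance the pointwise identities $\langle\nabla F(u),u\rangle=F(u)$, $\langle\nabla F(u),iu\rangle=0$, $d^2F(u)u=0$, $d^2F(u)iu=i\nabla F(u)$, and then substituting $x=v+\lambda u$ and $x=i(v+\lambda u)$ into the chain-rule formula for $\tfrac{d^2}{dt^2}\big|_{t=0}F(u+tx)^p$ and summing. You instead push the whole $\unitsurf^1$-invariance into a single factorization $u+zw=a(z)\bigl(u+\psi(z)v\bigr)$ with $a(z)=1+z\lambda$ holomorphic, giving $F(u+zw)^p=|a^p(z)|\,(h\circ\psi)^p(z)$, and then the two terms of the claim emerge structurally: $\Delta|a^p|(0)=|(a^p)'(0)|^2/|a^p(0)|=p^2|\lambda|^2$ by harmonicity of $\log|a^p|$, while the Möbius map $\psi$ is conformal with $\psi'(0)=1$, so it leaves both gradient and Laplacian at $0$ unchanged. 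The orthogonality hypothesis $\nabla F(u)\cdot v=0$ plays the same role in both arguments — you correctly translate it to $\nabla h(0)=0$, which simultaneously kills the cross term in the product rule and the $|\nabla h|^2$-term in $\Delta(h^p)$, just as in the paper it annihilates the terms proportional to $\langle\nabla F(u),v\rangle$ and $\langle\nabla F(u),iv\rangle$. Your version buys a cleaner separation of the ``scaling'' contribution ($p^2|\lambda|^2 F(u)^p$) from the ``tangential'' contribution ($pF(u)^{p-1}\Delta|_{z=0}F(u+zv)$) and avoids computing $d^2F(u)$ at all; the paper's direct calculation is more explicit and perhaps more readily checked line by line. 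One minor remark: the hypothesis should read $F\in C^\infty(\CC^n\setminus\{0\})$ (a positive one-homogeneous function cannot be smooth at the origin), but this affects both arguments equally and is an imprecision in the statement, not in your proof.
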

\begin{proof}
	First note that by one-homogeneity and $\unitsurf^1$-invariance,
	\begin{align}\label{eq:prfLemDerivHomFct}
		\langle \nabla F(u), u\rangle = F(u) \quad \text{ and } \quad \langle \nabla F(u), iu \rangle = 0,
	\end{align}
	and, by differentiating the equalities in \eqref{eq:prfLemDerivHomFct},
	\begin{align}\label{eq:prfLemDerivHomFctSec}
		d^2F(u)u = 0 \quad \text{ and } \quad d^2 F(u) iu = i \nabla F(u).
	\end{align}
	Next, computing by the chain rule, for $x \in \CC^n \setminus \{0\}$ arbitrary, yields
	\begin{align*}
		\ddt\!\!\!\!\! F(u+tx)^p = p(p-1)F(u)^{p-2} \langle \nabla F(u),x\rangle^2 + pF(u)^{p-1} \langle x, d^2 F(u) x\rangle.
	\end{align*}
	Letting $x = v+\lambda u$ and applying \eqref{eq:prfLemDerivHomFct}, \eqref{eq:prfLemDerivHomFctSec} and the assumptions on $v$,
	\begin{align*}
		\ddt\!\!\!\!\! F(u+tw)^p = p(p-1)F(u)^{p} (\Re \lambda)^2 + pF(u)^{p-1} \langle v, d^2 F(u) v\rangle + pF(u)^{p} (\Im \lambda)^2,
	\end{align*}
	and for $x = i(v+\lambda u)$,
	\begin{align*}
		\ddt\!\!\!\!\! F(u+tiw)^p = p(p-1)F(u)^{p} (\Im \lambda)^2 + pF(u)^{p-1} \langle iv, d^2 F(u) iv\rangle + pF(u)^{p} (\Re \lambda)^2,
	\end{align*}
	which yields the claim, when summed up.
\end{proof}

\noindent
We are now ready to prove Theorem~\ref{mthm:PlushIntBody}.

\begin{proof}[Proof of Theorem~\ref{mthm:PlushIntBody}]
	By~\eqref{eq:S1invariantC=D1} and~\eqref{eq:S1invariantC=D2}, we can assume without loss of generality that $C = \DD$. Moreover, by Theorem~\ref{prop:compLp=realLp} (which is proved independently in Section~\ref{sec:ProofIntIneq}) and Theorem~\ref{thm:berckConvexityLpIntBody}, we only need to consider $-2<p<-1$. 
	
	Let now $K \in \convexbodiesO(\CC^n)$ be $\unitsurf^1$-invariant and assume first that its radial function $\rho_K$ is smooth in $\CC^n \setminus \{0\}$. Noting, as before, that $\rho_{\IntBody_{\DD,p} K}$ is smooth in $\CC^n \setminus\{0\}$, and
	\begin{align*}
		\interior \IntBody_{\DD,p} K = \{u \in \CC^n: \rho_{\IntBody_{\DD,p} K}(u)^{-1} - 1 < 0\},
	\end{align*}
	by Theorem~\ref{thm:leviCondPseudoConvex}, we need to show that 
	\begin{align*}
		\Delta_z \left(\rho_{\IntBody_{\DD,p} K}(u+zv)^{-1}\right)|_{z=0} \geq 0
	\end{align*}
	for all $u \in \mathrm{bd}\, \IntBody_{\DD,p} K$ and $\nabla (\rho_{\IntBody_{\DD,p} K}^{-1})(u) \cdot v = 0$.
	
	Therefore, let $u \in \mathrm{bd}\, \IntBody_{\DD,p} K$ be fixed and take $v \in \CC^n \setminus \{0\}$ arbitrary such that $\nabla (\rho_{\IntBody_{\DD,p} K}^{-1}) \cdot v = 0$. If $v = \zeta u$ for some $\zeta \in \CC$, then, since $\rho_{\IntBody_{\DD,p} K}(u)=1$, the \linebreak$\unitsurf^1$-invariance and homogeneity of the radial function imply that
	\begin{align*}
		\rho_{\IntBody_{\DD,p} K}(u+zv)^{-1} = |1+z\zeta|,
	\end{align*}
	and one directly sees that $\Delta_z |1+z\zeta| \geq 0$ at $z=0$. If $v \not \in \linspan^\CC\{u\}$, by Lemma~\ref{lem:LaplVersch},
	\begin{align*}
		\Delta_z \left(\rho_{\IntBody_{\DD,p} K}(u+zv)^{-1}\right)|_{z=0} + p|\lambda|^2 = \frac{1}{p}\Delta_z \left(\rho_{\IntBody_{\DD,p} K}(u+zw)^{-p}\right)|_{z=0},
	\end{align*}
	with $w = v + \lambda u$, for some $\lambda \in \CC$ to be chosen later, which, by Proposition~\ref{prop:DerRadFctIntBodyD} is equal to
	\begin{align*}
		2\pi p \langle r^{p-1}_+, \mathcal{M}_{2,w}^{|\cdot|,u}(K,\cdot)\rangle. 
	\end{align*}
	Denoting $\Phi(r) = \mathcal{M}_{2,w}^{|\cdot|,u}(K,r)$, we conclude from \eqref{eq:DistrRqAnExt} that
	\begin{align*}
		\Delta_z \left(\rho_{\IntBody_{\DD,p} K}(u+zv)^{-1}\right)|_{z=0} + p|\lambda|^2 = 2 \pi p \int_{0}^\infty r^{p-1} (\Phi(r) - \Phi(0) - r\Phi'(0)) dr.
	\end{align*}
	Since $K$ is origin-symmetric, $\Phi$ is even, and, hence, $\Phi'(0) = 0$. Moreover, by Proposition~\ref{prop:SecSymmMomentMaxAtZero}, we can choose $\lambda \in \CC$ such that $\Phi(r) \leq \Phi(0)$ for all $r>0$. As $p<0$, we conclude that
	\begin{align*}
		\Delta_z \left(\rho_{\IntBody_{\DD,p} K}(u+zv)^{-1}\right)|_{z=0} \geq 0,
	\end{align*}
	that is, $\interior \IntBody_{\DD,p} K$ is pseudo-convex.
	
	For general $\unitsurf^1$-invariant $K \in \convexbodiesO(\CC^n)$, we approximate $K$ by smooth $\unitsurf^1$-invariant convex bodies $K_j$, $j \in \NN$ such that $K \subseteq K_j$ for all $j \in \NN$. By the first part of the proof and the monotonicity of $\IntBody_{\DD,p}$,
	\begin{align*}
		\IntBody_{\DD, p} K = \bigcap_{j \in \NN} \IntBody_{\DD, p} K_j,
	\end{align*}
	where all $\interior \IntBody_{\DD, p} K_j$ are pseudo-convex, and the claim follows by Theorem~\ref{thm:pseudoConvIntersection}.
\end{proof}

\subsection{Counterexamples to convexity}\label{sec:counterExConv}
In the proof of Theorem~\ref{mthm:PlushIntBody}, $\unitsurf^1$-invariance of the convex bodies played a critical role. It is therefore a natural question to ask whether this is a particular aspect of the proof or reflects an underlying principle. In this section, we give a (partial) answer to this by providing examples in the range $-1\leq p<1$ of convex bodies which are not $\unitsurf^1$-invariant and have non-convex, complex $L_p$-intersection bodies.

These examples are obtained by considering sequences of ellipsoids, whose complex $L_p$-intersection bodies converge to a non-convex star body. The key ingredient of this argument is the following generalization of (parts of) \cite{Grinberg1999}*{Lem.~6.3}, proved using similar arguments.

\begin{lemma}\label{lem:seqEllDeltaNewContinuous}
	Suppose that $p \geq -1$ and let $\bar e \in \unitsurf^{2n-1}$. Then there exists a sequence of origin-symmetric ellipsoids $E_j \subseteq \CC^n, j \in \NN$, such that
	\begin{align}\label{eq:seqEllDeltaNew}
		\lim_{j \to \infty}\int_{\unitsurf^{2n-1}} f(u) \rho_{E_j}(u)^{2n+p} du = \frac{1}{2}(f(\bar e) + f(-\bar e)),
	\end{align}
	for all $f \in C(\unitsurf^{2n-1})$.
\end{lemma}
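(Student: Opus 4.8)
The plan is to construct the ellipsoids $E_j$ explicitly as sequences that are ``long'' in the direction $\bar e$ and ``short'' in all orthogonal directions, so that the normalized measure $\rho_{E_j}^{2n+p}\,du$ on $\unitsurf^{2n-1}$ concentrates at $\pm\bar e$. Concretely, after applying a unitary transformation we may assume $\bar e = e_1$, and we take $E_j$ to be the origin-symmetric ellipsoid with semi-axes $a_j$ in the real and imaginary directions spanning $\linspan^{\CC}\{e_1\}$ and semi-axis $b_j$ in the remaining $2n-2$ real directions, with $a_j \to \infty$ and $b_j \to 0$ chosen so that the total mass $\int_{\unitsurf^{2n-1}} \rho_{E_j}^{2n+p}\,du$ stays bounded away from $0$ and $\infty$; a convenient normalization is to scale so that this integral equals $1$ for every $j$. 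The radial function of such an ellipsoid is $\rho_{E_j}(u) = \big( |u'|^2/a_j^2 + |u''|^2/b_j^2\big)^{-1/2}$, where $u = (u', u'')$ with $u' \in \linspan^{\CC}\{e_1\}$ and $u''$ its orthogonal complement.

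The key steps, in order: first, rewrite the integral in \eqref{eq:seqEllDeltaNew} in coordinates adapted to the splitting $\unitsurf^{2n-1} \subseteq \linspan^{\CC}\{e_1\} \oplus (\linspan^{\CC}\{e_1\})^{\perp_{\CC}}$, using the standard join-type parametrization $u = (\cos\theta)\,\omega_1 + (\sin\theta)\,\omega_2$ with $\omega_1 \in \unitsurf^1$, $\omega_2 \in \unitsurf^{2n-3}$, $\theta \in [0,\pi/2]$, whose Jacobian is a constant multiple of $\cos\theta\,(\sin\theta)^{2n-3}$. Second, observe that on this chart $\rho_{E_j}(u)^{2n+p} = \big( \cos^2\theta / a_j^2 + \sin^2\theta / b_j^2\big)^{-(2n+p)/2}$, which for fixed small $\theta>0$ is dominated by the $b_j$-term and tends to $0$, while near $\theta = 0$ it is of order $a_j^{2n+p}$; thus after normalizing the total mass to $1$, the resulting probability measures on $[0,\pi/2]$ concentrate at $\theta = 0$. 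Third, a Laplace/dominated-convergence argument (substituting $t = \theta / (b_j/a_j)$ or similar, and using that $p \ge -1$ guarantees $2n+p > -1$, hence the relevant one-dimensional integral $\int_0^\infty (1+t^2)^{-(2n+p)/2} t^{2n-3}\,dt$ converges) shows that for any $f \in C(\unitsurf^{2n-1})$ the integral converges to the average of $f$ over $\{\theta = 0\} = \unitsurf^1 \cdot e_1$. Fourth, since we are looking at the limit of the $\unitsurf^1$-orbit average but the claimed limit is $\tfrac12(f(\bar e) + f(-\bar e))$, I would additionally take $a_j$ anisotropic \emph{within} the complex line $\linspan^{\CC}\{e_1\}$ — long along $\pm\bar e$ (the real axis of that line) and comparatively short along $\pm i\bar e$ — so that a second concentration step collapses the $\unitsurf^1$-average down to the two antipodal points $\pm\bar e$; this is exactly the mechanism in \cite{Grinberg1999}*{Lem.~6.3}, now carried out with the extra complex direction treated as just another ``short'' direction.

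The main obstacle will be bookkeeping the two nested scales (the complex-line anisotropy producing concentration at $\pm\bar e$ within $\unitsurf^1$, and the transverse collapse $b_j \to 0$ producing concentration onto $\linspan^{\CC}\{e_1\}$) simultaneously, while keeping the normalizing constant under control so that the limit is the stated \emph{probability} measure $\tfrac12(\delta_{\bar e} + \delta_{-\bar e})$ rather than some other multiple. The cleanest way to handle this is to choose the three scales $a_j^{(1)}$ (along $\pm\bar e$), $a_j^{(2)}$ (along $\pm i\bar e$), $b_j$ (transverse) in a fixed hierarchy, e.g. $a_j^{(1)} = j^2$, $a_j^{(2)} = j$, $b_j = 1/j$, compute the total mass asymptotically by iterated Laplace asymptotics, and divide by it; then a single application of dominated convergence (with majorant furnished by the convergence of the scalar integrals above, valid precisely because $2n+p>-1$) yields \eqref{eq:seqEllDeltaNew}. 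Continuity of $f$ is used only at the very end to replace $f$ by its value at the limiting points. Note the hypothesis $p \ge -1$ enters exactly here, ensuring integrability of the limiting one-dimensional kernels; for $p < -1$ these would diverge and the construction would fail, consistent with the role of $p\ge -1$ elsewhere in the paper.
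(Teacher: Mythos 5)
Your overall plan—degenerate an ellipsoid so that the normalized measure $\rho_{E_j}^{2n+p}\,du$ concentrates at $\pm\bar e$—is the right mechanism, but there are two substantive issues. First, the construction you end up with is more complicated than necessary: you start with the split $\CC^n = \linspan^{\CC}\{\bar e\} \oplus (\linspan^{\CC}\{\bar e\})^{\perp_\CC}$, notice this concentrates on the whole circle $\unitsurf^1\cdot\bar e$, and then repair it by introducing a third axis scale $a_j^{(1)} \gg a_j^{(2)} \gg b_j$. The paper avoids all of this by using a purely \emph{real} split from the outset: the ellipsoid $E_{a,b}$ has a single long semi-axis $a=j$ along the one-dimensional real line $\RR\bar e$, and the remaining $2n-1$ real directions (including $\RR\,i\bar e$) all get the same short semi-axis $b$. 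With only two scales there is never any concentration onto the circle to undo.

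Second, and more seriously, your account of where $p\geq -1$ enters is wrong. You claim it guarantees convergence of $\int_0^\infty (1+t^2)^{-(2n+p)/2}t^{2n-3}\,dt$, but that integral converges for all $p>-2$ (and indeed $2n+p>-1$ holds for every admissible $p$, since $n\geq 1$), so this cannot be the obstruction. The correct threshold integral is $\int_0^\infty r^{2n-2}(1+r^2)^{-(2n+p)/2}\,dr$, coming from the full transverse collapse onto $\{\pm\bar e\}$ (a codimension-$(2n-1)$ set, not codimension-$(2n-2)$); it converges precisely when $p>-1$ and diverges logarithmically at $p=-1$. In the paper this shows up differently and more robustly: they normalize \emph{first} (choose $b_j$ so that $\int\rho_{E_j}^{2n+p}\,du=1$), then prove $b_j\to 0$ by observing that if $b_j\geq M>0$, monotone convergence would force $1\geq \text{const}\cdot M^{2n+p}\int_0^{\pi/2}\cos(t)^{-2-p}\,dt$, which is infinite precisely because $p\geq -1$. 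After that, they extract a weak-\(*\) limit via Banach--Alaoglu and show its support lies in $\{\pm\bar e\}$ by a uniform pointwise bound $\rho_{E_j}^{2n+p}\lesssim b_j^{2n+p}\to 0$ away from $\pm\bar e$; this needs no Laplace asymptotics and handles the borderline $p=-1$ cleanly, whereas your ``iterated Laplace asymptotics, then divide by the total mass'' route requires delicate bookkeeping of logarithmic factors exactly at $p=-1$ (which your sketch does not carry out). I would rework the argument to follow the paper's order of operations (normalize, then force $b_j\to 0$, then pass to a weak-\(*\) limit), using a two-scale ellipsoid.
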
	
\begin{proof}
	First, without loss of generality, we may assume that $\bar e$ is the first standard unit vector in $\CC^n$. Using generalized spherical coordinates $u = (u_1 \sin(t), u_2 \cos(t))$ for $u \in \unitsurf^{2n-1}$, with $u_1 \in \unitsurf^0 = \{\pm \bar e\}, u_2 \in \unitsurf^{2n-2}$ and $t \in [0,\pi/2]$, the radial function of the ellipsoid
	\begin{align*}
		E_{a,b} = \left\{(z_1, \dots, z_{n}) \in \CC^n: \frac{(\Re z_1)^2}{a^2} + \frac{(\Im z_1)^2 + |z_2|^2 + \dots + |z_n|^2}{b^2} \leq 1\right\},
	\end{align*}
	for $a,b>0$ is given by
	\begin{align*}
		\rho_{E_{a,b}}(u_1 \sin(t), u_2 \cos(t)) = \left(\frac{\sin(t)^2}{a^2} + \frac{\cos(t)^2}{b^2} \right)^{-1/2}, \quad t \in [0,\pi/2].
	\end{align*}
	Next, choose $b_j>0$ by the intermediate value theorem, such that
	\begin{align}\label{eq:prfSeqEllDeltaMass1}
		\int_{\unitsurf^{2n-1}}\rho_{E_{j, b_j}}(u)^{2n+p} du = 1,
	\end{align}
	whenever $j \in \NN$ is large enough. Note that $b_j \to 0$ as $j \to \infty$. Indeed, assume that $b_j \geq M$ for some constant $M>0$. Writing \eqref{eq:prfSeqEllDeltaMass1} in generalized spherical coordinates (where $du = \cos(t)^{2n-2}du_1 du_2 dt$), denoting $C_n = 2 (2n-2)\kappa_{2n-2}$,
	\begin{align*}
		1 = \int_{\unitsurf^{2n-1}}\!\!\!\!\!\rho_{E_{j,b_j}}(u)^{2n+p} du &= C_n \int_0^{\pi/2}\!\!\!\! \cos(t)^{2n-2} \left(\frac{\sin(t)^2}{j^2} + \frac{\cos(t)^2}{b_j^2} \right)^{-(2n+p)/2}\!\!\!\!\! dt\\
		&\geq C_n\int_0^{\pi/2}\!\!\!\! \cos(t)^{2n-2} \left(\frac{\sin(t)^2}{j^2} + \frac{\cos(t)^2}{M^2} \right)^{-(2n+p)/2}\!\!\!\!\! dt,
	\end{align*}
	and letting $j \to \infty$ yields (by monotone convergence)
	\begin{align*}
		1 \geq C_n M^{2n+p} \int_0^{\pi/2}\cos(t)^{-2-p} dt,
	\end{align*}
	which contradicts the fact that $\cos(t)^{-2-p}$ is integrable only if $-2-p>-1$, that is $p < -1$. Since $b_j$ is clearly monotonously decreasing, $b_j \to 0$.
	
	Setting $E_j = E_{j,b_j}$, we claim that a subsequence of $(E_j)_{j\in \NN}$ already satisfies \eqref{eq:seqEllDeltaNew}. Indeed, observe that by \eqref{eq:prfSeqEllDeltaMass1} and since they are positive, the functions $\rho_{E_{j}}^{2n+p}$ all have norm $1$, when seen as elements of the dual space of $C(\unitsurf^{2n-1})$. Consequently, by the Banach--Alaoglu theorem, there exists a subsequence (again denoted by $(\rho_{E_{j}}^{2n+p})_j$) converging in the weak-* topology to a Borel measure $\mu$ on $\unitsurf^{2n-1}$, that is,
	\begin{align*}
		\int_{\unitsurf^{2n-1}} f(u) \rho_{E_{j}}(u)^{2n+p} du \to \int_{\unitsurf^{2n-1}} f(u) d\mu(u), \quad j \to \infty,
	\end{align*}
	for every $f \in C(\unitsurf^{2n-1})$. Showing $\mu = \frac{1}{2}(\delta_{\bar e} + \delta_{-\bar e})$ thus directly implies the claim.
	
	To this end, suppose that $u \in \unitsurf^{2n-1} \setminus\{\pm \bar e\}$ and let $U \subseteq \unitsurf^{2n-1}$ be an open neighborhood of $u$ not containing $\pm \bar e$ in its closure. Then there exists $\varepsilon > 0$ such that for all $u=(u_1\sin(t), u_2\cos(t)) \in U$, we have $\cos(t) > \varepsilon$, and, therefore
	\begin{align}\label{eq:prfSeqEllDeltaNewCtEstimOutside}
		\left(\frac{\sin(t)^2}{j^2} + \frac{\cos(t)^2}{b_j^2} \right)^{-(2n+p)/2} \leq \left(\frac{\sin(t)^2}{j^2} + \frac{\varepsilon^2}{b_j^2} \right)^{-(2n+p)/2} \leq \frac{b_j^{2n+p}}{\varepsilon^{2n+p}}.
	\end{align}
	As $b_j \to 0$ for $j \to \infty$, a direct estimate for $f \in C(\unitsurf^{2n-1})$ concentrated on $U$, shows
	\begin{align*}
		\int_{\unitsurf^{2n-1}} f(c) d\mu(c) = \lim_{j \to \infty} \int_{U} f(c) \rho_{E_{j}}(c)^{2n+p} dc = 0,
	\end{align*}
	that is, $\supp \mu \subseteq \unitsurf^{2n-1} \setminus U$ and, hence, $\supp \mu \subseteq \{\pm \bar e\}$, as $u$ was arbitrary. Since $\mu(\unitsurf^{2n-1})=1$, by \eqref{eq:prfSeqEllDeltaMass1}, and $\mu$ must be even (as weak-* limit of even measures), we conclude that $\mu = \frac{1}{2}(\delta_{\bar e} + \delta_{-\bar e})$, which completes the proof. 
\end{proof}

\noindent
The previous lemma for continuous functions on $\unitsurf^{2n-1}$ can be directly extended to functions with a specific type of pole.

\begin{lemma}\label{lem:seqEllDeltaRhoC}
	Suppose that $p \geq -1$ and let $\bar e \in \unitsurf^{2n-1}$. Then there exists a sequence of origin-symmetric ellipsoids $E_j \subseteq \CC^n, j \in \NN$, such that
	\begin{align}\label{eq:seqEllDeltaNewRadFct}
		\lim_{j \to \infty}\int_{\unitsurf^{2n-1}} h_C(v \cdot (c \bar e))^p \rho_{E_j}(u)^{2n+p} dv = \frac{1}{2}\left(h_C(\overline{c})^p + h_C(-\overline{c})^p\right), \quad c \in \unitsurf^1,
	\end{align}
	for all $C \in \convexbodiesO(\CC)$.
\end{lemma}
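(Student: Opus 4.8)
The plan is to deduce this from Lemma~\ref{lem:seqEllDeltaNewContinuous} by using the \emph{same} sequence of ellipsoids $E_j = E_{j,b_j}$ produced there and cutting off the singularity of the integrand. Fix $C \in \convexbodiesO(\CC)$ and $c \in \unitsurf^1$, and set $g_c(v) = h_C(v \cdot (c\bar e))^p$, $v \in \unitsurf^{2n-1}$. Since $0 \in \interior C$, the support function $h_C$ is strictly positive off the origin, so $g_c$ is continuous and positive on $\unitsurf^{2n-1}$ outside the set $H = \{v \in \unitsurf^{2n-1}: v\cdot\bar e = 0\}$, along which $g_c$ has a pole when $p<0$; moreover $\int_{\unitsurf^{2n-1}} g_c\,dv = (\JOp_{C,p}1)(c\bar e) < \infty$ by Lemma~\ref{lem:JOpWellDef}, as $\dim C = 2 > -p$. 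The decisive observation is that $H$ is disjoint from $\{\pm\bar e\}$, since $\bar e\cdot\bar e = 1 \neq 0$; in the generalized spherical coordinates $v = (v_1\sin t, v_2\cos t)$ of the previous proof, $H$ lies in the slice $\{t=0\}$ while $\pm\bar e$ corresponds to $t=\pi/2$.

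First I would choose, by Urysohn's lemma, a cutoff $\chi \in C(\unitsurf^{2n-1})$ with $0\le\chi\le 1$ that equals $1$ on an open neighbourhood of $\{\pm\bar e\}$ and vanishes on an open neighbourhood of $H$. Then $\chi g_c$, extended by $0$ on $H$, lies in $C(\unitsurf^{2n-1})$, so Lemma~\ref{lem:seqEllDeltaNewContinuous} applied to $f = \chi g_c$ gives
\[
\int_{\unitsurf^{2n-1}} \chi(v)\, g_c(v)\, \rho_{E_j}(v)^{2n+p}\, dv \;\longrightarrow\; \tfrac12\bigl(g_c(\bar e) + g_c(-\bar e)\bigr), \qquad j\to\infty,
\]
because $\chi\equiv 1$ near $\pm\bar e$. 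For the remaining term $\int_{\unitsurf^{2n-1}}(1-\chi)\,g_c\,\rho_{E_j}^{2n+p}\,dv$, note that $\supp(1-\chi)$ is a compact subset of $\unitsurf^{2n-1}$ disjoint from $\{\pm\bar e\}$, hence contained in $\{\cos t\ge\varepsilon\}$ for some $\varepsilon>0$, where the estimate~\eqref{eq:prfSeqEllDeltaNewCtEstimOutside} gives $\rho_{E_j}(v)^{2n+p}\le b_j^{2n+p}/\varepsilon^{2n+p}$; thus this term is bounded by $\bigl(b_j^{2n+p}/\varepsilon^{2n+p}\bigr)\int_{\unitsurf^{2n-1}} g_c\,dv$, which tends to $0$ since $2n+p\ge 2n-1>0$ and $b_j\to 0$. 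Adding the two contributions and using $\bar e\cdot(c\bar e)=\overline{c}$, $(-\bar e)\cdot(c\bar e)=-\overline{c}$, so that $g_c(\pm\bar e)=h_C(\pm\overline{c})^p$, gives precisely~\eqref{eq:seqEllDeltaNewRadFct}.

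The main (and essentially only) point that needs care is the separation of the pole locus $H$ of $g_c$ from the atoms $\pm\bar e$ of the weak-$\ast$ limit measure $\tfrac12(\delta_{\bar e}+\delta_{-\bar e})$: this is what legitimises both replacing $g_c$ by the continuous function $\chi g_c$ and invoking the uniform decay of $\rho_{E_j}^{2n+p}$ away from $\pm\bar e$ established in the proof of Lemma~\ref{lem:seqEllDeltaNewContinuous}. (The $\rho_{E_j}(u)$ appearing in the integrand of the statement should read $\rho_{E_j}(v)$.)
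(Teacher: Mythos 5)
Your proof is correct and follows essentially the same strategy as the paper's: isolate the pole of $g_c$ (which sits on the equator $\{v: v\cdot\bar e=0\}$, disjoint from $\pm\bar e$), apply Lemma~\ref{lem:seqEllDeltaNewContinuous} to the resulting continuous truncation, and use the decay estimate~\eqref{eq:prfSeqEllDeltaNewCtEstimOutside} on a compact set avoiding $\pm\bar e$ to show the remainder tends to zero. The only (cosmetic) difference is that you cut off in the domain via a Urysohn function, whereas the paper truncates in the range at the level $M = 2\max\{h_C(\overline{c})^p, h_C(-\overline{c})^p\}$.
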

\begin{proof}
	If $p>0$, the function $g(v) = h_C(v\cdot (c\bar e))^p$ is continuous on $\unitsurf^{2n-1}$ and, the claim follows from Lemma~\ref{lem:seqEllDeltaNewContinuous}. For $p<0$, set $M = 2\max\{h_C(\overline{c})^p, h_C(-\overline{c})^p\}$, and consider the decomposition
	\begin{align*}
		g(v) = \min\{g(v), M\} + ( \max\{g(v), M\} - M),
	\end{align*}
	where the first function clearly is continuous and coincides with $g$ on a neighborhood $V$ of $\pm \bar e$, whereas the second function vanishes on the same neighborhood. Taking the ellipsoids $E_j$ as in the previous lemma, the same estimate as in \eqref{eq:prfSeqEllDeltaNewCtEstimOutside} implies that
	\begin{align*}
		\int_{\unitsurf^{2n-1}} (\max\{g(v), M\} - M) \rho_{E_j}(u)^{2n+p} dv \leq \frac{b_j^{2n+p}}{\varepsilon^{2n+p}} \int_{\unitsurf^{2n-1} \setminus V}\!\! (\max\{g(v), M\} - M) dv,
	\end{align*}
	where the integral on the right-hand side is finite, since its absolute value is bounded by $\|\JOp_{C, p} 1\|_{\infty} + M(2n-1)\kappa_{2n-1}$. Consequently, as $b_j \to 0$, the left-hand side converges to zero as $j \to \infty$. Hence, together with Lemma~\ref{lem:seqEllDeltaNewContinuous} for the first term $\min\{g(v),M\}$ and since $\min\{g(v),M\} = g(v)$ for $v = \pm \bar e$, the claim follows.
\end{proof}

\noindent
We are now ready to state the aforementioned counterexample.

\begin{proposition}
	Let $-1\leq p<1$ be non-zero. Then there exists $C \in \convexbodiesO(\CC)$ and an origin-symmetric ellipsoid $K \subseteq \CC^n$ such that $\IntBody_{C,p} K$ is not convex.
\end{proposition}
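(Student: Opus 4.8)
The plan is to obtain a non-convex $\IntBody_{C,p}K$ as a limit of complex $L_p$-intersection bodies of the degenerating ellipsoids from Lemma~\ref{lem:seqEllDeltaRhoC}. Fix $\bar e\in\unitsurf^{2n-1}$, put $\ell=\linspan^\CC\{\bar e\}$, and let $E_j\subseteq\CC^n$, $j\in\NN$, be the origin-symmetric ellipsoids supplied by Lemma~\ref{lem:seqEllDeltaRhoC} for this $\bar e$. Since convexity of $\IntBody_{C,p}E_j$ would force convexity of the planar slice $\IntBody_{C,p}E_j\cap\ell$, it suffices to show that $\IntBody_{C,p}E_j\cap\ell$ is non-convex for some large $j$. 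This slice is the star body in $\ell\cong\CC$ with radial function $c\mapsto\rho_{\IntBody_{C,p}E_j}(c\bar e)$, $c\in\unitsurf^1$, and, by \eqref{eq:IntBodyByJOp}, $\bar e\cdot(c\bar e)=\overline c$ and Lemma~\ref{lem:seqEllDeltaRhoC},
\begin{align*}
	\rho_{\IntBody_{C,p}E_j}(c\bar e)^{-p}=\frac{1}{2n+p}\int_{\unitsurf^{2n-1}}h_C(v\cdot(c\bar e))^p\rho_{E_j}(v)^{2n+p}\,dv\ \longrightarrow\ \frac{1}{2(2n+p)}\left(h_C(\overline c)^p+h_C(-\overline c)^p\right)=:\rho_{\widetilde L}(c)^{-p}
\end{align*}
as $j\to\infty$. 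As $C\in\convexbodiesO(\CC)$ has $\dim C=2$, $h_C$ is positive off the origin, so $\rho_{\widetilde L}$ is a genuine radial function; moreover, re-running the proof of Lemma~\ref{lem:seqEllDeltaRhoC} with the neighbourhood of $\{\pm\bar e\}$ and the majorant chosen uniformly in $c$ (the large values of $v\mapsto h_C(v\cdot(c\bar e))^p$ occurring away from $\pm\bar e$, uniformly in $c$) upgrades this to uniform convergence on $\unitsurf^1$.

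Next I would choose $C$. Writing $h_C=\rho_{C^\ast}^{-1}$, the limit reads $\rho_{\widetilde L}(c)^{-p}=\tfrac{1}{2(2n+p)}\big(\rho_{C^\ast}(\overline c)^{-p}+\rho_{-C^\ast}(\overline c)^{-p}\big)$, i.e. $\widetilde L$ is, up to a dilation, the $L_{-p}$-radial sum of $C^\ast$ with its reflection $-C^\ast$. Take $C=M^\ast$, where $M=\operatorname{conv}\{(1,0),(-1,\varepsilon),(-1,-\varepsilon)\}\in\convexbodiesO(\CC)$ is a thin triangle symmetric about the real axis; then $C^\ast=M$, and real-axis symmetry of $M$ passes to $C$ (so $h_C(\overline c)=h_C(c)$) and hence to $\widetilde L$, which is therefore a dilate of the star body $S$ with radial function $c\mapsto\big(\rho_M(c)^{-p}+\rho_{-M}(c)^{-p}\big)^{-1/p}$. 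The body $S$ is symmetric about both coordinate axes, and for any convex planar body $B$ with that symmetry one has $\rho_B(\theta)\sin\theta\le\rho_B(\pi/2)$ for all $\theta$ (the highest boundary point lies on the vertical axis).

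Setting $q=-p$, a short computation of $\rho_M$ near the directions of angle $\varepsilon$ and $\pi/2$ shows that the boundary points of $S$ in these directions have second coordinates $\sim(3^{-q}+1)^{1/q}\varepsilon$ and $\sim 2^{1/q-1}\varepsilon$ respectively, as $\varepsilon\to0$. By the inequality noted above, $S$ (hence $\widetilde L$) is therefore non-convex once $(3^{-q}+1)^{1/q}>2^{1/q-1}$ for small $\varepsilon$, which reduces to $(2/3)^q+2^q>2$ if $q>0$ and to $3^{|q|}+1<2^{1+|q|}$ if $q<0$; the first holds for all $q>0$ since $\phi(q)=(2/3)^q+2^q$ is convex with $\phi(0)=2$ and $\phi'(0)=\ln(4/3)>0$, the second for all $q\in(-1,0)$ since $g(t)=2^{1+t}-3^t-1$ is concave on $[0,1]$ with $g(0)=g(1)=0$. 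Concretely, the midpoint of the boundary points of $\widetilde L$ in the (non-antipodal) directions of angles $\varepsilon$ and $\pi-\varepsilon$ then lies strictly outside $\widetilde L$.

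It remains to transfer this dimple to $\IntBody_{C,p}E_j$ for large $j$. Fixing $c_0,c_1\in\unitsurf^1$ the unit vectors of angle $\varepsilon$ and $\pi-\varepsilon$, and $z_\ast=\tfrac12\rho_{\widetilde L}(c_0)c_0+\tfrac12\rho_{\widetilde L}(c_1)c_1$, we have $\|z_\ast\|>\rho_{\widetilde L}(z_\ast/\|z_\ast\|)$; by the uniform convergence of $c\mapsto\rho_{\IntBody_{C,p}E_j}(c\bar e)$ to $\rho_{\widetilde L}$ together with the continuity of $\rho_{\widetilde L}$, the points $z_j:=\tfrac12\rho_{\IntBody_{C,p}E_j}(c_0\bar e)c_0+\tfrac12\rho_{\IntBody_{C,p}E_j}(c_1\bar e)c_1$ converge to $z_\ast$ and $\rho_{\IntBody_{C,p}E_j}((z_j/\|z_j\|)\bar e)\to\rho_{\widetilde L}(z_\ast/\|z_\ast\|)$. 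Hence for $j$ large $z_j$ is a convex combination of two boundary points of $\IntBody_{C,p}E_j\cap\ell$ that does not belong to $\IntBody_{C,p}E_j\cap\ell$; so this slice — and therefore $\IntBody_{C,p}E_j$ itself — is not convex, and $K=E_j$ does the job. The main obstacle is the choice of $C$: one needs a single convex body whose polar, radially $L_{-p}$-added to its reflection, is provably non-convex for \emph{every} admissible $p$ — the thin triangle achieves this, but one must verify that the leading-order comparison survives once the $O(\varepsilon)$ corrections to the two second coordinates are reinstated.
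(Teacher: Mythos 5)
Your proposal is correct and follows essentially the same route as the paper: both apply Lemma~\ref{lem:seqEllDeltaRhoC} to a sequence of degenerating ellipsoids and identify the limit as (a dilate of) the $L_{-p}$-radial sum of a triangle-polar and its reflection, which is then shown to be non-convex. The only substantive difference is in how the two remaining technicalities are handled: the paper cites Gardner's book for the non-convexity of $c \mapsto (h_C(c)^p + h_{-C}(c)^p)^{-1/p}$ and passes briskly from pointwise convergence of radial functions to eventual non-convexity of $\IntBody_{C,p}E_j$, whereas you supply an explicit thin-triangle computation (correctly reducing to $(2/3)^q+2^q>2$ for $q>0$ and $3^{|q|}+1<2^{1+|q|}$ for $q\in(-1,0)$) and make explicit the upgrade to uniform convergence on $\unitsurf^1$ that justifies transferring the non-convexity to a fixed $E_j$. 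So this is the paper's argument, carried out in more self-contained detail.
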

\begin{proof}
	By \eqref{eq:IntBodyByJOp} and Lemma~\ref{lem:seqEllDeltaRhoC}, there exists $\bar{e} \in \unitsurf^{2n-1}$ and a sequence $(E_j)_{j \in \NN}$ of origin-symmetric ellipsoids such that
	\begin{align*}
		\rho_{\IntBody_{C,p} E_j}(c \bar e)^{-p} \to \frac{1}{2(2n+p)}(h_C(\overline{c})^p + h_C(-\overline{c})^p), \quad j \to \infty,
	\end{align*}
	for every $c \in \unitsurf^1$, $C \in \convexbodiesO(\CC)$ and $p \geq -1$.
	Note that, when choosing $C$ to be, e.g., a suitable triangle, the function $c \mapsto (h_C(c)^p + h_{-C}(c)^p)^{-1/p}$ is not the radial function of a convex body (see \cite{Gardner2006}*{Sec.~6.1} for details) when $p<1$. Consequently, the radial function of $\IntBody_{C,p} E_j$ converges pointwise to the radial function of a non-convex star body as $j \to \infty$ and, hence, $\IntBody_{C,p} E_j$ cannot be convex when $j$ is sufficiently large.
\end{proof}


\section{Proof of Theorems~\ref{prop:compLp=realLp} and \ref{mthm:ineqComplexLpVsLp}}\label{sec:ProofIntIneq}
In this section we establish a representation of the radial function of $\IntBody_{C,p} K$ for $K \in \starbodiesO(\CC^n)$ and origin-symmetric $C \in \convexbodiesO(\CC)$, and use it to prove Theorem~\ref{mthm:ineqComplexLpVsLp}.

\medskip

To this end, let $L \in \convexbodiesO(\RR^d)$ be origin-symmetric, $\|\cdot\|_L = \rho_L^{-1}$ its gauge function, and recall that the space $(\RR^d, \|\cdot\|_L)$ \emph{embeds (isometrically) in $L_q$}, $q>0$, that is, in $L_q([0,1])$, if and only if there exists a finite Borel measure $\mu$ on $\unitsurf^{d-1}$, such that
\begin{align}\label{eq:embedIsomLpPos}
 \|x\|_L^q = \int_{\unitsurf^{d-1}} |\langle x, u\rangle|^q d\mu(u), \quad x \in \RR^d,
\end{align}
see, e.g., \cite{Koldobsky2005}*{Ch.~6} for details. Interpreting \eqref{eq:embedIsomLpPos} as an equality of distributions and using the relation between Radon and Fourier transform $\hat{\cdot}$, this definition can be formally extended to negative values of $p$ as follows (\cite{Koldobsky2005}*{Def.~6.14}).
\begin{definition}\label{def:embedIsomLpNeg}
 Suppose that $L \in \starbodiesO(\RR^d)$ is origin-symmetric. Then the space $(\RR^d, \|\cdot\|_L)$ is said to \emph{embed} in $L_{-q}, 0<q<n$, if there exists a finite Borel measure~$\mu$ on $\unitsurf^{d-1}$ such that
 \begin{align}\label{eq:embedIsomLpNeg}
  \int_{\RR^d} \|x\|_L^{-q} \phi(x) dx = \int_{\unitsurf^{d-1}} \left( \int_0^\infty t^{q-1} \hat \phi(tu) dt \right) d\mu(u),
 \end{align}
 for every even Schwartz function $\phi$ on $\RR^d$.
\end{definition}
Note that this definition is closely related to the notion of $k$-intersection bodies. More precisely, an origin-symmetric star body $L \in \starbodiesO(\RR^d)$ is a $k$-intersection body if and only if the space $(\RR^d, \|\cdot\|_L)$ embeds in $L_{-k}$.

The following lemma is a direct consequence of \cite{Koldobsky2005}*{Cor.~6.7 \& 6.8}, for $0<q\leq 1$, and of \cite{Koldobsky2005}*{Thm.~6.17}, for $-2<q<0$.
\begin{lemma}\label{lem:2DimBodyEmbedLp}
 For every origin-symmetric $L \in \convexbodiesO(\RR^2)$, the space $(\RR^2,\|\cdot\|_L)$ embeds in $L_q$ for every non-zero $-2<q \leq 1$.
\end{lemma}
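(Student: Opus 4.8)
The plan is to reduce the lemma to two classical classification results for isometric embeddings of planar normed spaces into $L_q$-spaces, treating the ranges $0<p\le 1$ and $-2<p<0$ separately; since both ranges are covered by results in Koldobsky's monograph, the proof amounts to identifying the right statements and matching conventions.

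For $0<p\le 1$ the starting point is that every origin-symmetric $K \in \convexbodiesO(\RR^2)$ has a polar body $K^\circ$ which is a zonoid — indeed, in the plane every origin-symmetric convex body is a zonoid. Hence the gauge function of $K$ satisfies $\|x\|_K = h_{K^\circ}(x) = \int_{\unitsurf^1} |\langle x, u\rangle|\, d\mu(u)$ for a finite even Borel measure $\mu$ on $\unitsurf^1$, i.e. $(\RR^2, \|\cdot\|_K)$ embeds isometrically in $L_1$. Composing with the isometric embedding of $L_1$ into $L_p$ for $0<p<1$ (realized, e.g., by one-stable random variables, whose $p$-th absolute moment is finite precisely for $p<1$) and pushing the resulting representing measure forward to $\unitsurf^1$ along the radial projection yields \eqref{eq:embedIsomLpPos} for every $0<p\le 1$. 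This is exactly the content of \cite{Koldobsky2005}*{Cor.~6.7 \& 6.8}.

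For $-2<p<0$ one argues via the Fourier-analytic characterization underlying Definition~\ref{def:embedIsomLpNeg}: $(\RR^2, \|\cdot\|_K)$ embeds in $L_{-p}$, $0<-p<2$, precisely when the distribution $\|\cdot\|_K^{p}$, suitably interpreted, has a Fourier transform which is a nonnegative measure, equivalently, when $K$ is a $(-p)$-intersection body. For planar origin-symmetric convex bodies this positivity always holds — ultimately a consequence of Brunn's concavity theorem in dimension two, which forces the relevant fractional derivative of the parallel section function to be a positive distribution — and this is precisely \cite{Koldobsky2005}*{Thm.~6.17}. Combining the two ranges and excluding $p=0$ gives the lemma.

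There is no serious obstacle here; the only point requiring care is bookkeeping, namely checking that the representing measure $\mu$ furnished by the cited results is finite (not merely locally finite) and that the normalization in \eqref{eq:embedIsomLpNeg} — in particular the factor $\int_0^\infty t^{p-1}\hat\phi(tu)\,dt$ — matches Koldobsky's convention. Both are addressed in the quoted statements, so no additional computation is needed.
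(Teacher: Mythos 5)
Your proposal follows the same route as the paper: the paper's proof is just the one-line observation that the claim is a direct consequence of \cite{Koldobsky2005}*{Cor.~6.7 \& 6.8} for $0<p\le 1$ and of \cite{Koldobsky2005}*{Thm.~6.17} for $-2<p<0$, which is precisely the pair of citations you invoke. Your added glosses (planar symmetric bodies are zonoids so $L_1$-embedding holds, then compose with $L_1\hookrightarrow L_p$ via stable laws; for negative $p$ reduce to positive-definiteness of $\|\cdot\|_K^p$, i.e.\ to $K$ being a $(-p)$-intersection body) are correct and simply unpack what those references contain, so there is no substantive difference.
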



\noindent
We further require a lemma relating the Fourier transforms of the complex and the real Radon transform.

\begin{lemma}\label{lem:ComplRealRadonFourier}
 Suppose that $f \in C(\CC^n)$ has compact support and, for $u \in \CC^n\setminus\{0\}$, recall that for $z \in \CC$ and $t \in \RR$,
 \begin{align*}
  (\mathcal{R}_u^\CC f)(z) = \int_{x \cdot u = z} f(x) dx \quad \text{ and } \quad (\mathcal{R}_u^\RR f)(t) = \int_{\langle x,u\rangle = t} f(x) dx,
 \end{align*}
 denote the \emph{complex and real Radon transforms} of $f$.
 Then
 \begin{align*}
  \widehat{\mathcal{R}_u^\CC f}(rc) = \widehat{\mathcal{R}_{cu}^\RR f}(r), \quad r \in \RR, c \in \CC,
 \end{align*}
 for $u \in \CC^n\setminus\{0\}$, where the left Fourier transform is on $\CC$ and the right one on $\RR$.

\end{lemma}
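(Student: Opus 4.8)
The plan is to recognize both quantities as the same object, namely the Fourier transform of $f$ on $\CC^n\cong\RR^{2n}$ evaluated at the point $r\,cu$; this is the complex Fourier slice theorem for $\mathcal{R}_u^\CC$ combined with the ordinary Fourier slice theorem for $\mathcal{R}_{cu}^\RR$, glued together by the identity \eqref{eq:RelSkalProdCReal}.

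Concretely, I would first note that since $f\in C(\CC^n)$ has compact support, both $\mathcal{R}_u^\CC f$ (on $\CC$) and $\mathcal{R}_w^\RR f$ (on $\RR$, for any $w\neq 0$) are continuous with compact support, and Fubini's theorem gives, for every bounded continuous kernel,
\[
 \int_\CC \varphi(z)\,(\mathcal{R}_u^\CC f)(z)\,dz = \int_{\CC^n}\varphi(x\cdot u)\,f(x)\,dx, \qquad \int_\RR \psi(t)\,(\mathcal{R}_w^\RR f)(t)\,dt = \int_{\CC^n}\psi(\langle x,w\rangle)\,f(x)\,dx,
\]
the first being \eqref{eq:propComplParSec} applied to $f$ in place of an indicator (and extended to bounded continuous $\varphi$, which is legitimate because $\mathcal{R}_u^\CC f$ has compact support). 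Taking $\varphi(z)=e^{-i\langle z,\,\zeta\rangle}$ and $\psi(t)=e^{-irt}$ turns these into the two slice theorems
\[
 \widehat{\mathcal{R}_u^\CC f}(\zeta) = \int_{\CC^n} e^{-i\langle x\cdot u,\,\zeta\rangle}\,f(x)\,dx, \qquad \widehat{\mathcal{R}_w^\RR f}(r) = \int_{\CC^n} e^{-ir\langle x,\,w\rangle}\,f(x)\,dx = \widehat f(rw),
\]
where $\widehat f$ denotes the Fourier transform of $f$ on $\RR^{2n}$.

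It then remains to specialize the first identity to $\zeta = rc$ and rewrite the exponent: by the symmetry of the real inner product together with \eqref{eq:RelSkalProdCReal}, $\langle x\cdot u,\,rc\rangle = r\langle c,\,x\cdot u\rangle = r\langle cu,\,x\rangle$, whence
\[
 \widehat{\mathcal{R}_u^\CC f}(rc) = \int_{\CC^n} e^{-ir\langle cu,\,x\rangle}\,f(x)\,dx = \widehat f(r\,cu) = \widehat{\mathcal{R}_{cu}^\RR f}(r),
\]
the last step being the real slice theorem with $w=cu$. This is precisely the claimed identity.

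Since the proof is in essence this short chain of identifications, I do not expect any genuine difficulty. The only minor points requiring attention are (i) justifying the use of Fubini's theorem against the non-integrable plane waves $e^{-i\langle z,\zeta\rangle}$ and $e^{-irt}$, which is fine because the relevant Radon transforms are compactly supported (alternatively, one first tests against Schwartz functions and passes to the limit), and (ii) fixing compatible normalizations for the Fourier transforms on $\RR$ and on $\CC\cong\RR^2$ — and, if one wants the two displayed identities to hold verbatim for non-unit vectors, for the Radon transforms as well — which is harmless since both sides are ultimately the same transform on $\RR^{2n}$.
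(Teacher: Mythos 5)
Your proof is correct and is essentially the same as the paper's: both apply Fubini's theorem twice (once for the complex Radon transform, once for the real one) and invoke \eqref{eq:RelSkalProdCReal} to rewrite $\langle rc, x\cdot u\rangle = r\langle cu, x\rangle$ in the middle. You phrase the intermediate object explicitly as $\widehat f(rcu)$ on $\RR^{2n}$ via two Fourier slice theorems, but the computation is identical.
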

\begin{proof}
The claim follows by Fubini's theorem applied twice and \eqref{eq:RelSkalProdCReal},
 \begin{align*}
  \widehat{\mathcal{R}_u^\CC f}(rc) &= \int_{\CC}\int_{x \cdot u = z} f(x) e^{-i\langle rc,z\rangle} dx dz = \int_{\CC^n}f(x) e^{-i\langle rc,x \cdot u\rangle} dx \\
  &=\int_{\CC^n}f(x) e^{-ir\langle cu,x \rangle} dx = \int_{\RR}\int_{\langle x, cu\rangle = t} f(x) e^{-irt} dx dt = \widehat{\mathcal{R}_{cu}^\RR f}(r).
 \end{align*}
\end{proof}

\bigskip

\noindent We are now in a position to prove the main proposition required in the proof of Theorem~\ref{prop:compLp=realLp}. In the statement of the proposition, we denote by
\begin{align*}
	K^\circ = \{z \in \CC^n: \langle z, y\rangle \leq 1, \, \forall y \in K\}
\end{align*}
the polar body of $K \in \convexbodiesO(\CC^n)$.

\begin{proposition}\label{prop:repCIntBodyByLpIntBody}
Let $C \in \convexbodiesO(\CC)$ be origin-symmetric and $-1 \leq p < 1$ be non-zero. Then there exists a finite Borel measure $\mu_{C,p}$ on $\unitsurf^1$, such that
\begin{align}\label{eq:proprepCIntBodyByLpIntBody}
\rho_{\IntBody_{C,p}K}(u)^{-p}=\int_{\unitsurf^{1}}\rho_{\IntBody_p K}(cu)^{-p} d\mu_{C,p}(c), \quad u \in \unitsurf^{2n-1},
\end{align}
for every $K \in \starbodiesO(\CC^n)$, where, for $p=-1$, $d\mu_{C,-1}=\frac 12\rho_{iC^\circ}(c) dc$. In particular, if $K \in \starbodiesO(\CC^n)$ is $\unitsurf^1$-invariant, then $\IntBody_{C,p}K =\mu_{C,p}(\unitsurf^1)\IntBody_p K$.
\end{proposition}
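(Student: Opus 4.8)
The plan is to reduce \eqref{eq:proprepCIntBodyByLpIntBody} to a two-dimensional identity for $h_C$ and then invoke the $L_p$-embedding results of Lemma~\ref{lem:2DimBodyEmbedLp}. Using $h_{Cu}(x)=h_C(x\cdot u)$ from \eqref{eq:hCuEqualhCudot}, the relation $\langle x,cu\rangle=\langle c,x\cdot u\rangle$ from \eqref{eq:RelSkalProdCReal}, and \eqref{eq:propComplParSec}, one has
\begin{align*}
	\rho_{\IntBody_{C,p}K}(u)^{-p} &= \int_\CC h_C(z)^p\,\CompParSec{K}{u}(z)\,dz, \\
	\rho_{\IntBody_p K}(cu)^{-p} &= \int_\CC|\langle c,z\rangle|^p\,\CompParSec{K}{u}(z)\,dz,
\end{align*}
so, by Fubini (all integrands being non-negative), \eqref{eq:proprepCIntBodyByLpIntBody} for every $K\in\starbodiesO(\CC^n)$ follows once we exhibit a finite Borel measure $\mu_{C,p}$ on $\unitsurf^1$ with
\begin{align}\label{eq:planReduction}
	h_C(z)^p=\int_{\unitsurf^1}|\langle z,c\rangle|^p\,d\mu_{C,p}(c)\quad\text{for a.e. }z\in\CC\cong\RR^2;
\end{align}
indeed, \eqref{eq:planReduction} pulled back along the orthogonal projection $x\mapsto x\cdot u$ of $\RR^{2n}$ onto $\linspan^\CC\{u\}$ (preimages of null sets are null) and integrated against $\mathbbm{1}_K$ gives \eqref{eq:proprepCIntBodyByLpIntBody} after a further use of Fubini. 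Here $h_C=\|\cdot\|_{C^\circ}$ with $C^\circ\in\convexbodiesO(\RR^2)$ origin-symmetric.

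For $0<p<1$, identity \eqref{eq:planReduction} is precisely the statement that $(\RR^2,\|\cdot\|_{C^\circ})$ embeds isometrically in $L_p$, which holds by Lemma~\ref{lem:2DimBodyEmbedLp}; we take $\mu_{C,p}$ to be the associated (finite) embedding measure. For $-1<p<0$, Lemma~\ref{lem:2DimBodyEmbedLp} provides such an embedding in the sense of Definition~\ref{def:embedIsomLpNeg}, with a finite Borel measure $\nu$. I would convert this Fourier-side statement into \eqref{eq:planReduction} as follows: for even Schwartz $\phi$ on $\RR^2$ and $c\in\unitsurf^1$, the Fourier slice theorem (equivalently, Lemma~\ref{lem:ComplRealRadonFourier} in dimension one) gives $\hat\phi(tc)=\widehat{\mathcal{R}_c^\RR\phi}(t)$, and a routine one-dimensional computation (the Fourier transform of $|\cdot|^p$ being, for $-1<p<0$, a positive multiple of $|\cdot|^{-1-p}$) yields a constant $\gamma_p>0$ with
\begin{align*}
	\int_0^\infty t^{-p-1}\,\widehat{\mathcal{R}_c^\RR\phi}(t)\,dt=\gamma_p\int_\RR|s|^p(\mathcal{R}_c^\RR\phi)(s)\,ds=\gamma_p\int_{\RR^2}|\langle x,c\rangle|^p\phi(x)\,dx.
\end{align*}
Substituting into Definition~\ref{def:embedIsomLpNeg} and interchanging integrals gives $\int_{\RR^2}\|x\|_{C^\circ}^p\phi(x)\,dx=\int_{\RR^2}\big(\int_{\unitsurf^1}\gamma_p|\langle x,c\rangle|^p\,d\nu(c)\big)\phi(x)\,dx$ for all even Schwartz $\phi$; as both integrands are even and locally integrable on $\RR^2$ (since $-1<p<0$), this forces \eqref{eq:planReduction} a.e.\ with $\mu_{C,p}=\gamma_p\nu$. (Alternatively, one stays on $\CC^n$, applying Parseval and Lemma~\ref{lem:ComplRealRadonFourier} to write $\rho_{\IntBody_{C,p}K}(u)^{-p}$ as $\int_{\unitsurf^1}\rho_{\IntBody_p K}(cu)^{-p}\,d\mu_{C,p}(c)$ with $d\mu_{C,p}\propto\widehat{h_C^p}(c)\,dc$, the $L_p$-embedding being exactly what makes $\widehat{h_C^p}$ non-negative on $\unitsurf^1$.)

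The endpoint $p=-1$ needs a direct argument, since the angular integral in \eqref{eq:planReduction} then diverges. Here $h_C(z)^{-1}=\rho_{C^\circ}(z)$, so $\rho_{\IntBody_{C,-1}K}(u)=\int_\CC\rho_{C^\circ}(z)\,\CompParSec{K}{u}(z)\,dz$; passing to polar coordinates on $\CC$ and using that $\rho_{C^\circ}$ is $(-1)$-homogeneous and $C^\circ$ symmetric turns this into $\tfrac12\int_{\unitsurf^1}\rho_{C^\circ}(\omega)\big(\int_\RR\CompParSec{K}{u}(r\omega)\,dr\big)\,d\omega$. The inner integral is the integral of $\CompParSec{K}{u}$ over the line $\RR\omega=(i\omega)^\perp$, which, via the same projection $x\mapsto x\cdot u$, equals $V_{2n-1}(K\cap((i\omega)u)^\perp)$; substituting $c=i\omega$ and using $\rho_{C^\circ}(-ic)=\rho_{iC^\circ}(c)$ identifies the whole expression with $\int_{\unitsurf^1}\rho_{\IntBody_{-1}K}(cu)\,d\mu_{C,-1}(c)$ for $d\mu_{C,-1}=\tfrac12\rho_{iC^\circ}(c)\,dc$, where $\IntBody_{-1}K$ is read via \eqref{eq:convRealLpIntBody} (so $\rho_{\IntBody_{-1}K}(v)=V_{2n-1}(K\cap v^\perp)$). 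Finally, the ``in particular'' claim is immediate: $\IntBody_p$ commutes with the $\mathrm{U}(n)$-action, so $\unitsurf^1$-invariance of $K$ passes to $\IntBody_p K$ and $\rho_{\IntBody_p K}(cu)=\rho_{\IntBody_p K}(u)$ for all $c\in\unitsurf^1$; then \eqref{eq:proprepCIntBodyByLpIntBody} collapses to $\rho_{\IntBody_{C,p}K}^{-p}=\mu_{C,p}(\unitsurf^1)\,\rho_{\IntBody_p K}^{-p}$, so $\IntBody_{C,p}K$ is a dilate of $\IntBody_p K$.

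The main difficulty, I expect, lies in the range $-1<p<0$: converting the Fourier-analytic embedding statement of Definition~\ref{def:embedIsomLpNeg} into the honest pointwise identity \eqref{eq:planReduction} on $\RR^2$ and---should one proceed directly on $\CC^n$---justifying Parseval and the interchange of integrals in spite of the singularities of $h_C^p$ and of $|\langle\cdot,c\rangle|^p$ and the mere continuity of $\CompParSec{K}{u}$. The endpoint $p=-1$ is delicate for the same reason, which is why there the measure must be produced by the separate elementary computation above.
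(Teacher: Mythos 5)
Your argument is correct, and for the two trickier ranges it takes a genuinely different route than the paper's proof, even though both hinge on Lemma~\ref{lem:2DimBodyEmbedLp}. For $0<p<1$ the two arguments coincide. For $-1<p<0$ the paper never produces the pointwise identity \eqref{eq:planReduction}: instead it stays entirely in the distributional framework of Definition~\ref{def:embedIsomLpNeg}, specializes the test function to $\phi=\mathcal{R}_u^\CC f$ for smooth compactly supported $f$ on $\CC^n$, uses Lemma~\ref{lem:ComplRealRadonFourier} to convert the $\CC$-Fourier data into the real Radon transform $\mathcal{R}_{cu}^\RR f$, and only at the very end approximates $\mathbbm{1}_K$ by such $f$. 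You instead exploit the same Fourier-slice/Parseval computation purely in $\RR^2$ to upgrade the embedding statement to the a.e.\ identity $h_C^p=\int_{\unitsurf^1}|\langle\cdot,c\rangle|^p\,d\mu_{C,p}$, which is legitimate because both sides are even and locally integrable (the RHS by Tonelli, since $p>-2$), and then lift along $x\mapsto x\cdot u$ with a single Tonelli application. Both routes work; yours is conceptually cleaner in that the $n$-dimensional step reduces to the same elementary Fubini argument used for $0<p<1$, while the paper's avoids ever leaving the distributional setting. For $p=-1$ the paper first verifies the embedding identity \eqref{eq:prfPropRepCIntBodyNegPDarstC} with $d\nu_{C,-1}=\frac{1}{2\pi}\rho_{iC^\circ}\,dc$ directly, then reruns the $-1<p<0$ machinery and takes $p\to-1^+$ through the analytic family $r_+^p$; your polar-coordinate computation, recognizing $\int_\RR\CompParSec{K}{u}(r\omega)\,dr=V_{2n-1}(K\cap(i\omega u)^\perp)$ and substituting $c=i\omega$, is more elementary and dispenses with the limiting argument. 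Two small remarks: you invoke \eqref{eq:propComplParSec} with $\varphi=h_C^p$ and $\varphi=|\langle c,\cdot\rangle|^p$, which for $p<0$ are only locally integrable rather than continuous, so strictly one should justify these by the same Tonelli argument rather than by citing \eqref{eq:propComplParSec} verbatim; and the map $x\mapsto x\cdot u$ is a surjective linear map $\CC^n\to\CC$ rather than an orthogonal projection onto $\linspan^\CC\{u\}$, but this does not affect the null-set argument.
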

\begin{proof}
We distinguish the cases $0<p<1$, $-1<p<0$ and $p=-1$. If $p>0$, by Lemma~\ref{lem:2DimBodyEmbedLp} applied to $C^\circ \in \convexbodiesO(\CC)$, there exists a finite Borel measure $\mu_{C,p}$ on $\unitsurf^1$ such that 
\begin{align}\label{eq:prfPropRepCIntBodyPosPDarstC}
 h_C(z)^p = \rho_{C^\circ}(z)^{-p} = \|z\|_{C^\circ}^p = \int_{\unitsurf^1} |\langle z, c\rangle|^p d\mu_{C,p}(c), \quad z \in \CC.
\end{align}
Note that we identify $\CC \cong \RR^2$ here. Combining, for $u \in \unitsurf^{2n-1}$, \eqref{eq:prfPropRepCIntBodyPosPDarstC} with the definition~\eqref{eq:defCompLpIntersectBody} of $\IntBody_{C,p} K$, $K \in \starbodiesO(\CC^n)$, by \eqref{eq:RelSkalProdCReal}, and interchanging the order of integration,
\begin{align*}
 \rho_{\IntBody_{C,p} K}(u)^{-p} &= \int_K h_C(x \cdot u)^p dx = \int_K \int_{\unitsurf^1} |\langle x \cdot u, c \rangle|^p d\mu_{C,p}(c) dx \\
  &= \int_{\unitsurf^1} \int_K |\langle x, cu \rangle|^p  dx d\mu_{C,p}(c) =\int_{\unitsurf^1} \rho_{\IntBody_p K}(cu)^{-p} d\mu_{C,p}(c)
\end{align*}
 we arrive at the claim.
 
 In the second case, $-1< p<0$, Lemma~\ref{lem:2DimBodyEmbedLp}, applied again to $C^\circ \in \convexbodiesO(\CC)$, implies the existence of a measure $\nu_{C,p}$ on $\unitsurf^1$ such that
 \begin{align}\label{eq:prfPropRepCIntBodyNegPDarstC}
  \int_{\CC} \rho_{C^\circ}(z)^{-p} \phi(z) dz = \int_{\unitsurf^{1}} \left( \int_0^\infty t^{-p-1} \hat \phi(tc) dt \right) d\nu_{C,p}(c),
 \end{align}
 for every even Schwartz function $\phi$ on $\CC$. Note that $\nu_{C,p}$ can be chosen to be even. Since $\phi$ is even and (see, e.g., \cite{Koldobsky2005}*{Lem.~2.23})
 \begin{align}\label{eq:prfLpDarstFourier}
  \widehat{|t|^{-p-1}}(r) = 2 \Gamma(-p) \sin\left(\frac{\pi(p+1)}{2}\right) |r|^p, \quad r \in \RR,
 \end{align}
 we can rewrite the inner integral on the right-hand side to obtain
 \begin{align*}
  \int_0^\infty t^{-p-1} \hat \phi(tc) dt &= \frac{1}{2}\int_\RR |t|^{-p-1} \hat \phi(tc) dt \\
  &= \Gamma(-p) \sin\left(\frac{\pi(p+1)}{2}\right)\int_\RR |r|^{p} \tilde{\phi}_c (r) dr = c_p \int_0^\infty r^{p} \tilde{\phi}_c(r) dr,
 \end{align*}
 where we denote by $\tilde{\phi}_c$ the Fourier transform in $\RR$ of $t \mapsto \hat \phi(tc)$, and collect the constants into $c_p \in \RR$.
 
 If $f \in C^\infty(\CC^n)$ is even and has compact support, then the complex Radon transform $\mathcal{R}_u^\CC f$ is again even and smooth with compact support, and thus a Schwartz function. Taking now $\phi = \mathcal{R}_u^\CC f$, then $\tilde{\phi}_c = \mathcal{R}_{cu}^\RR f$, by Lemma~\ref{lem:ComplRealRadonFourier}, and, hence, 
 \begin{align*}
   \int_0^\infty t^{-p-1} \widehat{\mathcal{R}_u^\CC f}(tc) dt = c_p \int_0^\infty r^{p} \left(\mathcal{R}_{cu}^\RR f\right)(r) dr.
 \end{align*}
 Equation~\eqref{eq:prfPropRepCIntBodyNegPDarstC} therefore implies
 \begin{align*}
  \int_{\CC} \rho_{C^\circ}(z)^{-p} \left(\mathcal{R}_u^\CC f\right)(z) dz = c_p \int_{\unitsurf^{1}} \int_0^\infty r^{p} \left(\mathcal{R}_{cu}^\RR f\right)(r) dr \, d\nu_{C,p}(c),
 \end{align*}
 for every even $f \in C^\infty(\CC^n)$ with compact support. As $\rho_{C^\circ}$ and $\nu_{C,p}$ are even, this equation clearly also holds for functions $f$ that are not necessarily even. Moreover, by approximation, it holds for $f = \mathbbm{1}_K$, $K \in \starbodiesO(\CC^n)$, where $\mathcal{R}_u^\CC f = \CompParSec{K}{u}$ and $\mathcal{R}_{cu}^\RR f = \RealParSec{K}{cu}$ are the complex and real parallel section functions (see \eqref{eq:defCompParSec} and the comment below it). Consequently, for $u \in \unitsurf^{2n-1}$,
 \begin{align*}
  \rho_{\IntBody_{C,p} K}(u)^{-p} = \int_{\CC} \rho_{C^\circ}(z)^{-p} \CompParSec{K}{u}(z) dz = c_p \int_{\unitsurf^{1}} \int_0^\infty r^{p} \RealParSec{K}{cu}(r) dr \, d\nu_{C,p}(c),
 \end{align*}
 and, since $\nu_{C,p}$ is even, the right-hand side is equal to
 \begin{align*}
  \frac{c_p}{2} \int_{\unitsurf^{1}} \int_\RR |r|^{p} \RealParSec{K}{cu}(r) dr \, d\nu_{C,p}(c) = \frac{c_p}{2} \int_{\unitsurf^{1}} \rho_{\IntBody_p K}(cu)^{-p} \, d\nu_{C,p}(c),
 \end{align*}
 which yields the claim with $\mu_{C,p} = \frac{c_p}{2} \nu_{C,p}$.

 For $p=-1$, finally, we first show that \eqref{eq:prfPropRepCIntBodyNegPDarstC} holds for $d\nu_{C,-1} = \frac{1}{2\pi}\rho_{iC^\circ}(c)dc$. Indeed, let $\phi$ be an even Schwartz function on $\CC$. Using polar coordinates, the homogeneity of radial functions and the parity of $\phi$,
 \begin{align*}
  \int_\CC \rho_{C^\circ}(x) \phi(x) dx = \int_{\unitsurf^1}\frac{1}{2}\rho_{C^\circ}(c)  \int_\RR \phi(rc) dr dc,
 \end{align*}
 where the inner integral equals $\widehat{\phi(\cdot c)}(0)$. By \cite{Koldobsky2005}*{Lem.~2.11}, $\widehat{\mathcal{R}_c^\RR \hat\phi}(s) = (2\pi)^2 \phi(sc)$ for $s \in \RR$, that is,
 \begin{align*}
  \int_\RR \phi(rc) dr = \widehat{\phi(\cdot c)}(0) = \frac{1}{2\pi} \mathcal{R}_c^\RR \hat\phi(0) = \frac{1}{2\pi} \int_{\langle x, c\rangle = 0} \hat \phi(x) dx = \frac{1}{2\pi}  \int_\RR \hat\phi(itc) dt.
 \end{align*}
 Changing the outer integration and by $\rho_{C^\circ}(-ic) = \rho_{iC^\circ}(c)$ and the parity of $\hat \phi$, we arrive at \eqref{eq:prfPropRepCIntBodyNegPDarstC} for $p=-1$ and $d\nu_{C,-1} = \frac{1}{2\pi} \rho_{iC^\circ}(c) dc$.

 Next, we repeat the steps from the previous part ($-1<p<0$) to obtain
 \begin{align*}
  \int_{\unitsurf^{1}} \left( \int_0^\infty t^{-p-1} \widehat{\mathcal{R}_u^\CC f}(tc) dt \right) d\nu_{C,-1}(c) = c_p \int_{\unitsurf^{1}} \int_0^\infty r^p \left(\mathcal{R}_{cu}^\RR f\right)(r) dr \, d\nu_{C,-1}(c)
 \end{align*}
 for every even $f \in C^\infty(\CC^n)$ with compact support. Recalling the convergence~\eqref{eq:defDistrtqplusConvMin1} of the family of distributions $r^p_+$ from~\eqref{eq:defDistrtqplus} as $p\to -1^+$, we deduce by dominated convergence (as $\widehat{\mathcal{R}_u^\CC f}$ is a Schwartz function and $\mathcal{R}_{cu}^\RR f$ has support uniformly bounded in $c$) that
 \begin{align}\label{eq:prfPropRepCIntBodyPMin1AfterLim}
  \int_{\unitsurf^{1}} \left( \int_0^\infty \widehat{\mathcal{R}_u^\CC f}(tc) dt \right) d\nu_{C,-1}(c) = \pi \int_{\unitsurf^{1}} \left(\mathcal{R}_{cu}^\RR f\right)(0) \, d\nu_{C,-1}(c),
 \end{align}
 where we also used that $c_p \Gamma(p+1) \to \pi$. Combining \eqref{eq:prfPropRepCIntBodyPMin1AfterLim} with \eqref{eq:prfPropRepCIntBodyNegPDarstC},
 \begin{align*}
  \int_\CC \rho_{C^\circ}(x) \left(\mathcal{R}_u^\CC f\right)(x) dx = \pi \int_{\unitsurf^{1}} \left(\mathcal{R}_{cu}^\RR f\right)(0) \, d\nu_{C,-1}(c),
 \end{align*}
 which, since $\nu_{C,-1}$ is even and by approximating $\mathbbm{1}_K$ by smooth functions $f$, implies
 \begin{align*}
  \rho_{\IntBody_{C,-1}K}(u) = \int_\CC \rho_{C^\circ}(z) \CompParSec{K}{u}(z) dz = \pi \int_{\unitsurf^{1}} \!\! \RealParSec{K}{cu}(0) \, d\nu_{C,-1}(c) = \frac{1}{2}\int_{\unitsurf^1}\!\! \rho_{\IntBody K}(cu) \rho_{iC^\circ}(c) dc
 \end{align*}
 for every $K \in \starbodiesO(\CC^n)$ and $u \in \unitsurf^{2n-1}$, yielding the claim.
\end{proof}

Note that we needed to consider the case $p=-1$ separately in the proof as we can not apply \eqref{eq:prfLpDarstFourier} for $p=-1$.

%
%
%
%

\bigskip

\noindent The proof Theorem~\ref{prop:compLp=realLp} is now a direct consequence of Proposition~\ref{prop:repCIntBodyByLpIntBody}.
\begin{proof}[Proof of Theorem~\ref{prop:compLp=realLp}]
 Let $p>-1$ and $C \in \convexbodiesO(\CC)$. If $K \in \starbodiesO(\CC^n)$ is $\unitsurf^1$-invariant, then $\IntBody_p K$ is $\unitsurf^1$-invariant as well, by $\SL(2n,\RR)$-contravariance. Consequently, $\rho_{\IntBody_p K}(cu) = \rho_{\IntBody_p K}(u)$ for all $u \in \unitsurf^{2n-1}$ and $c \in \unitsurf^1$, and by Proposition~\ref{prop:repCIntBodyByLpIntBody},
 \begin{align*}
  \rho_{\IntBody_{C,p} K}(u)^{-p} = \int_{\unitsurf^1} \rho_{\IntBody_p K}(cu)^{-p} d\mu_{C,p}(c) = \mu_{C,p}(\unitsurf^1)\rho_{\IntBody_p K}(u)^{-p}, \quad u \in \unitsurf^{2n-1},
 \end{align*}
 which yields the claim for $d_{C,p} = \mu_{C,p}(\unitsurf^1)^{-1/p}$ (noting that $\mu_{C,p}(\unitsurf^1) > 0$ as it is equal to the radius of $\IntBody_{C,p} B$ for some suitably chosen ball $B \subseteq \CC^n$). 
\end{proof}

\bigskip

\noindent
We continue by proving Theorem~\ref{mthm:ineqComplexLpVsLp}, which we can now state with the (technical) equality conditions, depending on the measure $\mu_{C,p}$ from Proposition~\ref{prop:repCIntBodyByLpIntBody}.

\begin{theorem}\label{thm:ineqComplexLpVsLp}
	Suppose that $C \in \convexbodiesO(\CC)$ is origin-symmetric and $-1\leq p< 1$ is non-zero. If $K \in \starbodiesO(\CC^n)$, then
	\begin{align}\label{eq:thmIneqComplLpVsLpClaim}
		\frac{\Vol{2n}{\IntBody_{C,p}K}}{\Vol{2n}{\IntBody_{C,p}B^{2n}}} \leq \frac{\Vol{2n}{\IntBody_p K}}{\Vol{2n}{\IntBody_{p}B^{2n}}}.
	\end{align}
	If $\mu_{C,p}$ has infinite support equality holds if and only if $\IntBody_p K$ is $\unitsurf^1$-invariant, and for all other $\mu_{C,p}$ equality holds if and only if $ \IntBody_p K = c_1 \overline{c_2} \IntBody_p K$ whenever $c_1,c_2 \in \supp \mu_{C,p}$.
\end{theorem}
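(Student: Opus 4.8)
The plan is to obtain \eqref{eq:thmIneqComplLpVsLpClaim} from the representation \eqref{eq:proprepCIntBodyByLpIntBody} in Proposition~\ref{prop:repCIntBodyByLpIntBody} by a single application of Jensen's inequality. Writing volumes in polar coordinates, $\Vol{2n}{L}=\frac{1}{2n}\int_{\unitsurf^{2n-1}}\rho_L(u)^{2n}du$, I would first note that, since $B^{2n}$ is $\unitsurf^1$-invariant, $\IntBody_p B^{2n}$ is $\unitsurf^1$-invariant (in fact a ball) and hence has constant radial function; by \eqref{eq:proprepCIntBodyByLpIntBody} the same is true of $\IntBody_{C,p}B^{2n}$, with $\rho_{\IntBody_{C,p}B^{2n}}^{-p}=\mu_{C,p}(\unitsurf^1)\,\rho_{\IntBody_p B^{2n}}^{-p}$. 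Consequently $\Vol{2n}{\IntBody_{C,p}B^{2n}}/\Vol{2n}{\IntBody_p B^{2n}}=\mu_{C,p}(\unitsurf^1)^{-2n/p}$, so that, writing $\tilde\mu_{C,p}=\mu_{C,p}/\mu_{C,p}(\unitsurf^1)$ for the associated probability measure on $\unitsurf^1$, \eqref{eq:thmIneqComplLpVsLpClaim} becomes equivalent to
\begin{align*}
 \int_{\unitsurf^{2n-1}}\left(\int_{\unitsurf^1}\rho_{\IntBody_p K}(cu)^{-p}\,d\tilde\mu_{C,p}(c)\right)^{-2n/p}du\leq\int_{\unitsurf^{2n-1}}\rho_{\IntBody_p K}(u)^{2n}\,du.
\end{align*}

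The key observation is that $t\mapsto t^{-2n/p}$ is strictly convex on $(0,\infty)$ in the whole relevant range: for $0<p<1$ the exponent $-2n/p$ is negative, while for $-1\leq p<0$ it equals $2n/|p|\geq 2n\geq 2$, and a power $t^\beta$ with $\beta<0$ or $\beta>1$ is strictly convex. Applying Jensen's inequality to this function and the probability measure $\tilde\mu_{C,p}$ gives, for every fixed $u\in\unitsurf^{2n-1}$,
\begin{align*}
 \left(\int_{\unitsurf^1}\rho_{\IntBody_p K}(cu)^{-p}\,d\tilde\mu_{C,p}(c)\right)^{-2n/p}\leq\int_{\unitsurf^1}\rho_{\IntBody_p K}(cu)^{2n}\,d\tilde\mu_{C,p}(c),
\end{align*}
and integrating over $u$, interchanging the order of integration, and using that each $c\in\unitsurf^1\subseteq\mathrm{U}(n)$ acts on $\unitsurf^{2n-1}$ as a rotation preserving the spherical Lebesgue measure, the right-hand side turns into $\int_{\unitsurf^{2n-1}}\rho_{\IntBody_p K}(u)^{2n}du$, which is the displayed inequality.

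For the equality discussion I would invoke the equality case of Jensen: since $t\mapsto t^{-2n/p}$ is \emph{strictly} convex, equality holds in \eqref{eq:thmIneqComplLpVsLpClaim} if and only if, for Lebesgue-almost every $u\in\unitsurf^{2n-1}$, the map $c\mapsto\rho_{\IntBody_p K}(cu)$ is $\mu_{C,p}$-almost everywhere constant. As $\rho_{\IntBody_p K}$ is continuous and positive, this is equivalent to $\rho_{\IntBody_p K}(c_1u)=\rho_{\IntBody_p K}(c_2u)$ for \emph{all} $u\in\unitsurf^{2n-1}$ and all $c_1,c_2\in\supp\mu_{C,p}$, which in turn (replacing $u$ by $\overline{c_2}u=c_2^{-1}u$ and using $\unitsurf^1$-homogeneity of radial functions) is equivalent to $\IntBody_p K=c_1\overline{c_2}\IntBody_p K$ for all $c_1,c_2\in\supp\mu_{C,p}$. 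This is exactly the asserted condition when $\supp\mu_{C,p}$ is finite; and if $\supp\mu_{C,p}$ is infinite, then $\{c_1\overline{c_2}:c_1,c_2\in\supp\mu_{C,p}\}$ is an infinite subset of the closed subgroup $\{c\in\unitsurf^1:c\IntBody_p K=\IntBody_p K\}$ of $\unitsurf^1$, forcing this subgroup to be all of $\unitsurf^1$, i.e.\ $\IntBody_p K$ $\unitsurf^1$-invariant (the converse being trivial). There is no substantial obstacle here; the only points requiring care are the convexity/sign check for $t^{-2n/p}$ in the two parameter ranges, the use of continuity of $\rho_{\IntBody_p K}$ to pass from ``$\mu_{C,p}$-a.e.'' to ``on all of $\supp\mu_{C,p}$'', and the elementary fact that an infinite closed subgroup of $\unitsurf^1$ equals $\unitsurf^1$.
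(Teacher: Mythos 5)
Your proof is correct and follows essentially the same route as the paper: the representation from Proposition~\ref{prop:repCIntBodyByLpIntBody}, Jensen's inequality applied to $t\mapsto t^{-2n/p}$ over $\mu_{C,p}$, Fubini plus rotation invariance, and then the strict-convexity equality case (the paper absorbs the normalization of $\mu_{C,p}$ into constants rather than passing to a probability measure, and argues the infinite-support case via $\varepsilon$-density of powers of $\overline{c_1}c_2$ rather than by quoting the classification of closed subgroups of $\unitsurf^1$, but these are cosmetic differences).
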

\begin{proof}
By Proposition~\ref{prop:repCIntBodyByLpIntBody},
\begin{align*}
 \Vol{2n}{\IntBody_{C,p}K} = \frac{1}{2n} \int_{\unitsurf^{2n-1}} \left(\int_{\unitsurf^1} \rho_{\IntBody_p K}(cu)^{-p} d\mu_{C,p}(c) \right)^{-2n/p} du.
\end{align*}
We apply Jensen's inequality to the inner integral to obtain
\begin{align}\label{eq:prfThmIneqComplexLpVsLpJensen}
 \Vol{2n}{\IntBody_{C,p}K} \leq \frac{\mu_{C,p}(\unitsurf^1)^{-1-2n/p}}{2n} \int_{\unitsurf^{2n-1}} \int_{\unitsurf^1} \rho_{\IntBody_p K}(cu)^{2n} d\mu_{C,p}(c) du.
\end{align}
Changing the order of integration using Fubini's theorem,
\begin{align*}
 \Vol{2n}{\IntBody_{C,p}K} \leq \mu_{C,p}(\unitsurf^1)^{-1-2n/p} \int_{\unitsurf^1} \Vol{2n}{\overline{c}\IntBody_p K} d\mu_{C,p}(c) = \mu_{C,p}(\unitsurf^1)^{-2n/p}\Vol{2n}{\IntBody_p K},
\end{align*}
we arrive at the desired inequality, since $\rho_{\IntBody_{C,p} B^{2n}} = \mu_{C,p}(\unitsurf^1)^{-1/p} \rho_{\IntBody_p B^{2n}}$ by \eqref{eq:proprepCIntBodyByLpIntBody}.

\medskip

Equality holds in \eqref{eq:thmIneqComplLpVsLpClaim} if and only if it holds in \eqref{eq:prfThmIneqComplexLpVsLpJensen}, that is, by the equality conditions of Jensen's inequality, exactly if for almost every $u \in \unitsurf^{2n-1}$ there exists $d_u \in \RR$ such that
\begin{align*}
 \rho_{\IntBody_p K}(cu)^{-p} = d_u, \quad \text{ for $\mu_{C,p}$-a.e. } c \in \unitsurf^1.
\end{align*}
Note that, by the continuity of $\rho_{\IntBody_p K}$, this holds indeed for \emph{all} $u \in \unitsurf^{2n-1}$. Consequently, $\overline{c_1} \IntBody_p K = \overline{c_2} \IntBody_p K$ for $\mu_{C,p}$-almost all $c_1,c_2 \in \unitsurf^1$, that is, again by continuity,
\begin{align}\label{eq:prfThmIneqComplexLpVsLpJensenEqCas}
 \IntBody_p K = c_1 \overline{c_2} \IntBody_p K, \quad c_1, c_2 \in \supp \mu_{C,p},
\end{align}
which yields the equality condition for $\mu_{C,p}$ with finite support. If $\supp \mu_{C,p}$ is infinite, then, by compactness of $\unitsurf^1$, for every $\varepsilon > 0$, there exist $c_1, c_2 \in \supp \mu_{C,p}$ such that $|c_1\overline{c_2} - 1| < \varepsilon$. Iterating \eqref{eq:prfThmIneqComplexLpVsLpJensenEqCas}, we obtain that the map $c \mapsto \rho_{\IntBody_p K}(cu)$ is constant on the set $\{(\overline{c_1}c_2)^k: k \in \NN\}\subseteq \unitsurf^1$, which is $\varepsilon$-close to every $c \in \unitsurf^1$. Since $\varepsilon>0$ was arbitrary and $\rho_{\IntBody_p K}$ is continuous, we conclude that $\IntBody_p K$ must be $\unitsurf^1$-invariant. The converse follows easily from \eqref{eq:proprepCIntBodyByLpIntBody}.
\end{proof}

\bigskip

\begin{proof}[Proof of Corollary~\ref{mcor:CompIntIneq}]
 Suppose that $C \in \convexbodiesO(\CC)$ is origin-symmetric and let $K \in \starbodiesO(\CC^n)$. Then, by inequality~\eqref{eq:thmIneqComplLpVsLpClaim} and Theorem~\ref{thm:lpIntIneqAdamczakEtAl} (resp. Busemann's intersection inequality~\eqref{eq:BusIntersectIneq} for $p=-1$) it follows that
 \begin{align}\label{eq:prfThmBIneq}
  \frac{\Vol{2n}{\IntBody_{C,p}K}}{\Vol{2n}{\IntBody_{C,p}B^{2n}}} \leq \frac{\Vol{2n}{\IntBody_p K}}{\Vol{2n}{\IntBody_p B^{2n}}} \leq \frac{\Vol{2n}{K}^{2n+p}}{\Vol{2n}{B^{2n}}^{2n+p}}.
 \end{align}
 Equality holds for $p=-1$ and $K \in \starbodiesO(\CC^n)$ if and only if there is equality in \eqref{eq:thmIneqComplLpVsLpClaim} and \eqref{eq:BusIntersectIneq}. Since $d\mu_{C,-1} = \frac{1}{2}\rho_{iC^\circ}(c) dc$ has infinite support (equal to $\unitsurf^1$), the equality cases of Theorem~\ref{thm:ineqComplexLpVsLp} imply that $\IntBody K$ must be $\unitsurf^1$-invariant, whereas the equality cases of \eqref{eq:BusIntersectIneq} imply that $K$ must be an origin-symmetric ellipsoid. Consequently, as the intersection body map is injective on origin-symmetric star bodies (see, e.g., \cite{Gardner2006}*{Thm.~8.1.3}), we conclude that $K$ is an $\unitsurf^1$-invariant ellipsoid, which is equivalent to $K$ being an origin-symmetric Hermitian ellipsoid.
 
 If, on the other hand $K$ is an origin-symmetric Hermitian ellipsoid, then there clearly is equality in \eqref{eq:prfThmBIneq}.
\end{proof}

\bigskip

\subsection*{Acknowledgements}
The authors would like to thank Boris Rubin, Franz Schuster and Thomas Wannerer for their valuable comments and suggestions.

\bibliographystyle{abbrv}
\bibliography{ComplLpIntersBodies}

\end{document}